\newcommand{\red}{\color{darkred}}
\newcommand{\blue}{\color{darkblue}}
\newcommand{\green}{\color{darkgreen}}
\newcommand{\clt}{central limit theorem}
\newcommand{\ex}{{\rm e}\,}
\newcommand{\asy}{asymptotic}
\newcommand{\ts}{time series}
\definecolor{darkblue}{rgb}{.1, 0.1,.8}
\definecolor{darkgreen}{rgb}{0,0.8,0.2}
\definecolor{darkred}{rgb}{.8, .1,.1}
\newtheorem{lemma}{Lemma}[section]
\newtheorem{theorem}[lemma]{Theorem}
\newtheorem{proposition}[lemma]{Proposition}
\newtheorem{definition}[lemma]{Definition}
\newtheorem{corollary}[lemma]{Corollary}
\newtheorem{example}[lemma]{Example}
\newtheorem{exercise}[lemma]{Exercise}
\newtheorem{remark}[lemma]{Remark}
\newtheorem{fig}[lemma]{Figure}
\newtheorem{tab}[lemma]{Table}
\newcommand{\bfQ}{{\bf Q}}
\newcommand{\bfu}{{\bf u}}
\newcommand{\bth}{\begin{theorem}}
\newcommand{\ethe}{\end{theorem}}
\newcommand{\bre}{\begin{remark}\em }
\newcommand{\ere}{\end{remark}}
\newcommand{\ble}{\begin{lemma}}
\newcommand{\ele}{\end{lemma}}
\newcommand{\bde}{\begin{definition}}
\newcommand{\ede}{\end{definition}}
\newcommand{\bco}{\begin{corollary}}
\newcommand{\eco}{\end{corollary}}
\newcommand{\bpr}{\begin{proposition}}
\newcommand{\epr}{\end{proposition}}
\newcommand{\bexer}{\begin{exercise}}
\newcommand{\eexer}{\end{exercise}}
\newcommand{\bexam}{\begin{example}}
\newcommand{\eexam}{\end{example}}
\newcommand{\bfi}{\begin{fig}}
\newcommand{\efi}{\end{fig}}
\newcommand{\btab}{\begin{tab}}
\newcommand{\etab}{\end{tab}}
\newcommand{\lhs}{left-hand side}
\newcommand{\fidi}{finite-dimensional distribution}
\newcommand{\rv}{random variable}
\newcommand{\sign}{{\rm sign}}
\newcommand{\var}{{\rm var}}
\newcommand{\corr}{{\rm corr}}
\newcommand{\as}{{\rm a.s.}}
\newcommand{\bfTh}{\mbox{\boldmath$\Theta$}}
\newcommand{\bfxi}{\mbox{\boldmath$\xi$}}
\newcommand{\rhs}{right-hand side}
\newcommand{\beao}{\begin{eqnarray*}}
\newcommand{\eeao}{\end{eqnarray*}\noindent}
\newcommand{\beam}{\begin{eqnarray}}
\newcommand{\eeam}{\end{eqnarray}\noindent}
\newcommand{\beqq}{\begin{equation}}
\newcommand{\eeqq}{\end{equation}\noindent}
\newcommand{\bce}{\begin{center}}
\newcommand{\ece}{\end{center}}
\newcommand{\bfzero}{{\bf 0}}
\newcommand{\barr}{\begin{array}}
\newcommand{\earr}{\end{array}}
\newcommand{\stp}{\stackrel{\P}{\longrightarrow}}
\newcommand{\std}{\stackrel{d}{\longrightarrow}}
\newcommand{\stas}{\stackrel{\rm a.s.}{\longrightarrow}}
\newcommand{\stw}{\stackrel{w}{\longrightarrow}}
\newcommand{\eqd}{\stackrel{d}{=}}
\newcommand{\vague}{\stackrel{\lower0.2ex\hbox{$\scriptscriptstyle
                    \it{v} $}}{\rightarrow}}
\newcommand{\weak}{\stackrel{\lower0.2ex\hbox{$\scriptscriptstyle
                    \it{w} $}}{\rightarrow}}
\newcommand{\what}{\stackrel{\lower0.2ex\hbox{$\scriptscriptstyle
                    \it{\hat{w}} $}}{\rightarrow}}
\newcommand{\bdis}{\begin{displaymath}}
\newcommand{\edis}{\end{displaymath}\noindent}
\renewcommand{\P}{\mathbb P}
\newcommand{\R}{\mathbb{R}}
\newcommand{\bfX}{{\mathbf{X}}}
\newcommand{\nto}{n\to\infty}
\newcommand{\kto}{k\to\infty}
\newcommand{\xto}{x\to\infty}
\newcommand{\ov}{\overline}
\newcommand{\wt}{\widetilde}
\newcommand{\vep}{\varepsilon}
\newcommand{\regvary}{regularly varying}
\newcommand{\slvary}{slowly varying}
\newcommand{\regvar}{regular variation}
\newcommand{\bbr}{{\mathbb R}}
\newcommand{\bbz}{{\mathbb Z}}
\newcommand{\con}{convergence}
\newcommand{\st}{such that}
\newcommand{\fif}{if and only if}
\newcommand{\wrt}{with respect to}
\newcommand{\chf}{characteristic function}
\newcommand{\fct}{function}
\newcommand{\ds}{distribution}
\newcommand{\rep}{representation}
\newcommand{\seq}{sequence}
\newcommand{\pro}{probabilit}
\newcommand{\ms}{measure}
\newcommand{\ld}{large deviation}
\newcommand{\bfx}{{\bf x}}
\newcommand{\bfB}{{\bf B}}
\newcommand{\bfY}{{\bf Y}}
\newcommand{\bfy}{{\bf y}}
\newcommand{\bfz}{{\bf z}}
\newcommand{\bfS}{{\bf S}}
\newcommand{\bfa}{{\bf a}}
\newcommand{\bfb}{{\bf b}}
\def\1{\ensuremath{\mathrm{1}\hspace{-.35em} \mathrm{1}}} 
\def\E{{\mathbb E}}
\def\P{{\mathbb{P}}}
\def\R{\mathbb{R}}
\def\Z{\mathbb{Z}}
\def\a{{\alpha}}
\def\corr{\mathop{\rm corr}\nolimits}
\newcommand{\conditionsmallsets}[1]{\mathbf{CS}_{#1}}
\newcommand{\conditionMX}[1]{\mathbf{MX}_{#1}}
\newcommand{\conditionsmallsetsp}[1]{\mathbf{CS}_{#1}}
\newcommand{\anticlust}{\mathbf{AC}}
\newcommand{\rvalpha}{\mathbf{RV}_\alpha}
\begin{document}
\today
\bibliographystyle{plain}
\title[Large deviation principles of $\ell^p$--blocks]{Large deviations of $\ell^p$--blocks of regularly varying time series and applications to cluster inference
}

\thanks{Thomas Mikosch's research is partially supported by Danmarks Frie Forskningsfond Grant No 9040-00086B. Olivier Wintenberger would like to thank Riccardo Passeggeri for useful discussions on the topic. Gloria Buritic\'a and Olivier Wintenberger would like to acknowledge the support of the French Agence Nationale de la Recherche (ANR) under reference ANR20-CE40-0025-01 (T-REX project).}

\author[G. Buritic\'a]{Gloria Buritic\'a}
\address{LPSM, Sorbonne Universit\'e\\
UPMC Universit\'e Paris 06\\ 
F-75005, Paris\\ France}
\email{gloria.buritica@upmc.fr}

\author[T. Mikosch]{Thomas Mikosch}
\address{Department  of Mathematics\\
University of Copenhagen\\
Universitetspar\-ken 5\\
DK-2100 Copenhagen\\
Denmark}
\email{mikosch@math.ku.dk}

\author[O. Wintenberger]{Olivier Wintenberger}
\address{LPSM, Sorbonne Universit\'es\\
UPMC Universit\'e Paris 06\\ 
F-75005, Paris\\ France}
\email{olivier.wintenberger@upmc.fr}

\begin{abstract}
In the regularly varying time series setting, a cluster of exceedances is a short period for which the supremum norm exceeds a high threshold. We propose to study a generalization of this notion considering short periods, or blocks, with $\ell^p-$norm above a high threshold. {Our main result derives new large deviation principles of extremal $\ell^p-$blocks, which guide us to define and characterize spectral cluster processes in $\ell^p$. We then study cluster inference in $\ell^p$ to motivate our results. We design consistent disjoint blocks estimators to infer features of cluster processes. Our estimators promote the use of large empirical quantiles from the $\ell^{p}-$norm of blocks as threshold levels which eases implementation and also facilitates comparison for different $p>0$. 
Our approach highlights the advantages of cluster inference based on extremal $\ell^\alpha$--blocks, where $\alpha > 0$ is the index of regular variation of the series. 
 We focus on inferring important indices in extreme value theory, e.g., the {\it extremal index}.
} 
\end{abstract}
\keywords{Regularly varying time series, large deviation principles, cluster processes, extremal index}
\subjclass{Primary 60G70 Secondary 60F10 62G32 60F05 60G57}
\maketitle
\section{Introduction}\setcounter{equation}{0}
{
For various applications of extreme value statistics with stationary time series, it is natural to wonder how a recorded high level can affect the future behavior of the sequence or how time dependencies perturb inferential methodologies. For example, for high quantile marginal estimation it is well known that 
the inference procedures tailored for independent observations are disturbed by temporal dependencies and must be corrected to produce accurate estimates; cf. Leadbetter \cite{leadbetter:1983}, Embrechts {\it et al.} \cite{embrechts:kluppelberg:mikosch:2013}. We aim to model extremal time dependencies in the setting of $\bbr^d$-valued 
 stationary regularly varying time series $(\bfX_t)_{t\in\bbz}$ with generic element $\bfX_0$; 
see Section~\ref{subsec:regvar} for a definition, cf.
Basrak and Segers \cite{basrak:segers:2009}. 
In this framework, an exceedance of a high threshold by the norm $|\bfX_t|$ at time $t$  might trigger consecutive exceedances in some small time interval around $t$.
These short periods with at least one exceedance were introduced implicitly in the seminal paper by Davis and Hsing \cite{davis:hsing:1995}. We refer to them as \textit{clusters (of exceedances)}. They were further reviewed in Basrak and Segers \cite{basrak:segers:2009} and Basrak {\it et al.} \cite{basrak:planinic:soulier:2018}. 
\par 
{ 
The main motivation for studying {clusters (of exceedances)} can be traced back to Theorem 2.5 in Davis and Hsing \cite{davis:hsing:1995}. 
For weakly dependent regularly varying time series, the point process with atoms at
$a_n^{-1}\bfX_{[0,n]} =a_n^{-1}(\bfX_0,\ldots,\bfX_n)$ admits a limit distribution that can be characterized
{in terms of three features: the index of regular variation, the distribution of cluster ({of exceedances}),
and the {\it extremal index} of $(|\bfX_t|)$, denoted by $\theta_{|\bfX|}$, where $a_n$ are moderate threshold levels satisfying $n\P(|\bfX_0| > a_n)\to 1$ as $n \to  \infty$. 
In this {setting, clusters are modeled as rare events of $\bfX_{[0,n]}$}  when its supremum norm exceeds the high level $x_n$ such that $\P(\|\bfX_{[0,n]}\|_\infty > x_n) \sim \theta_{|\bfX|}n \, \P(|\bfX_0| > x_n) \to 0$. 
From  the last relation we also see that $\theta_{|\bfX|}$ arises when comparing 
the extremal behavior of blocks of maxima 
in $(\bfX_t)$ with the corresponding behavior of the blocks in  an iid \seq\ $(\bfX_t^\prime)$
with the same marginal \ds . In particular $\theta_{|\bfX|}$ describes how the blocks of maxima reach high levels compared to the iid setting.
\par 
 In view of the previous discussion a cluster (of exceedances) is tied together with the extremal index by the supremum norm. Our main theoretical result extends the aforementioned ideas from the $\ell^\infty$--norm
to $\ell^p$--norms for $p < \infty$. In Theorem~\ref{thm:main:theorem} we investigate the behavior of $\bfX_{[0,n]}$ when its $\ell^p$--norm {exceeds}  high levels $(x_n)$ 
{satisfying} $\P(\|\bfX_{[0,n]}\|_p > x_n) = \P(\sum_{t=1}^n|{\bfX_t}|^p > x_n^p) \to 0$ as $n \to  \infty$. We call this a large deviation result since it describes the probability that the partial sums $\|\bfX_{[0,n]}\|_p^p$ 
exceed the extreme threshold $x_n^p$. This leads us to a new definition of a \textit{cluster process} in the space $ \ell^p=\ell^p(\mathbb{R}^d)$
and, {in the limiting case $p=\infty$,} one recovers the classical clusters ({of exceedances}).
Similarly, large deviation principles for sums were considered by Nagaev \cite{nagaev:1979},
Cline and Hsing \cite{cline:hsing:1998} in the independent heavy-tailed case, and by Mikosch and Wintenberger \cite{mikosch:wintenberger:2013, mikosch:wintenberger:2014, mikosch:wintenberger:2016}, Mikosch and Rodionov \cite{mikosch:rodionov:2021} in the dependent heavy-tailed case. We extend large deviation principles to $\ell^p$--norms $\|\bfX_{[0,n]}\|_p$, and extremal $\ell^p$--blocks, i.e., blocks $\bfX_{[0,n]}$ with large $\ell^p$--norm.
\par 
{We apply our findings to cluster inference. For this purpose we divide the}  sample $\bfX_1,\dots, \bfX_n$
into disjoint blocks  $({\bf B}_t)_{1 \le t \le \lfloor n/b_n \rfloor }$, ${\bf B}_t := \bfX_{(t-1)\,b_n+[1,b_n]}$, for a sequence of block lengths $(b_n)$ such that $b_n\to\infty$ and $b_n/n\to 0$. 
Then we select blocks whose $\ell^p$--norms
exceed a high threshold $x_{b_n}$. Our goal is to infer features of the cluster processes from these extremal $\ell^p$--blocks. 
{
In Theorem~\ref{cor:consistency}}
we design consistent  disjoint blocks methods with thresholds chosen as order statistics of $\ell^p$--norms. 
Hereby
we must choose 
a number $k_n = k_n(p)$ of blocks with large $\ell^p$--norm such that $n/(b_nk_n) \to \infty$, $k_n \to \infty$. The sequence $(k_n)$ appeals to the classical bias-variance trade-off in extreme value statistics;  see for example Resnick \cite{resnick:2007}. { When choosing a small number of blocks $k_n$ for inference the variance of the estimates  increases while a large number $k_n$ leads to strong bias. This calls for a rigorous definition of 
extremal $\ell^p$--blocks  with the goal of revealing how $p$ plays a key role for tuning the sequence $k_n = k_n(p)$. 
{Moreover, we can derive the same quantity by using extremal $\ell^p$--blocks for different values of $p$ if we apply a change-of-norm technique.} Our large deviations result allows us then to compare the different estimators through the tuning parameter $k_n(p)$. 
The key argument of our analysis is
the relationship we stress between the sequence $k_n = k_n(p)$
and the large deviations of $\ell^p$--norms.}
\par
One advantage of using empirical $\ell^p$--norm thresholds 
is that they adapt to the block lengths $(b_n)$ and
take into account
the value of $p$. 
In the existing literature for $p=\infty$
no detailed advice is given {as to} how $(b_n)$ and $(x_{b_n})$ must be chosen; see for example Drees, Rootz\'en \cite{drees:rootzen:2010}, Drees, Neblung \cite{drees:neblung:2021}, Cissokho, Kulik \cite{cissokho:kulik:2021}, Drees 
{\it et al.} \cite{drees:janssen:neblung:2021}, who assume
growth conditions on the sequence $(x_n)$ 
such as $n \P(|\bfX_0| > x_n) \to 0$ as $n\to \infty$. It is common practice to replace $x_{b_n}$ by an upper 
order statistic of $(|\bfX_t|)_{1\le t\le n}$; see  
for example 
the blocks estimator of the {\em extremal index} proposed by Hsing \cite{hsing:1993}. In our setting, the order statistics of the $\ell^p$--norms adapt naturally to different values of $p$. 
Asymptotic normality of our estimators could be derived by combining arguments from 
Theorem 4.3 in  Cissokho and Kulik \cite{cissokho:kulik:2021} and the large deviation arguments developed below; this topic is the subject of ongoing work and will not be presented here.}
\par 
\par
The case when $p$ and the index $\a$ of regular variation of $(\bfX_t)$ coincide is rather specific. 
The relation $\P(\|\bfX_{[0,n]}\|_\alpha > x_n) \sim  n \, \P(|\bfX_0| > x_n) \to 0$ indicates that serial dependence does not affect large deviations of the $\ell^\alpha$--norm. { From this relation we see that the $\ell^\alpha$--norm of the series reaches high levels at the same rate as in the iid case. 
Consequently, when $p$ coincides with the index $\alpha$, the temporal dependencies of the sequence do not perturb the number $k_n=k_n(\alpha)$ of extremal $\ell^\alpha$--blocks we can consider for inference. In practice, this fact might ease tuning the parameter $k_n$.
  Hereby we focus on inferring classical indices of serial dependence based on extremal $\ell^\alpha$--blocks.  
We apply our inference procedure to estimate the
extremal index using extremal $\ell^\alpha$--blocks. 
We also consider inference of {\em cluster indices} as defined by Mikosch and Wintenberger \cite{mikosch:wintenberger:2014} on partial sum functionals by considering extremal $\ell^\a$--blocks. Our
simulation study supports the fact that $\ell^\alpha$--cluster inference is robust regarding the number $k_n$ of extremal $\ell^\alpha$--blocks we can choose. }
\par 
The previous indices are based on functionals that are shift-invariant with respect to the backward shift in \seq\ spaces; see Kulik and Soulier \cite{kulik:soulier:2020} for details. We  extend cluster inference to functionals acting on $\ell^p$  by studying {$\alpha$th-power} sum functionals acting on $\ell^p$. The key argument for this extension is the random shift 
analysis of Janssen \cite{janssen:2019} expressed in terms of the $\alpha$th moment of the cluster process. A similar idea has been investigated in Drees {\it et al.} \cite{drees:segers:warchol:2015} and Davis {\it et al.} \cite{davis:drees:segers:warchol:2018} for inference of the tail process. Here we focus on cluster inference. %

\subsection{Outline of the paper.}\label{sec:outline}

In 
Section~\ref{sec:preliminaries},
after introducing preliminaries on regular variation,
 we present the main large deviation principle (Theorem~\ref{thm:main:theorem}).  In Section~\ref{sec:spectral:cluster:process} we study  $\ell^p$-valued
cluster processes which were introduced in Theorem~\ref{thm:main:theorem}. We apply this theorem  in
Section~\ref{sec:Applications} where we deal with  inference for shift-invariant functionals} acting on these cluster processes (see Theorem~\ref{cor:consistency}),  choosing thresholds as empirical quantiles of the $\ell^p$--norms of blocks. 
We continue with an
in-depth analysis of the assumptions of Theorem~\ref{thm:main:theorem}; see
Section~\ref{sec:large:deviation}.
In Section \ref{sec:beyond} we consider inference for 
non-shift-invariant functionals. We also illustrate our approach of $\ell^p$-based cluster inference for $p=\a$ 
and compare it with the case $p=\infty$; see Section~\ref{ex:AR}.
We defer all proofs to Section \ref{sec:proof}.

\subsection{Notation}\label{subsec:notation}
For integers $i$ and $a<b$ we write $i+[a,b]= \{i+a,\ldots,i+b\}$. 
It is convenient to embed the vectors
$\bfx_{[a,b]}\in\bbr^{d(b-a+1)}$ in $(\R^d)^{\bbz}$ by
assigning zeros to indices $i\not\in [a,b]$, and we then also
write $\bfx_{[a,b]}\in (\R^d)^\bbz$. We write $\bfx := (\bfx_t) = (\bfx_t)_{t \in \mathbb{Z}} $, and define truncation at level $\vep>0$
from above and below 
by $\underline\bfx_{\epsilon} = (\underline{\bfx_t}_\epsilon)_{t \in\bbz}$, 
$ \ov\bfx^\vep =(\ov{\bfx_t}^\epsilon )_{t\in\bbz}$,
where
$\underline{\bfx_t}_\epsilon=\bfx_t\,\1(|\bfx_t| > \epsilon)$, $\ov \bfx_t^\epsilon=\bfx_t \1(|\bfx_t| \le  \epsilon)$.

\par
We focus on the \seq\ space 
$\ell^p$, $p\in (0,\infty]$ equipped with the 
metric  
\beao
d_p(\bfx,\bfy) :=  \left\{\barr{ll}
\|\bfx - \bfy\|_p= \Big(\sum_{t\in\bbz}|\bfx_t -\bfy_t|^p\Big)^{1/p}\,,&p\in (1,\infty)\,,\\
\|\bfx-\bfy\|_p^p\,,&p\in (0,1)\,,\earr\right.\; \bfx,\bfy\in \ell^p\,,
\eeao
and the supremum distance in the case $p=\infty$. We know that $d_p$ makes $\ell^p$ a separable Banach space for $p \in( 1,\infty]$, and a separable complete metric space for $p\in (0,1)$.
Recall the {\em backshift operator} acting on $\bfx\in (\R^d)^\bbz$:
$B^k\bfx= (\bfx_{t-k})_{t\in\bbz}$, $k\in\bbz$. Then we define the shift-invariant space $\wt \ell^p = \ell^p/\sim$ as the quotient
space with respect to the equivalence relation $\sim$ in $\ell^p$: 
$\bfx \sim \bfy$ 
if there exists $k \in \mathbb{Z}$ such that $B^k\bfx = \bfy$. 
An element of $\wt \ell^p$ is denoted by $[\bfx] = 
\{ B^k\bfx : k \in \mathbb{Z}\}$. For ease of notation,  
we often write $\bfx$ instead of $[\bfx]$, and we notice that any 
element in $\ell^p$ can be embedded in $\wt \ell^p$ by using the
equivalence relation. We define for $[\bfx],[\bfy]\in\wt \ell^p$,
\beao
\wt d_p([\bfx],[\bfy])&:=& \inf_{k\in\bbz}\big\{d_p(B^k\bfa,\bfb): \bfa\in[\bfx]\,,\bfb\in [\bfy]\big\}\,.
\eeao
For $p\in(0,\infty]$, $\wt d_p$ is a metric on $\wt\ell^p$
and turns it into a complete metric space; see Basrak {\it et al.} 
\cite{basrak:planinic:soulier:2018}.
{

\section{Preliminaries and main result}\setcounter{equation}{0}\label{sec:preliminaries}
\subsection{About \regvar\ of time series}\label{subsec:regvar}
 We consider an $\bbr^d$-valued stationary process $(\bfX_t)$. Following Davis and Hsing \cite{davis:hsing:1995}, we call it 
\regvary\ if the \fidi s of the process are \regvary . This notion 
involves the vague \con\ of certain tail \ms s; see 
Resnick \cite{resnick:2007}. {Avoiding} the concept of
vague \con\ and infinite limit \ms  s, 
Basrak and Segers \cite{basrak:segers:2009} showed that
\regvar\ of $(\bfX_t)$ is equivalent to the weak \con\ relations:
for every $h\ge 0$,
\beam\label{eq:may27a}
\P\big(x^{-1}(\bfX_t)_{[-h,h]}\in \cdot \, \mid\, |\bfX_0|>x \big)\stw 
\P\big(Y\, (\bfTh_t)_{[-h,h]}\in \cdot\big)\,,\qquad \xto \,,\nonumber\\
\eeam
where $Y$ is Pareto$(\a)$-distributed, i.e., it has tail 
$\P(Y>y)=y^{-\a}$, $y>1$, independent of  the vector $(\bfTh_t)_{[-h,h]}$ in $(\mathbb{R}^d)^{2h +1}$
and $|\bfTh_0|=1$.
According to Kolmogorov's consistency theorem, one can extend 
the latter finite-dimensional vectors to a \seq\ $\bfTh=(\bfTh_t)_{t\in\bbz}$ 
in $(\bbr^d)^\bbz$ called 
the {\em spectral tail process} of $(\bfX_t)$.
\par 
Following Planini\'c and Soulier \cite{planinic:soulier:2018}, the spectral tail process $(\bfTh_{t})$ satisfies the {\em time-change formula:} for every measurable function  $f:({\ell}^p, {d}_p) \to \mathbb{R}$ such that $f(\lambda\, \bfx)=f(\bfx)$ for all $\lambda>0$, we have for all $t,s \in \mathbb{Z},$
\beam\label{eq:june5a}
\E[f(B^{s}(\bfTh_t))\1(\bfTh_{-s}\neq\bf0)]&=&\E\big[
|\bfTh_s|^\a\,f\big((\bfTh_t)\big)\big]\,. 
\eeam
The \regvar\ property of $(\bfX_t)$, denoted by  $\rvalpha$,
 is determined by the (tail)-index $\a>0$ and the spectral
tail process.
\par
Furthermore, Segers {\it et al.} \cite{segers:zhao:meinguet:2017} characterized \regvar\ of
random elements with values in star-shaped metric spaces. Their results
are based on weak \con\ in the spirit of \eqref{eq:may27a}. 
Our focus will be on a special star-shaped space: the \seq\ space $(\ell^p,d_p)$.
Using the {\em $p$-modulus \fct} $\|\cdot\|_p$, the $\ell^p-$valued stationary process $(\bfX_t)$ has the 
property $\rvalpha$ \fif\ relation \eqref{eq:may27a} holds with
$|\bfX_0|$ replaced by $\|\bfX_{[0,h]}\|_p$. 
Equivalently, (Proposition~3.1 in Segers {\it et al.} \cite{segers:zhao:meinguet:2017}), for every $h\ge 0$,
\beam\label{eq:december8a}
\P\big(x^{-1}\bfX_{[0,h]}\in\cdot \mid \|\bfX_{[0,h]}\|_p>x\big)
\stw \P\big(Y\,\bfQ^{(p)}(h)\in\cdot\big)\,,\qquad \xto\,,\nonumber\\
\eeam
the Pareto$(\a)$ variable $Y$ is independent of $\bfQ^{(p)}(h) \in \mathbb{R}^{d\,(h+1)}$, such that $\|\bfQ^{(p)}(h)\|_p=1$ a.s. We call $\bfQ^{(p)}(h)$
the {\em spectral component} of $\bfX_{[0,h]}$ in $\ell^p$. 

\subsection{Main result}\label{subsec:main:result}
{We start by giving our main result} on large deviations of the sequence $\bfX_{[0,n]}$, that we embed in the space $(\tilde{\ell}^p, \tilde{d}_p)$. The proof is postponed to Section~\ref{sec:LD}.
\begin{theorem}\label{thm:main:theorem}
{Consider an $\bbr^d$-valued stationary \ts\ $(\bfX_t)$ {satisfying} $\rvalpha$ for some $\a>0$.} For a given $p > 0$, assume that there exists a sequence $(x_n)$ \st\ $n\,\P(|\bfX_0| > x_n) \to 0$ 
as $n \to \infty$. Furthermore, assume that for every $\delta > 0$,
\begin{itemize}
    \item[\,$\anticlust$\; :]$ 
    \lim_{\kto}\limsup_{\nto}
    \P\big(\|\bfX_{[k,n]}\|_\infty> \delta \,x_n \mid  \,|\bfX_0|> \delta\, x_n \big)=0,\,$\\[.3mm]
    \item[$\conditionsmallsets{p}$ :] 
    \mbox{$\lim_{\epsilon \to 0}\limsup_{n \to \infty}
    \dfrac{\P(\|\ov {x_n^{-1}\bfX_{[0,n]}}^\epsilon\|_p>\delta)}
    { n\,\P(|\bfX_0| >  x_n )  } =0,
   $}\\[.3mm]
\end{itemize}
$n/x_n^p \to 0$ if $p < \alpha$, and there exists $\kappa > 0$ such that $n/x_n^{\alpha-\kappa} \to 0$ if $p=\a$.
Then, there exists $c(p) > 0$ such that
\beam \label{eq:ld}
\lim_{n \to + \infty} \dfrac{\P(\|\bfX_{[0,n]}\|_p > x_n)}{n\P(|\bfX_0| > x_n)} &=& c(p)\, . 
\eeam 
Moreover, $c(p)<\infty$ if $p\ge \alpha$, in particular, $c(\infty) 
\le c(p) \le c(\alpha) = 1$. 
If $c(p) < \infty$ there exists $\bfQ^{(p)} \in \tilde \ell^p$ {\st } $\|\bfQ^{(p)}\|_p = 1$ a.s. and
\beam \label{eq:ld:blocks}
& &\P(x_n^{-1}\bfX_{[0,n]} \in \cdot \, | \, \|\bfX_{[0,n]}\|_p > x_n) \stw
\P(Y \bfQ^{(p)} \in \cdot ), \quad \nto\,,  \nonumber \\
& &
\eeam
in the space $(\tilde \ell^p,\tilde d_p)$ where $Y$ is Pareto$(\a)$ distributed, independent of $\bfQ^{(p)}$.
\end{theorem}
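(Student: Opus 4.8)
The backbone of my approach is the exact identity $\|\bfX_{[0,n]}\|_p^p=\sum_{t=0}^n|\bfX_t|^p$, valid for every $p>0$, which recasts $\{\|\bfX_{[0,n]}\|_p>x_n\}$ as a large deviation event for the partial sums of the stationary sequence $(|\bfX_t|^p)$; since $(\bfX_t)$ is $\rvalpha$, that sequence is regularly varying with index $\alpha/p$ and its spectral tail process is an explicit functional of $\bfTh$. In the regime $n\P(|\bfX_0|>x_n)\to0$ one expects a single-big-cluster picture: the sum reaches $x_n^p$ essentially because one short window of the series carries an $\ell^p$-mass of order $x_n^p$, all remaining coordinates being negligible; the passage to $(\tilde\ell^p,\tilde d_p)$ reflects that the time location of that window inside $[0,n]$ is asymptotically irrelevant and must be quotiented out. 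I would organise the proof around three reductions. The first step I would take is truncation: fix $\epsilon>0$ and split $\bfX_{[1,n]}=\underline{\bfX_{[1,n]}}_{\epsilon x_n}+\ov{\bfX_{[1,n]}}^{\,\epsilon x_n}$; since the two summands have disjoint coordinate supports, $\|\bfX_{[1,n]}\|_p^p=\|\underline{\bfX_{[1,n]}}_{\epsilon x_n}\|_p^p+\|\ov{\bfX_{[1,n]}}^{\,\epsilon x_n}\|_p^p$ \emph{exactly}, and $\conditionsmallsets{p}$ says precisely that $\P(\|\ov{\bfX_{[1,n]}}^{\,\epsilon x_n}\|_p>\delta x_n)=o(n\P(|\bfX_0|>x_n))$ after $n\to\infty$ then $\epsilon\to0$, for every $\delta>0$; combined with the regular variation of $\P(|\bfX_0|>\cdot)$ (so that replacing $x_n$ by $(1\pm\delta)^{1/p}x_n$ is harmless in the limit), this reduces both \eqref{eq:ld} and \eqref{eq:ld:blocks} to the corresponding statements for the truncated sequence $\underline{\bfX_{[1,n]}}_{\epsilon x_n}$, uniformly in small $\epsilon$. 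The growth condition $n/x_n^{p\wedge(\alpha-\kappa)}\to0$ enters here, together with $n\P(|\bfX_0|>x_n)\to0$, to bound by Markov's inequality the truncated moment $n\,\E[|\bfX_0|^p\1(|\bfX_0|\le\epsilon x_n)]$ (respectively $n\,\E|\bfX_0|^p$ when $p<\alpha$) by $o(x_n^p)$, so that the bulk of the $p$-th powers cannot by itself reach $x_n^p$; it is also what makes $\conditionsmallsets{p}$ verifiable in examples.

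\emph{Blocking and a single dominant cluster.} Next I would choose an auxiliary block length $r_n\to\infty$ with $r_n/n\to0$ and $r_n\P(|\bfX_0|>x_n)\to0$, partition $[1,n]$ into $m_n\sim n/r_n$ consecutive blocks $I_1,\dots,I_{m_n}$ separated by short gaps of negligible total length, and use additivity of $\|\cdot\|_p^p$. The core of the proof is then to show, combining $\anticlust$ (now exploited at all levels $\delta$, which forces the truncated coordinates to live in windows of asymptotically bounded length) with $\conditionsmallsets{p}$ and the smallness of $n\P(|\bfX_0|>x_n)$, that up to probability $o(n\P(|\bfX_0|>x_n))$ at most one block carries $\ell^p$-mass above $\eta x_n^p$, no cluster straddles a gap, and the remaining blocks contribute $o(x_n^p)$ in total. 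Granting this, stationarity and an inclusion--exclusion argument yield
\begin{align*}
\P\big(\|\bfX_{[0,n]}\|_p>x_n\big)
= m_n\,\P\big(\|\underline{\bfX_{[1,r_n]}}_{\epsilon x_n}\|_p>x_n(1+o(1))\big)+o\big(n\P(|\bfX_0|>x_n)\big),
\end{align*}
and identifying the per-window contribution through the spectral tail process forces the ratio in \eqref{eq:ld} to converge to a limit $c(p)\in(0,\infty]$; the lower bound $c(p)>0$ already follows by restricting to configurations with a single dominant cluster, which $\anticlust$ prevents from being asymptotically cancelled. Equivalently, after the truncation \eqref{eq:ld} is a large deviation principle for partial sums of the regularly varying sequence $(|\bfX_t|^p)$ in the spirit of \cite{mikosch:wintenberger:2013,mikosch:wintenberger:2016}.

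\emph{The bounds on $c(p)$.} Monotonicity of $p\mapsto\|x\|_p$ gives the inclusions $\{\|\bfX_{[0,n]}\|_\infty>x_n\}\subseteq\{\|\bfX_{[0,n]}\|_p>x_n\}\subseteq\{\|\bfX_{[0,n]}\|_\alpha>x_n\}$ for $\alpha\le p\le\infty$, hence $c(\infty)\le c(p)\le c(\alpha)$. At $p=\alpha$ the sequence $(|\bfX_t|^\alpha)$ is regularly varying with index exactly $1$, and in this boundary case the cluster effects on the partial sums cancel --- the amplification within a cluster is compensated by the rescaling of the single-coordinate tail and by the thinning of cluster locations --- leaving $c(\alpha)=1$, the extra growth condition absorbing the slowly varying corrections inherent to the index-$1$ boundary. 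For $p\ge\alpha$ the index $\alpha/p\le1$ places us in the regime where partial sums are dominated by their largest cluster, whose cluster index is finite, so $c(p)<\infty$; for $p<\alpha$ the bulk may accumulate and only existence, not finiteness, of the limit is asserted.

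\emph{The limiting block.} When $c(p)<\infty$, on the event $\{\|\bfX_{[0,n]}\|_p>x_n\}$ the block $\bfX_{[0,n]}$ coincides, up to a $\tilde d_p$-negligible error, with the single dominant cluster isolated in the blocking step; shifting that cluster to a canonical position --- which is exactly the passage to $\tilde\ell^p$ --- and invoking regular variation of a long, then infinite, block yields \eqref{eq:ld:blocks} in $(\tilde\ell^p,\tilde d_p)$, with $Y$ Pareto$(\alpha)$ independent of a limiting spectral cluster process $\bfQ^{(p)}$ satisfying $\|\bfQ^{(p)}\|_p=1$ a.s., whose law is read off from $\bfTh$ as in Section~\ref{sec:spectral:cluster:process}. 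The hard part throughout will be the ``one dominant cluster'' step of the blocking argument --- controlling simultaneously the ``two well-separated large windows'' event and the boundary-straddling event, uniformly in the truncation level $\epsilon$ --- where $\anticlust$ at all levels, $\conditionsmallsets{p}$ and $n\P(|\bfX_0|>x_n)\to0$ must be carefully balanced against each other.
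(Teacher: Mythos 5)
Your first reduction (exact additivity of $\|\cdot\|_p^p$, truncation at level $\epsilon x_n$, disposal of the small values via $\conditionsmallsets{p}$, and the remark that only coordinates exceeding $\epsilon x_n$ can then matter) coincides with the opening of the paper's argument, and your use of the one-sided condition $\anticlust$ to rule out two well-separated large windows is in the right spirit. The structural route you then take, however, is different: you introduce an auxiliary blocking into $m_n$ windows of length $r_n$ and reduce \eqref{eq:ld} to a per-block statement $m_n\,\P(\|\bfX_{[0,r_n]}\|_p>x_n(1+o(1)))$, whereas the paper never blocks at all. It works on the full window $[0,n]$ with a telescoping-sum identity over the starting index, stationarity, $\anticlust$ to cut each telescoped term to a fixed-length window, regular variation to pass to the tail process, and then the time-change formula \eqref{eq:june5a}; this machinery is what produces the explicit constant $c(p)=\E[\|\bfTh/\|\bfTh\|_\a\|_p^\a]$ in \eqref{eq:dec10a} and, in functional form (Proposition~\ref{Prop:large_deviations}), the law of $\bfQ^{(p)}$.

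This is also where your proposal has a genuine gap: the blocking step only transfers the problem from window length $n$ to window length $r_n$; it does not identify the limit. At the point where you write that ``identifying the per-window contribution through the spectral tail process forces the ratio in \eqref{eq:ld} to converge'', the actual work — the telescoping/conditioning argument, the interchange of limits in $k$ and $\epsilon$, and the time-change computation that yields \eqref{eq:dec11b} — is missing; appealing to the large deviation results of Mikosch and Wintenberger does not close it, since those theorems are proved under additional mixing/drift-type hypotheses not assumed in Theorem~\ref{thm:main:theorem} and do not deliver the constant in the form needed here. As a consequence, your key assertions that $c(\alpha)=1$ (justified only by a heuristic ``cancellation of cluster effects'') and hence $c(p)\le 1<\infty$ for $p\ge\alpha$ are unproven: in the paper these are immediate from the formula $c(p)=\E[\|\bfTh\|_p^\alpha/\|\bfTh\|_\alpha^\alpha]$, which you never derive. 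The same issue affects \eqref{eq:ld:blocks}: you propose to ``read off'' the law of $\bfQ^{(p)}$ from Section~\ref{sec:spectral:cluster:process}, but the representation in Proposition~\ref{lem:representation} is itself a by-product of the proof of \eqref{eq:ld:blocks} (the limit $J_\epsilon$ in \eqref{eq:change:of:norm}), so this reference is circular. To complete your route you would still have to carry out, for the $r_n$-blocks, essentially the same telescoping-plus-time-change identification that the paper performs directly on $[0,n]$ — at which point the auxiliary blocking buys nothing.
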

First, notice that under $\rvalpha$, $\anticlust$ and $\conditionsmallsetsp{\a}$ we obtain $c(\alpha)=1 $. This {motivates}  
the study of extremal $\ell^\alpha$--blocks since they
reach high levels at a constant rate regardless of the temporal dependencies traced through $c(p)$ in ~\eqref{eq:ld}. Second, notice that for $p > \alpha$ the result of Theorem~\ref{thm:main:theorem} holds under $\rvalpha$ and $\anticlust$ solely. Indeed, condition $\conditionsmallsetsp{p}$ holds for $p> \alpha$ by a Karamata--type argument; see Remark~\ref{rem:may27}. We state Theorem~\ref{thm:main:theorem} under the one-sided anti-clustering condition $\anticlust$. We use this condition together with a telescoping sum argument to compensate for the classical two-sided condition \eqref{eq:twosided} used in Kulik and Soulier \cite{kulik:soulier:2020}. { Conditions 
similar to $\conditionsmallsetsp{p}$ are standard when dealing with sum \fct als acting on $(\bfX_t)$; see e.g., Mikosch and Wintenberger \cite{mikosch:wintenberger:2013}. We refer to Section~\ref{sec:large:deviation} for a thorough discussion on the conditions $\anticlust$, $\conditionsmallsetsp{p}$, and the growth conditions imposed on $(x_n)$.}  }
\par 
We refer to a relation of the type \eqref{eq:ld} as {\em \ld\  \pro ies} motivated by the following observation. Write $S_{k}^{(p)}=\sum_{t=0}^{k} |\bfX_t|^p$, for $k\ge 1$. Then  $|\bfX|^p$ is
\regvary\ with index $\alpha/p$.
Relation \eqref{eq:ld}
implies that
\beao
\P(\|\bfX_{[0,n]}\|_p > x_n)&=& \P\big(S_{n}^{(p)} >x_n^p\big)\\
&\sim& c(p)\,n\,\P(|\bfX_0|>x_n)\to 0\,,\quad\nto\,.
\eeao
Thus the left-hand \pro y describes the rare event that the sum
process $S_{n}^{(p)}$ exceeds the 
extreme threshold $x_n^p$.
\par
Relation~\eqref{eq:ld:blocks} extends the \ld\ result for $\|\bfX_{[0,n]}\|_p$ in \eqref{eq:ld} to {one} for the process $\bfX_{[0,n]}$ in the \seq\ space $(\tilde{\ell^p},\tilde{d}_p)$. Motivated by inference of the spectral cluster process $\bfQ^{(p)}$,
we establish \eqref{eq:ld:blocks}  employing 
weak convergence in the spirit of the polar decomposition from \eqref{eq:december8a}. 

\bre
Recall Hult and Lindskog \cite{hult:lindskog:2006}
introduced \regvar\ for random elements assuming values in a 
general complete separable metric space
by extending the vague \con\ approach (see Resnick \cite{resnick:2007}) to $M_0$-\con ; see also Lindskog {\it et al.} \cite{lindskog:resnick:roy:2014}.
Relation~\eqref{eq:ld:blocks} provides a family of Borel sets in $(\tilde \ell^p, \tilde d_p)$ 
for which the weak limit of the self-normalized 
blocks $\bfX_{[0,n]}/\|\bfX_{[0,n]}\|_p$ exists. 
This result implies that the \seq\ of \ms s 
\beao
&&\mu_{n}(\cdot) :=
\P(x_n^{-1}\bfX_{[0,n]} \in \cdot)/\P(\|\bfX_{[0,n]}\|_p > x_n)\\\to&& 
\mu(\cdot) := \int_0^\infty \P(y\,\bfQ^{(p)} \in \cdot)\, d(-y^{-\alpha})\,,\quad \nto\,,
\eeao 
in the $M_0$--sense in $(\tilde\ell^p,\tilde{d}_p)$. By the 
portmanteau theorem for measures 
(Theorem 2.4. in Hult and Lindskog  \cite{hult:lindskog:2006}) 
\beao
\mu_{n}(A)= \P(x_n^{-1}\bfX_{[0,n]} \in A)/\P(\|\bfX_{[0,n]}\|_p > x_n)
\to \mu(A)\,,
\eeao
 for all Borel sets $A$ in $(\tilde{\ell}^p, \tilde{d}_p)$ 
satisfying $\mu(\partial{A}) = 0$ and $\bfzero \not \in \overline{A}$. 
This approach is discussed in Kulik and Soulier \cite{kulik:soulier:2020}.
\ere 

\section{Spectral cluster process representation}\setcounter{equation}{0}\label{sec:spectral:cluster:process}
\subsection{The spectral cluster process in $\ell^p$}\label{subsec:ellp}
{From \eqref{eq:may27a} recall the spectral tail
process $\bfTh$ of a stationary \seq\ $(\bfX_t)$
satisfying $\rvalpha$. We start by showing a representation of the 
spectral cluster process $\bfQ^{(p)}$ from \eqref{eq:ld:blocks} in terms of $\bfTh$.} We deduce that the spectral cluster process is also well defined in $(\ell^p,d_p)$. The proof is deferred to Section~\ref{sec:proof:sec:3}.

\begin{proposition}\label{lem:representation}
For $p,\a > 0$ assume $\|\bfTh\|_p+\|\bfTh\|_\a<\infty$ a.s. Under the assumptions of Theorem~\ref{thm:main:theorem}  the constant $c(p)$ defined in \eqref{eq:ld}   admits the representation
\beam\label{eq:dec10a}
c(p) &=& \mathbb{E}\big[\|\bfTh /\|\bfTh \|_\a\|_p^\a \big]\,.
\eeam
In addition, if $c(p) < \infty$ then {the \ds\ of the spectral cluster process $\bfQ^{(p)}$ is given by
\beam\label{eq:def:cluster:process1}\label{eq:def:cluster:process}
\P(\bfQ^{(p)} \in \; \cdot) &=& (c(p))^{-1} \E\big[\|\bfTh /\|\bfTh \|_\a \|_p^\a\;  \1(\bfTh /\|\bfTh\|_p  \in \; \cdot \; )\big]   \,,
\eeam
in the space $({\ell}^p,{d}_p)$.}
\end{proposition}

{This result provides a new representation of the \ds\ of 
$\bfQ^{(p)}$ for fixed $p$. In what follows, under the assumptions of Theorem~\ref{thm:main:theorem}, the spectral cluster processes are assumed to be defined in the space $(\ell^p,d_p)$ via \eqref{eq:def:cluster:process}. 
Proposition \ref{lem:representation} also relates distinct 
spectral cluster processes to each other {by the change-of-norms transform in \eqref{eq:def:cluster:process1}}. In the next section we deal with the case $p = \alpha $.}
\subsection{The spectral cluster process in $\ell^{\a}$}\label{sec:spectral:a} In view of \eqref{eq:def:cluster:process1} the process 
$\bfTh/\|\bfTh\|_\alpha$ is the candidate for the 
$\ell^\a-$spectral cluster process $\bfQ^{(\alpha)}$ introduced in \eqref{eq:ld:blocks}, and it plays a key role for characterizing $\bfQ^{(p)}$ in general. The following result shows that $\bfTh/\|\bfTh\|_\alpha$ is well defined in $(\ell^\alpha, d_\a)$ under $\anticlust$. 
\bpr \label{prop:Janssenequivalences}
Let $(\bfX_t)$ be a stationary \seq\ satisfying $\rvalpha$ with 
spectral tail process $(\bfTh_t)$.  
Then the following statements are equivalent: 
\begin{enumerate}
    \item[\rm i)] $\|\bfTh \|_\alpha < \infty $ \as\ and $\bfTh/\|\bfTh\|_\a$ is well defined in $\ell^\a$.
    \item[\rm ii)] $|\bfTh_t| \to 0$ \as\ as $t \to \infty$.
    \item[\rm iii)] The time of the largest record $T^* := \inf\{s :s \in \mathbb{Z}\mbox{ such that } |\bfTh_s| = \sup_{t \in \mathbb{Z}}|\bfTh_t|\}$ is finite a.s.
\end{enumerate}

Moreover, these statements hold under $\anticlust$.
\epr
 A proof of Proposition~\ref{prop:Janssenequivalences} is given in Lemma~3.6 of Buritic\'a {\it et al.}~\cite{buritica:mikosch:meyer:wintenberger}, appealing to results by Janssen \cite{janssen:2019}.
 \par 
From \eqref{eq:december8a} recall 
the \seq\ of spectral components $ (\bfQ^{({\a})}(h))_{h\ge 0}$ 
of the vectors $(\bfX_{[0,h]})_{h\ge 0}$, satisfying {the property
$\|\bfQ^{(\a)}(h)\|_\a=1$ a.s.} 
Our next result relates this sequence of spectral components 
to $ \bfTh/\|\bfTh\|_\alpha$.
\begin{proposition}\label{prop:limitSI} Let $(\bfX_t)$ be a stationary time series satisfying $\rvalpha$  
and $\lim_{t\to \infty}|\bfTh_t|= 0$ \as\ Then
$\bfQ^{(\a)}(h)\std \bfQ^{(\a)}(\infty)$ as $h\to\infty$ in $(\wt\ell^\alpha,\wt d_\a)$ with $\bfQ^{(\a)}(\infty) \eqd \bfTh/\|\bfTh\|_\alpha$.
\end{proposition}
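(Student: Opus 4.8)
The plan is to pass from the finite-block spectral component $\bfQ^{(\a)}(h)$ to its limit through an explicit representation in terms of the spectral tail process $\bfTh$, the decisive structural fact being $\|\bfTh\|_\a<\infty$ a.s. Indeed, the hypothesis gives $|\bfTh_t|\to0$ as $t\to+\infty$ a.s., so by Proposition~\ref{prop:Janssenequivalences} also $\|\bfTh\|_\a<\infty$ a.s., hence $|\bfTh_t|\to0$ as $t\to-\infty$ a.s.\ and $T^*<\infty$ a.s. I would first identify the claimed limit: evaluating Proposition~\ref{lem:representation} at $p=\a$, and using $c(\a)=1$ together with $\|\bfTh/\|\bfTh\|_\a\|_\a^\a=1$ a.s., gives $\P(\bfQ^{(\a)}\in\cdot)=\P(\bfTh/\|\bfTh\|_\a\in\cdot)$, a genuine probability law on $(\ell^\a,d_\a)$ carried by the unit $\ell^\a$-sphere by Proposition~\ref{prop:Janssenequivalences}. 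Thus $\bfTh/\|\bfTh\|_\a$ is exactly the spectral cluster process $\bfQ^{(\a)}$ of Theorem~\ref{thm:main:theorem}; putting $\bfQ^{(\a)}(\infty):=\bfTh/\|\bfTh\|_\a$, it remains to prove $\bfQ^{(\a)}(h)\std\bfQ^{(\a)}(\infty)$ in $(\wt\ell^\a,\wt d_\a)$. This cannot simply be read off Theorem~\ref{thm:main:theorem}, where the block length grows with the threshold and $\P(\|\bfX_{[0,n]}\|_\a>x_n)\sim n\,\P(|\bfX_0|>x_n)$, whereas for a fixed block the ratio $c_h:=\lim_x\P(\|\bfX_{[0,h]}\|_\a>x)/\P(|\bfX_0|>x)$ grows with $h$, so the two conditionings differ and must be reconciled via the $\bfTh$-representation.

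By \eqref{eq:december8a}, $\bfQ^{(\a)}(h)$ is the spectral component of the regularly varying vector $\bfX_{[0,h]}$ with respect to $\|\cdot\|_\a$. The next step is to write down its law by computing the tail measure of $\bfX_{[0,h]}$ through $\bfTh$ with the standard anchoring device (one distinguishes, inside the window, a single index realising the running record of the path, so as to avoid multiple counting; cf.\ Basrak and Segers \cite{basrak:segers:2009}, Basrak {\it et al.}~\cite{basrak:planinic:soulier:2018}). This gives $\P(\|\bfX_{[0,h]}\|_\a>x)/\P(|\bfX_0|>x)\to c_h\in(0,\infty)$ and a formula expressing $\E[f(\bfQ^{(\a)}(h))]$, for bounded $\wt d_\a$-continuous $f$, as $c_h^{-1}$ times an expectation of a sum over anchor positions $j\in\{0,\dots,h\}$ of $f$ evaluated at the $\ell^\a$-normalised restriction of $\bfTh$ to $[-j,h-j]$, each summand carrying a nonnegative weight and the anchoring indicator. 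In the quotient space $\wt\ell^\a$, $\bfQ^{(\a)}(h)$ is therefore a weighted average, over the $h+1$ anchor positions, of normalised windowed copies of $\bfTh$, and the position of the window relative to the anchor of $\bfTh$ is immaterial modulo shifts.

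It remains to let $h\to\infty$. Since $\sum_{|t|>M}|\bfTh_t|^\a\to0$ a.s.\ as $M\to\infty$, for every anchor position with $M\le j\le h-M$ the $d_\a$-distance from the normalised restriction of $\bfTh$ to $[-j,h-j]$ to $\bfTh/\|\bfTh\|_\a$ is at most a quantity $\rho(M)\to0$ a.s., uniformly in such $j$ and in $h$, hence the same holds in $\wt d_\a$. The remaining ``boundary'' anchor positions ($j<M$ or $j>h-M$) number at most $2M$ out of $h+1$, so their aggregate contribution to the weighted average vanishes as $h\to\infty$, along with the truncation residuals. A dominated-convergence argument --- the individual weights are bounded, with aggregate controlled by $\|\bfTh\|_\a^\a<\infty$ --- then yields $\E[f(\bfQ^{(\a)}(h))]\to\E[f(\bfTh/\|\bfTh\|_\a)]$ for every bounded $\wt d_\a$-continuous $f$, which is the claimed weak convergence. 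The main obstacle is precisely this bookkeeping in the quotient space: arranging the anchored representation so that the random shift cancels on passing to equivalence classes, controlling the $O(M)$ boundary anchor positions where the window has not yet captured the $\ell^\a$-mass of $\bfTh$, and obtaining the uniform integrability needed for the passage to the limit --- all resting on $\|\bfTh\|_\a<\infty$ a.s.\ (Proposition~\ref{prop:Janssenequivalences}) and on the form of $\wt d_\a$, which is why the statement is clean only in $(\wt\ell^\a,\wt d_\a)$.
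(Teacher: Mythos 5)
Your overall strategy is the same as the paper's: represent the law of $\bfQ^{(\alpha)}(h)$ as a normalised sum over the $h+1$ positions of the conditioning window, expressed through windows $\bfTh_{-j+[0,h]}$ of the spectral tail process, then let $h\to\infty$ using $\|\bfTh\|_\alpha<\infty$ a.s., a boundary/interior split over positions, boundedness of the test function, and portmanteau. The gap sits in the middle step, which you leave vague exactly where the choice matters. You propose to obtain the finite-$h$ representation by the record-anchoring device, so the summand at anchor position $j$ carries the random weight $\|\bfTh_{-j+[0,h]}\|_\alpha^\alpha$ times an anchoring indicator. Your limit argument then asserts that ``the individual weights are bounded, with aggregate controlled by $\|\bfTh\|_\alpha^\alpha<\infty$'' and that the at most $2M$ boundary positions out of $h+1$ contribute negligibly. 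Neither claim is available for the record-anchored weights: $\|\bfTh_{-j+[0,h]}\|_\alpha^\alpha\ge 1$ is an unbounded random variable, and under the sole hypotheses of the proposition ($\rvalpha$ and $|\bfTh_t|\to 0$ a.s.) one only knows $\|\bfTh\|_\alpha<\infty$ a.s., not $\E[\|\bfTh\|_\alpha^\alpha]<\infty$. Hence dominating the interior terms and controlling the $O(M)$ boundary terms requires uniform-integrability input (e.g.\ time-change identities such as $\E[\|\bfTh\|_\alpha^\alpha\,\1(T^*=0)]=1$, together with bounds on the anchored expectations uniform in $(j,h)$) that you neither state nor prove, and that are not automatic at this level of generality. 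As written, the dominated-convergence step does not match the representation you constructed.

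The paper sidesteps all of this by a different decomposition: in Lemma~\ref{prop:constant_term_rho} the positions are weighted by $\|\bfTh_{-k+[0,h]}\|_p^\alpha/\|\bfTh_{-k+[0,h]}\|_\alpha^\alpha$, with no anchoring indicator, which for $p=\alpha$ makes every weight identically $1$ and $c(\alpha,h)=h+1$, i.e.\ $\bfQ^{(\alpha)}(h)\eqd \bfTh_{-U^{(h)}+[0,h]}/\|\bfTh_{-U^{(h)}+[0,h]}\|_\alpha$ with $U^{(h)}$ uniform on $\{0,\dots,h\}$ and independent of $\bfTh$. With this equal-weight representation your boundary/interior argument (or the Ces\`aro argument of Lemma~\ref{lem:cc}) goes through immediately, since the boundary positions carry probability $2M/(h+1)$ and $f$ is bounded. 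So either derive that representation — which is the real content of the proof and is missing from your write-up — or supply the uniform integrability needed to push the record-anchored version through. A minor point: your appeal to Proposition~\ref{lem:representation} to identify the limit invokes the assumptions of Theorem~\ref{thm:main:theorem}, which are not hypotheses here; it is also unnecessary, since $\bfQ^{(\alpha)}(\infty)$ is simply defined to be distributed as $\bfTh/\|\bfTh\|_\alpha$.
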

This result
gives raise to the interpretation of $\bfTh/\|\bfTh\|_\a$ as {\em the spectral component of $(\bfX_t)$ in $\ell^\alpha$.} The proof is given in Section~\ref{sec:proof:sec:3}.
We deduce the following almost sure relation in terms of the spectral cluster process in $\ell^\alpha$.
 \begin{proposition}\label{prop:a.s.:alpha}
 Let $(\bfX_t)$ be a stationary \seq\ satisfying $\rvalpha$ with 
spectral tail process $(\bfTh_t)$. Under the assumptions  $\anticlust$ and $\conditionsmallsetsp{\a}$, 
we deduce the a.s. \rep s 
$\bfQ^{(\alpha)}=\bfTh/\|\bfTh\|_\a$ and $\bfTh=\bfQ^{(\a)}/|\bfQ_0^{(\a)}|$ in $(\wt \ell^\a, \wt d_\a)$. 
 \end{proposition}
 \par 
 Proposition~\ref{prop:a.s.:alpha} follows directly from Propositions \ref{lem:representation} and \ref{prop:Janssenequivalences}.

\section{Consistent cluster inference based on spectral cluster processes}\label{sec:Applications}\setcounter{equation}{0}
Let $\bfX_1,\ldots,\bfX_n$ be a sample from a 
stationary \seq\ $(\bfX_t)$ satisfying $\rvalpha$ for some $\a>0$, and choose 
$p>0$. 
We split the sample into disjoint blocks $\bfB_t:=\bfX_{(t-1)b + [1,b]}$, $t=1,\ldots,m_n$, where $b=b_n\to\infty$ and $m=m_n:=[n/b_n]\to\infty$.  
{Throughout we assume that the sequence $(x_n)$ satisfies the conditions of 
Theorem~\ref{thm:main:theorem} for $p > 0$. We denote $k=k_n:=[m_n\P(\|\bfB\|_p > x_{b_n})]\to\infty$. Then, in particular $\P(\|\bfB_1\|_p > x_b)\to 0$, $m_n \to \infty$ and $k_n \to \infty$. }
\par
{


\subsection{Cluster functionals and mixing}
The real-valued \fct\ 
 $g$ on $\tilde{\ell}^p$ is a {\em cluster \fct al} 
if it vanishes  in some 
neighborhood of the origin and $\P(Y\bfQ^{(p)} \in D(g)) = 0 $ where $D(g)$ denotes the set of discontinuity points of~$g$. 
In what follows, it will be convenient
to write $\mathcal{G}_{+}(\tilde{\ell}^p)$ 
for the {class of} non-negative \fct s on $\wt{\ell}^p$ which vanish in some neighborhood of the origin. 
\par 
For \asy\ theory we will need the following {\em mixing condition}.\\[2mm]
\textbf{Condition $\conditionMX{p}$.}
There exists an integer sequence {$b_n \to \infty$ \st\  $m_n\to  \infty$, $k_n \to \infty$,
and for every Lipschitz-continuous
$f\in\mathcal{G}_{+}(\tilde{\ell^p})$, 
the sequence $(x_n)$ satisfies}
\beam
\label{eq:mixing}
        \E\big[  \ex^{-    \frac{1}{k  } \sum_{t=1}^{m} f(x_{b}^{-1} \bfB_t)}\big] = 
\big(\E \big[ \ex^{- \frac{1}{k} \sum_{t=1}^{\lfloor m/k \rfloor} f( x_{b}^{-1} \bfB_t) } \big]\big)^{k} +o(1)\,,\quad\nto\,.\nonumber\\
\eeam   
with $m_n :=  \lfloor  n/b_n \rfloor$ and $k_n:=\lfloor  m_n\P(\|\bfB\|_p > x_{b_n})\rfloor$. \\
\par 
{If $\conditionMX{p}$ is required in the sequel we will refer to 
the sequences $(b_n)$, $(m_n)$ and $(k_n)$ chosen in this condition.
\bre
Condition $\conditionMX{p}$ is similar to the mixing conditions $\mathcal{A}$, $\mathcal{A}^\prime$ in 
Davis and Hsing  \cite{davis:hsing:1995}, Basrak {\it et al.} \cite{basrak:krizmanic:segers:2012}, respectively. These are defined  
in terms of \seq s $(f(\bfX_t))$ while our \fct als $f$ act on blocks.
$\conditionMX{p}$ holds under mild conditions, for example, under strong mixing
with quite general rate; {cf. Lemma 6.2. in Basrak {\it et al.} \cite{basrak:planinic:soulier:2018}.} 
\ere

\subsection{Consistent cluster inference} {The following result is the basis
for an empirical procedure for spectral cluster inference built on disjoint blocks. The proof is given in Section~\ref{prooof:propcons}.}
\begin{theorem}\label{cor:consistency}
Assume the conditions of Theorem~\ref{thm:main:theorem} hold for $p>0$ with $c(p) < \infty$ together with
$\conditionMX{p}$.
Then $ \|\bfB\|_{p,( k+1 )}/x_{b} \xrightarrow[]{\P} 1$ and for every $g \in \mathcal{G}_{+}(\tilde{\ell^p})$,
    \begin{equation}\label{eq:consistent_estimator2}
         \frac{1}{k} \sum_{t=1}^{ m}  g\big( \|\bfB\|_{p,( k+1 )}^{-1}\bfB_t\big) \xrightarrow[]{\P}  \int_{0}^{\infty} \E\big[g(y \bfQ^{(p)})\big] d(-y^{-\alpha}), \quad n \to \infty\,.
      \end{equation}
      such that
  $
  \|\bfB\|_{p,(1)} \ge  \|\bfB\|_{p,(2)} \ge \cdots \ge \|\bfB\|_{p,(m)}\,.
  $
     \end{theorem}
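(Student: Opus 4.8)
The plan is to deduce Theorem~\ref{cor:consistency} from Theorem~\ref{thm:main:theorem} by a standard two-step argument: first establish convergence of the empirical point-process / Laplace functional of the normalized blocks $(x_b^{-1}\bfB_t)_{1\le t\le m}$ when the \emph{deterministic} threshold $x_b$ is used, and then replace $x_b$ by the random order statistic $\|\bfB\|_{p,(k)}$ via a consistency statement $\|\bfB\|_{p,(k)}/x_b\stp 1$ together with a monotonicity/sandwiching argument. Throughout we work in $(\tilde\ell^p,\tilde d_p)$ and use that, by Theorem~\ref{thm:main:theorem}, $\P(\|\bfB_1\|_p>x_b)\sim c(p)\,b\,\P(|\bfX_0|>x_b)$ and that the conditional law of $x_b^{-1}\bfB_1$ given $\|\bfB_1\|_p>x_b$ converges weakly to $\P(Y\bfQ^{(p)}\in\cdot)$; equivalently the measures $\mu_b(\cdot)=\P(x_b^{-1}\bfB_1\in\cdot)/\P(|\bfX_0|>x_b)$ converge in $M_0(\tilde\ell^p\setminus\{\bfzero\})$ to $c(p)\int_0^\infty\P(y\bfQ^{(p)}\in\cdot)\,\alpha y^{-\alpha-1}dy$, as recorded in the Remark after Theorem~\ref{thm:main:theorem}.

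For the first step, fix a Lipschitz $f\in\mathcal G_+(\tilde\ell^p)$ and consider the normalized sum $\frac1k\sum_{t=1}^m f(x_b^{-1}\bfB_t)$. Using the mixing condition $\conditionMX p$ to factorize the Laplace transform into $k_n$ nearly independent blocks of length $\lfloor m/k\rfloor$, together with the elementary bound $|\,\E[e^{-Z}]-1+\E[Z]\,|\le \frac12\E[Z^2]$ for a sum $Z$ over a short block, one reduces the problem to computing $\frac{m}{k}\,\E[f(x_b^{-1}\bfB_1)]$ and controlling the second moment. Since $f$ vanishes near $\bfzero$ and $\P(Y\bfQ^{(p)}\in D(f))=0$, the $M_0$-convergence of $\mu_b$ gives
\begin{align*}
\frac{m}{k}\,\E\big[f(x_b^{-1}\bfB_1)\big]
&=\frac{m\,\P(|\bfX_0|>x_b)}{k}\int f\,d\mu_b
\;\longrightarrow\;\frac{1}{c(p)}\int f\,d\mu
=\int_0^\infty \E\big[f(y\bfQ^{(p)})\big]\,d(-y^{-\alpha})\,,
\end{align*}
because $k/( m\,\P(|\bfX_0|>x_b))\to c(p)$ by Theorem~\ref{thm:main:theorem} and the definition of $k_n$. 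A Markov/Chebyshev argument (again exploiting that $f$ is bounded and supported away from $\bfzero$, so $f(x_b^{-1}\bfB_1)^2\le C\,\1(\|\bfB_1\|_p>\delta x_b)$) shows the second-moment remainder in each short block is $O(1/k)\to 0$, so the factorized Laplace transform converges to $\exp(-\int_0^\infty\E[1-e^{-f(y\bfQ^{(p)})}]\,d(-y^{-\alpha}))$; hence $\frac1k\sum_{t=1}^m f(x_b^{-1}\bfB_t)$ converges in distribution, and in fact the limit is the Laplace functional of the Poisson-type cluster measure. Specializing to $f=\varepsilon g\wedge M$ and letting $\varepsilon\to0$ or by a direct first-moment computation with $f=g$ itself (no factorization needed for convergence \emph{in probability} of the mean, only a variance bound), one gets $\frac1k\sum_{t=1}^m g(x_b^{-1}\bfB_t)\stp\int_0^\infty\E[g(y\bfQ^{(p)})]\,d(-y^{-\alpha})$ for every $g\in\mathcal G_+(\tilde\ell^p)$ with $\P(Y\bfQ^{(p)}\in D(g))=0$; the general $g\in\mathcal G_+$ follows since such $g$ is a monotone limit of continuous ones and the limiting measure is $\sigma$-finite on $\tilde\ell^p\setminus\{\bfzero\}$.

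For the second step, apply the first step to the specific functional $f_\delta(\bfx)=\1(\|\bfx\|_p>\delta)$ — more precisely to a continuous bracketing pair below and above it, which is legitimate because the limiting measure charges no sphere $\{\|\bfx\|_p=\delta\}$ (the radial part of $\mu$ is $\alpha y^{-\alpha-1}dy$, atomless). This yields $\frac1k\#\{t\le m:\|\bfB_t\|_p>\delta x_b\}\stp \delta^{-\alpha}$ for every $\delta>0$, i.e. $\frac1k\sum_{t=1}^m\1(\|\bfB_t\|_p>\delta x_b)\stp\delta^{-\alpha}$. Since $\{\|\bfB\|_{p,(k)}>\delta x_b\}=\{\sum_t\1(\|\bfB_t\|_p>\delta x_b)\ge k\}$, and $\delta^{-\alpha}>1$ iff $\delta<1$, a standard argument gives $\|\bfB\|_{p,(k)}/x_b\stp 1$. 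Finally, to transfer \eqref{eq:consistent_estimator2} from the deterministic threshold to the random one, fix $g\in\mathcal G_+(\tilde\ell^p)$; given $\eta>0$ choose $\lambda$ close to $1$ and use that on the event $\{\lambda^{-1}\le \|\bfB\|_{p,(k)}/x_b\le\lambda\}$ (which has probability $\to1$) one has, by considering the first step applied at thresholds $\lambda^{\pm1}x_b$ and letting $\lambda\to1$, that $\frac1k\sum_t g(\|\bfB\|_{p,(k)}^{-1}\bfB_t)$ is sandwiched between quantities converging to $\int_0^\infty\E[g((\lambda^{\mp1}y)\bfQ^{(p)})]\,d(-y^{-\alpha})$, both of which tend to the asserted limit as $\lambda\to1$ by dominated convergence (using that $g$ is bounded and supported away from $\bfzero$, the integrand is dominated by $\|g\|_\infty\,\1(y>c)$). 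This requires $g$ to have the mild regularity that $y\mapsto\E[g(y\bfQ^{(p)})]$ is continuous, which holds for $g$ with $\P(Y\bfQ^{(p)}\in D(g))=0$ and no atom of $\bfQ^{(p)}$ on the relevant spheres; the general case follows by monotone approximation as before.

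The main obstacle is the transfer from the deterministic to the random threshold in the last paragraph: the map $\bfx\mapsto g(\|\bfB\|_{p,(k)}^{-1}\bfx)$ involves the \emph{data-dependent} scaling, so one cannot directly invoke the weak convergence with a fixed test function. The sandwiching works cleanly only because $g$ vanishes near $\bfzero$ (so scaling errors are uniformly controlled) and because the limiting radial measure is continuous, so the bracketing at thresholds $\lambda^{\pm1}x_b$ closes as $\lambda\downarrow1$; making this rigorous for \emph{all} $g\in\mathcal G_+(\tilde\ell^p)$, not merely bounded Lipschitz ones, needs the monotone-class/approximation argument and the $M_0$-framework of Hult–Lindskog invoked in the Remark. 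A secondary technical point is verifying the variance bound inside $\conditionMX p$'s block decomposition, which again rests entirely on $f$ being bounded and supported away from the origin together with the regular-variation tail estimate $\P(\|\bfB_1\|_p>\delta x_b)=O(b\,\P(|\bfX_0|>x_b))=O(k/m)$ from Theorem~\ref{thm:main:theorem}.
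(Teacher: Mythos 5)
Your overall route is the same as the paper's. You first prove the deterministic-threshold statement (the paper's Lemma~\ref{prop:consistency}) by factorizing the Laplace functional of $\frac1k\sum_{t\le m} f(x_b^{-1}\bfB_t)$ via $\conditionMX{p}$ and identifying $\frac{m}{k}\,\E[f(x_b^{-1}\bfB_1)]$ through the large deviation principle of Theorem~\ref{thm:main:theorem}; you then obtain $\|\bfB\|_{p,(k)}/x_b\stp 1$ by counting exceedances of $\delta x_b$ (the paper cites Resnick's argument for exactly this); and you transfer to the random threshold by a scaling argument, where the paper instead proves joint convergence of the empirical measure $P_n=\frac1k\sum_{t\le m}\1(x_b^{-1}\bfB_t\in\cdot)$ and $\|\bfB\|_{p,(k)}/x_b$ and applies the continuous mapping theorem to the scaling map $s(P,t)=P(t\,\cdot)$, whose continuity is checked on Lipschitz $f\in\mathcal{G}_+(\tilde{\ell}^p)$.

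Three points need repair. (i) With the $1/k$ inside the exponent, the factorized Laplace transform converges to $\exp\big(-\int_0^\infty\E[f(y\bfQ^{(p)})]\,d(-y^{-\alpha})\big)$, not to the Poisson-type limit $\exp\big(-\int_0^\infty\E[1-e^{-f(y\bfQ^{(p)})}]\,d(-y^{-\alpha})\big)$ that you assert; the latter would mean distributional convergence to a nondegenerate Poisson functional, contradicting the claimed convergence in probability to a constant. The correct limit follows from your own first-moment computation (as in the paper), applied to $\theta f$ for every $\theta>0$. (ii) Your fallback ``no factorization needed, only a variance bound'' is not available under the stated hypotheses: $\conditionMX{p}$ is a Laplace-functional factorization, not a covariance or mixing-rate condition, so a variance bound across the $m$ dependent blocks cannot be extracted; the in-probability statement must go through the factorization. (iii) The sandwich at thresholds $\lambda^{\pm1}x_b$ is only literally valid when $g(c\bfx)$ is monotone in the scale $c$; for general $g\in\mathcal{G}_+(\tilde{\ell}^p)$ one needs a scaling-continuity estimate, e.g.\ for Lipschitz $f$ the bound $|f(\bfx/t)-f(\bfx)|\le C\,|t^{-1}-1|\,(\|\bfx\|_p\wedge\|f\|_\infty)\,\1(\|\bfx\|_p>c_f/t_0)$ combined with the $M_0$ portmanteau theorem, which is precisely how the paper closes this step; your remarks point at the right tools but do not carry the argument out.
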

{By virtue of Proposition~\ref{lem:representation} we can derive 
 the same spectral cluster statistic by
letting the functionals $g_p:\tilde{\ell}^p \to \mathbb{R}$ act on $\bfQ^{(p)}$ for different pairs $(p,g_p)$.
This opens the road to different ways to estimate the same constant, for example, $c(q)$ for $q > 0$.  
To compare inference procedures tuned with different $p$, we observe that Theorem~\ref{cor:consistency} promotes the use of order statistics 
of the sample of $\ell^p$--norms. 
The sequence $(k_n)$ in \eqref{eq:consistent_estimator2} corresponds to the number of extreme blocks used for inference. The large deviation principles of Theorem~\ref{thm:main:theorem} allow us then to compare the sequences $k_n = k_n(p)$.
For inference through $\bfQ^{(p)}$ the relation
\beam\label{eq:k} 
k_n\;=\;[ m_n \P(\|\bfB\|_p > x_{b_n})] \;\sim\; c(p)\, n\, \P(|\bfX_0| > x_{b_n})\,,\,
\eeam
justifies taking $k_n$ larger as $p$ decreases, for $p \in (\alpha,\infty]$, since $c(\cdot)$ is a non-increasing function of $p$, and $(x_n)$ is a sequence satisfying $\anticlust$ and $n\P(|\bfX_0| > x_n) \to 0$}. For $p \in (0,\alpha]$, the sequence $(x_n)$ must satisfy the
additional condition $\conditionsmallsetsp{p}$, which restricts  the range of possible values for $k_n$, but allows us to consider continuous functionals on $(\tilde\ell^p,\tilde d^p)$. One advantage of choosing $p =\a$ is that $c(\alpha)=1$, thus the choice of $k_n = k_n(\alpha)$  
does not rely on the serial dependencies
summarized in $c(p)$.
\par 
\subsection{Applications}
In this section we apply Theorem~\ref{cor:consistency} for inference on some indices related to the extremes in a time-dependent sample and focus on cluster inference using $\bfQ^{(\a)}$. We illustrate our 
estimators 
for  a \regvary\ linear process in Section~\ref{ex:AR}.  
\subsubsection{The extremal index}
The extremal index of a regularly varying stationary time series has interpretation as a measure of clustering of serial exceedances, and was originally introduced in Leadbetter \cite{leadbetter:1983} and Leadbetter {\it et al.} \cite{leadbetter:lindgren:rootzen:1983}. If $(\bfX^\prime_t)$ is iid with the same marginal \ds\  as $(\bfX_t)$ then the extremal index $\theta_{|\bfX|}$ relates the expected number of serial exceedances of $(|\bfX_t|)$ with the serial exceedances of $(|\bfX^\prime_t|)$.  Assuming $\anticlust$ and additional mixing assumptions (see e.g. Theorem 2.3. in \cite{buritica:mikosch:meyer:wintenberger}), the extremal index $\theta_{|\bfX|}$ of $(|\bfX_t|)$ exists and equals $c(\infty)$. 
\par 
 We aim at applying Theorem~\ref{cor:consistency} with $p=\a$. In this setting, the change-of-norm formula in \eqref{eq:dec10a} leads to the identities
\beao
\theta_{|\bfX|} \;=\; c(\infty) \;=\; \E\Big[\dfrac{\| \bfTh \|^\alpha_\infty}{\| \bfTh \|^\alpha_\alpha}\Big]\;=\,\E[\| \bfQ^{(\alpha)} \|^\alpha_\infty].
\eeao
Then, letting $p=\a$ and  $g(\bfx)=\big({\|\bfx\|_\infty^\a}/{\|\bfx\|_\alpha^\a}\big) \1(\|\bfx\|_\alpha>1)$ on the right-hand side of \eqref{eq:consistent_estimator2}, {we obtain}
\beao
\int_{0}^{\infty} \E\big[g(y \bfQ^{(\a)})\big] d(-y^{-\alpha})
&=&\int_{0}^{\infty} \E\Big[\dfrac{\|\bfQ^{(\a)}\|_\infty^\a}{\|\bfQ^{(\a)}\|_\alpha^\a} \1(\|\bfQ^{(\a)}\|_\alpha^\a>y^{-\a})\Big] d(-y^{-\alpha})\\
&=& \E[\| \bfQ^{(\alpha)} \|^\alpha_\infty]\;= \; c(\infty)\,.
\eeao
{Next we introduce} a new consistent disjoint blocks estimator of the extremal index defined from exceedances of $\ell^\alpha$-norm blocks.
\begin{corollary}\label{cor:extremal_index}
Assume the conditions of Theorem~\ref{cor:consistency} for $p=\a$. 
Then 
 \beam\label{eq:estimate:thet}
      \frac{1}{k} 
\sum_{t=1}^m  \frac{\|\bfB_t\|_\infty^\alpha}{\|\bfB_t\|^\alpha_\alpha} \,  
\1(\|\bfB_t\|_\alpha > \|\bfB\|_{\alpha,(k+1)} )\stp c(\infty) \,,\quad \nto\,.
\eeam
\end{corollary}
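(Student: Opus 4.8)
The plan is to read off \eqref{eq:estimate:thet} as the special case $p=\alpha$ and
\[
g(\bfx)=\frac{\|\bfx\|_\infty^\alpha}{\|\bfx\|_\alpha^\alpha}\,\1(\|\bfx\|_\alpha>1),\qquad g(\bfzero):=0,
\]
of Theorem~\ref{cor:consistency}. So the first step is to verify its hypotheses for $p=\alpha$: the conditions of Theorem~\ref{thm:main:theorem} and $\conditionMX{\alpha}$ hold by assumption, and $c(\alpha)=1<\infty$ by Theorem~\ref{thm:main:theorem}. It then remains to check that $g\in\mathcal{G}_+(\tilde{\ell}^\alpha)$ and that $g$ is a cluster functional. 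Since $\|\cdot\|_\infty$ and $\|\cdot\|_\alpha$ are shift-invariant, $g$ descends to $\tilde{\ell}^\alpha$; it is non-negative and vanishes on the neighbourhood $\{\|\bfx\|_\alpha\le 1\}$ of the origin; and because $\bfx\mapsto\|\bfx\|_\alpha$ and $\bfx\mapsto\|\bfx\|_\infty$ are continuous on $\ell^\alpha$ with the denominator bounded away from $0$ on $\{\|\bfx\|_\alpha>1\}$, one has $D(g)\subseteq\{\|\bfx\|_\alpha=1\}$. As $\|\bfQ^{(\alpha)}\|_\alpha=1$ a.s.\ and $Y$ is Pareto$(\alpha)$, $\P(Y\bfQ^{(\alpha)}\in D(g))\le\P(Y=1)=0$. (One also notes $g\le 1$ since $\|\bfx\|_\infty\le\|\bfx\|_\alpha$, which is harmless.)

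Next I would identify the two sides of \eqref{eq:consistent_estimator2} with those of \eqref{eq:estimate:thet}. The ratio $\|\bfx\|_\infty^\alpha/\|\bfx\|_\alpha^\alpha$ is $0$-homogeneous, so for $c>0$, $g(c\bfx)=\big(\|\bfx\|_\infty^\alpha/\|\bfx\|_\alpha^\alpha\big)\1(\|\bfx\|_\alpha>c^{-1})$; taking $c=\|\bfB\|_{\alpha,(k)}^{-1}$ gives
$g\big(\|\bfB\|_{\alpha,(k)}^{-1}\bfB_t\big)=\big(\|\bfB_t\|_\infty^\alpha/\|\bfB_t\|_\alpha^\alpha\big)\1(\|\bfB_t\|_\alpha>\|\bfB\|_{\alpha,(k)})$, so the left-hand side of \eqref{eq:consistent_estimator2} is precisely the statistic in \eqref{eq:estimate:thet}. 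For the right-hand side, using $\|\bfQ^{(\alpha)}\|_\alpha=1$ a.s.\ one gets $g(y\bfQ^{(\alpha)})=\|\bfQ^{(\alpha)}\|_\infty^\alpha\,\1(y>1)$, whence
\[
\int_0^\infty\E\big[g(y\bfQ^{(\alpha)})\big]\,d(-y^{-\alpha})=\E\big[\|\bfQ^{(\alpha)}\|_\infty^\alpha\big]\int_1^\infty d(-y^{-\alpha})=\E\big[\|\bfQ^{(\alpha)}\|_\infty^\alpha\big].
\]
Finally, by Proposition~\ref{lem:representation} with $c(\alpha)=1$ we have $\bfQ^{(\alpha)}\eqd\bfTh/\|\bfTh\|_\alpha$, so $\E[\|\bfQ^{(\alpha)}\|_\infty^\alpha]=\E[\|\bfTh\|_\infty^\alpha/\|\bfTh\|_\alpha^\alpha]=c(\infty)$ by \eqref{eq:dec10a} (with $p=\infty$). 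Combining these identifications with \eqref{eq:consistent_estimator2} yields \eqref{eq:estimate:thet}.

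There is essentially no hard step here: the corollary is a clean specialization of Theorem~\ref{cor:consistency}, and the only part needing an argument is the discontinuity bookkeeping for $g$, which collapses to $\P(Y=1)=0$. The one point to be a little careful about is that the representation $\bfQ^{(\alpha)}\eqd\bfTh/\|\bfTh\|_\alpha$ — and hence $\E[\|\bfQ^{(\alpha)}\|_\infty^\alpha]=c(\infty)$ — is available; this is guaranteed under the assumptions of Theorem~\ref{thm:main:theorem} via Proposition~\ref{lem:representation} (equivalently Proposition~\ref{prop:limitSI} together with $\anticlust$, which forces $\|\bfTh\|_\alpha<\infty$ a.s.\ by Proposition~\ref{prop:Janssenequivalences}), so no extra hypothesis beyond those of Theorem~\ref{cor:consistency} with $p=\alpha$ is required.
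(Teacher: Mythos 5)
Your proposal is correct and follows essentially the same route as the paper: the corollary is obtained by specializing Theorem~\ref{cor:consistency} to $p=\alpha$ with $g(\bfx)=\big(\|\bfx\|_\infty^\alpha/\|\bfx\|_\alpha^\alpha\big)\1(\|\bfx\|_\alpha>1)$, evaluating the limit via $\|\bfQ^{(\alpha)}\|_\alpha=1$ a.s.\ and the identity $\E[\|\bfQ^{(\alpha)}\|_\infty^\alpha]=\E[\|\bfTh\|_\infty^\alpha/\|\bfTh\|_\alpha^\alpha]=c(\infty)$ from Proposition~\ref{lem:representation} and \eqref{eq:dec10a}. Your extra bookkeeping (shift-invariance, the discontinuity set reducing to $\P(Y=1)=0$, and the $0$-homogeneity argument matching the empirical sum to \eqref{eq:consistent_estimator2}) is consistent with, and slightly more explicit than, the paper's treatment.
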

An advantage of inferring the extremal index using extremal $\ell^\a$--blocks is that the tuning parameter $k_n$ of the estimator does not rely on the clustering effect of the series since $c(\alpha)=1$ in Equation \eqref{eq:k}.

\bre 
{ We can compare this estimator of $c(\infty)$ with 
one based on the clusters (of exceedances). Motivated by the blocks estimator 
of the extremal index
in Hsing \cite{hsing:1993}, we let $g(\bfx) := \sum_{j \in \mathbb{Z}}\1(|\bfx_t|>1)$ act on large $\ell^\infty$--blocks.} {Choosing} $p=\infty$ and 
using this $g$ on the right-hand side of \eqref{eq:consistent_estimator2}, 
we {can find} an integer sequence $k=k_n(\infty) \to \infty $ such that
    \beam\label{eq:extremal:index:2}
     \Big(\frac{1}{k}
    \sum_{t=1}^n \1(|\bfX_t| >  \|\bfB\|_{\infty,( k+1  )}) \Big)^{-1}\stp c(\infty)  \,,\quad \nto\,.
    \eeam 
    Arguing as for \eqref{eq:k},  $k_n \sim  m_n\P(\|\bfB\|_\infty > x_b)  \sim  c(\infty)\, n\P(|\bfX_0| > x_b)$.
    Thus, 
the number of extreme blocks {used in}
\eqref{eq:extremal:index:2} shrinks when $c(\infty) < 1$, compared to its implementation in an iid setting. In practice, this can make the choice of $k_n$ sensitive to the temporal ties. 
\ere 


\subsubsection{A cluster index for sums}\label{sec:sums}
In this section we assume that $\a\in (0,2)$ and $\E[\bfX]=\bf0$ for $\a\in (1,2)$.
We study the partial sums 
$\bfS_n := \sum_{t=1}^{n}\bfX_t$, $n\ge 1$, and introduce a normalizing \seq\
$(a_n)$ \st\ $n\,\P(|\bfX_0|>a_n)\to 1$.
Starting with Davis and Hsing \cite{davis:hsing:1995},
$\a$-stable central limit theory for $(\bfS_n/a_n)$ was proved under
suitable anti-clustering and mixing conditions.
\par 
In this setting, the quantity $c(1)$ appears naturally and was coined cluster index in Mikosch and Wintenberger \cite{mikosch:wintenberger:2014}. For $d=1$ it can be interpreted as an equivalent of the extremal index for partial sums rather 
than maxima. 
{ Indeed, consider a real-valued  \regvary\ stationary \seq\ 
$(X_t)$ with index of regular variation $\a\in(0,2)$ satisfying $\P( X \le -x) = o\big(\P(X > x) \big)$ or $X\eqd -X$. Consider an iid \seq\ $(X_t')$ with $X\eqd X'$ and partial sums $(S_n')$. Then 
$a_n^{-1} S_n\std \xi_\a$ and $a_n^{-1}S_n'\std \xi_\a'$, both $\xi_\a$ and $\xi_ \a'$ are $\a$-stable and }
\beao 
    \E[\ex^{i u\xi_\alpha }] &=&   \big( \E[\ex^{i u\xi_\alpha' }]\big)^{c(1)} \,.
\eeao
\par 
{Under the assumptions of Proposition \ref{lem:representation} and for $p=\a$ we have $c(1) = \E[\|\bfQ^{(\alpha)}\|_1^\alpha]$. For $\a \in (0,1]$, take $p=\a$ and $g(\bfx)=\big({\|\bfx\|_1^{\a}}/{\|\bfx\|_\a^\a} \big) \1(\|\bfx\|_\a>1)$ on the right-hand side of \eqref{eq:consistent_estimator2}. Then an application of Theorem~\ref{cor:consistency} with $p=\a$ and $g$ as mentioned yields a consistent estimator of $c(1)$.
\begin{corollary}\label{cor:stable:limit}
We assume the conditions of Theorem~\ref{cor:consistency} for $p=\a$ and $\a \in (0,1]$. Then we have for $k=k_n\to \infty$,
\beam\label{eq:c11}
   \frac{1}{k}\sum_{t=1}^m \frac{\|\bfB_t\|_1^\alpha}{\|\bfB_t\|^\alpha_\a}\, \1(\|\bfB_t\|_\a > \|\bfB\|_{\a,( k +1 )}) \stp c(1)\,,\quad \nto\,.
\eeam
\end{corollary}
The estimator on the \lhs\ of \eqref{eq:c11} 
has the advantage that $k_n \sim \,n\,\P(|\bfX_0| > x_{b_n})$. Relation \eqref{eq:c11} holds by virtue of \eqref{eq:k} regardless of the temporal dependence in the series.
\bre  For $\alpha \in (1,2)$ the function $g$ 
applied in \eqref{eq:c11} to extremal $\ell^\a$-blocks is no longer bounded. If $c(1) < \infty$ we can apply Theorem~\ref{cor:consistency} with $p=1$ and $g(\bfx)=\big({\|\bfx\|_\a^\a}/{\|\bfx\|_1^\a} \big) \1(\|\bfx\|_1>1)$ 
to obtain a consistent estimator of $c(1)$. Indeed,   the right-hand side of \eqref{eq:consistent_estimator2} turns into
\beao
\int_{0}^{\infty} \E\big[g(y \bfQ^{(1)})\big] d(-y^{-\alpha})
&=&\int_{0}^{\infty} \E\Big[\dfrac{\|\bfQ^{(1)}\|_\a^\a}{\|\bfQ^{(1)}\|_1^\a} \1(\|\bfQ^{(1)}\|_1^\a>y^{-\a})\Big] d(-y^{-\alpha})\\
&=& \E[\| \bfQ^{(1)} \|^\alpha_\a]=(c(1))^{-1}\,,
\eeao
where the last identity follows from Proposition~\ref{lem:representation}. Then Theorem~\ref{cor:consistency} for $p=1$ and $g$ as above yields a consistent estimator of $c(1)$. 
Note that $c(1) \in [1,\infty)$ for $\alpha \in (1,2)$. Hence the number $k_n$ of extremal $\ell^1$-blocks for this estimator does not decrease in comparison with the iid case. This feature can also make this estimator robust for cluster inference.
\ere
\bre 
Arguing as in Cissokho and Kulik \cite{cissokho:kulik:2021},  
Kulik and Soulier \cite{kulik:soulier:2020}, and  assuming $\conditionsmallsetsp{1}$,
we can extend Theorem~\ref{cor:consistency} for $p=\infty$ to hold for $\ell^1$-functionals. 
Then we can find $k=k_n(\infty)\to\infty$ \st, with $g(\bfx) := \1(\|\bfx\|_1 > 1)$ and $p=\infty$
in \eqref{eq:consistent_estimator2},
    \beam\label{eq:c12}
    \frac{\sum_{t=1}^m  \1(\|\bfB_t\|_1 >  \|\bfB\|_{\infty,(k+1)})}{ \sum_{t=1}^n  \1(|\bfX_t| >  \|\bfB\|_{\infty,(k+1 )}) }\stp c(1) \,,\quad \nto\,.
    \eeam
   Here, following \eqref{eq:k}, we have $k_n \sim   c(\infty)\, n\P(|\bfX_0| > x_b).
$ This alternative estimator of $c(1)$ based on extremal $\ell^\infty$-blocks is consistent for $\alpha\in(0,2)$. Then, as in the extremal index example, the tuning parameter $k_n$ in \eqref{eq:c12} is linked to the constant $c(\infty) \in (0,1]$ and must be chosen carefully in agreement with the clustering effect of the series. 
\ere
   \par 
 {Theorem~\ref{cor:consistency} provides estimators of 
the parameters of the $\a$-stable limit $\bfxi_\alpha$ of $(\bfS_n/a_n)$. Indeed, following the theory in
 Bartkiewicz {\it et al.} \cite{bartkiewicz:jakubowski:mikosch:wintenberger:2011}, we characterize the $\a$-stable limit in terms of $\bfQ^{(1)}$; the proof is given in Section \ref{proof:stable}.}
\bpr\label{prop:stable:limit}
Consider a stationary \regvary\ \seq\ $(\bfX_t)$ with index $\alpha\in(0,1)\cup(1,2)$.
We assume  the mixing condition
\beao
\E[ \ex^{ i \bfu^\top  \bfS_{n}/a_n  } ] = (\E[ \ex^{ i \bfu^\top  \bfS_{{b_n}}/a_n  } ])^{m_n}  +o(1)\,,\qquad \nto\,,\quad \bfu\in\bbr^d\,,
\eeao and the
anti-clustering condition, 
for every $\delta > 0$,
\beam\label{eq:cond:small:sets:stable}
\lim_{l\to \infty }\limsup_{n \to \infty} n\,\mbox{$\sum_{t=l}^{b_n}$} \E[(|\bfX_t/a_n|\wedge\delta)\,(|\bfX_0/a_n| \wedge \delta)] = 0\,.
\eeam
Then $\bfS_n/a_n\std \bfxi_\a$ for an $\a$-stable random vector $\bfxi_\a$ 
with \chf\  
    $\E[\exp (i \bfu^\top \bfxi_\alpha )] 
    =  
     \exp(-c_\alpha\, \sigma_\alpha(\bfu)\,( 1- i\, \beta(\bfu)\tan( \alpha \pi/2)) )$, $\bfu\in\bbr^d$, 
where $c_\alpha := (\Gamma(2-\alpha)/|1-\alpha|)(1 \land \alpha) \cos(\alpha \pi/2)$,
and the \textit{scale} and \textit{skewness} parameters have \rep
\beao
    \sigma_\alpha(\bfu) &:=& c(1)\,\E[ |\bfu^\top \mbox{$\sum_{t \in \mathbb{Z}}$}\bfQ^{(1)}_t|^\alpha]\,, \,\\
    \beta(\bfu) &:=&  \big(\E[(\bfu^\top \mbox{$\sum_{t \in \mathbb{Z}}$}\bfQ^{(1)}_t)^\alpha_{+} - (\bfu^\top \mbox{$\sum_{t \in \mathbb{Z}}$}\bfQ^{(1)}_t)^\alpha_{-}]\big)/\E[ |\bfu^\top \mbox{$\sum_{t \in \mathbb{Z}}$}\bfQ^{(1)}_t|^\alpha]\,.
\eeao
\epr
As for $c(1)$, an application of Theorem~\ref{cor:consistency} with $p=1$ for $\alpha\in (1,2)$ and $p=\a$ for $\a\in(0,1)$ yields natural estimators of the parameters $(\sigma_\alpha(\bfu), \beta(\bfu))$ in the central limit theorem of Proposition~\ref{prop:stable:limit}.

\section{{A discussion} of the assumptions of the large deviation principle in Theorem~\ref{thm:main:theorem}}\setcounter{equation}{0}\label{sec:large:deviation}\label{subsec:assumptions}

Consider a stationary \seq\ $(\bfX_t)$
satisfying $\rvalpha$ and let $(x_n)$ be a  {threshold} \seq s \st\ 
$n\,\P(|\bfX_0|>x_n)\to 0$. 
In the conditions $\anticlust$ and $\conditionsmallsets{p}$ below 
we refer to the same \seq\ $(x_n)$.  {In this section we will discuss the conditions of} Theorem~\ref{thm:main:theorem}.
\par 

\subsection{\bf Anti-clustering condition $\anticlust$.}  For every $\delta>0$, 
\beao 
\lim_{\kto}\limsup_{\nto}
\P\big(\|\bfX_{[k,n]}\|_\infty> \delta \,x_n \mid  \,|\bfX_0|> \delta\, x_n \big)=0.\,
\eeao
Condition $\anticlust$ ensures that a large value at present time
does not persist indefinitely in the extreme future of the \ts . 
This anti-clustering is weaker than the more common two-sided one:
\beam\label{eq:twosided} 
\lim_{k\to\infty}\limsup_{\nto}
\P\big(\max_{k\le |t|\le n}|\bfX_t|> \delta \,x_n \mid  \,|\bfX_0|> \delta\, x_n \big)=0.\,
\eeam
A simple sufficient condition, which breaks block-wise extremal dependence into pair-wise, is given by 
\beao
\lim_{\kto}\limsup_{\nto} \sum_{t=k}^{n} 
\P\big(|\bfX_{t}|> \delta \,x_n \mid  \,|\bfX_0|> \delta\, x_n \big)\,.
\eeao
For $m$-dependent $(\bfX_t)$ the latter condition turns into 
$n\,\P(|\bfX_0|>\delta\,x_n)\to 0$ which is always satisfied. 
\par
If $p\le \alpha$ an extra assumption is required for controlling the accumulation of moderate extremes within a block.
\subsection{\bf Vanishing-small-values condition $\conditionsmallsetsp{p}$.} For $p\in( 0,\a]$ 
we assume that for a \seq\ $(x_n)$ satisfying $n\P(|\bfX_0|>x_n)\to 0$ and
for every $\delta>0$, we have
\beam\label{eq:june2aa}
\lim_{\epsilon \to 0}\limsup_{n \to \infty}
\frac{\P\big( \big| \big\|\ov {x_n^{-1}\bfX_{[1,n]}}^\epsilon\big\|_p^p -\E\big[\big\|\ov {x_n^{-1}\bfX_{[1,n]}}^\epsilon\big\|_p^p\big]\big|>\delta\big)}
{ n\,\P(|\bfX_0| >  x_n )  } = 0.
\eeam
\par
{
We refer to \eqref{eq:june2aa} as condition $\conditionsmallsetsp{p}$ in what follows. If $\alpha<p<\infty$ then by Karamata's theorem (see Bingham {\it et al.} \cite{bingham:goldie:teugels:1987}) and since $n\,\P(|\bfX_0|>x_n)\to 0$,
\beao
\E[\|\ov {x_n^{-1}\bfX_{[1,n]}}^\epsilon|_p^p]=n\,\E[|\ov{x_n^{-1}\bfX}^\epsilon|^p]=o(1)\,,\quad \nto\,.
\eeao
Also, if $p<\a$, then $\E[|\bfX|^p]<\infty$. If  we also have $n/x_n^p\to 0$ then
\beao 
\E[\|\ov {x_n^{-1}\bfX_{[1,n]}}^\epsilon\|_p^p]\le n\,x_n^{-p}\E[|\bfX|^p]\to 0\,,\quad\nto\,.
\eeao
If $p=\a$, $\E[|\bfX|^\a]<\infty$ and $n/x_n^\a\to 0$ then the latter
relation remains valid. If $\E[|\bfX|^\a]=\infty$ then {$\E [|\ov{x_n^{-1}\bfX}^\epsilon|^\a]=x_n^{-\a}\,\ell(x_n)$ for some \slvary\ \fct\ $\ell$ depending on $\epsilon$}, hence for every small $\kappa>0$ and large $n$, $\ell(x_n)\le x_n^{\kappa}$. Then the condition 
$n x_n^{-\a+\kappa}\to 0$ also implies that $\E[\|\ov {x_n^{-1}\bfX_{[1,n]}}^\epsilon\|_\a^\a]=o(1)$. {Thus we retrieve $\conditionsmallsets{p}$ as {used} in Theorem~\ref{thm:main:theorem}.
In sum, 
under the aforementioned additional
growth conditions on $(x_n)$ centering in \eqref{eq:june2aa} can be avoided. This is similar  to condition $\conditionsmallsets{p}$ in Theorem~\ref{thm:main:theorem}.}
\par 
We mentioned that conditions of
a similar type as $\conditionsmallsetsp{p}$ are standard when dealing with sum \fct als acting on $(\bfX_t)$ 
(see for example  Davis and Hsing \cite{davis:hsing:1995}, Bartkiewicz {\it et al.} \cite{bartkiewicz:jakubowski:mikosch:wintenberger:2011},  Mikosch and Wintenberger 
\cite{mikosch:wintenberger:2013,mikosch:wintenberger:2014,mikosch:wintenberger:2016}), and are also discussed in Kulik and Soulier \cite{kulik:soulier:2020}. 
\bre\label{rem:may27}
Assume $\alpha<p<\infty$. Then applications of  Markov's inequality of order $1$ and Karamata's theorem  yield for $\delta>0$, as $\nto$,
\beao
\dfrac{\P\big(  \|\ov {x_n^{-1}\bfX_{[1,n]}}^\epsilon\|^p_p > \delta\big)}{ n\,\P(|\bfX_0| > x_n )} &=&
\dfrac{\P\big(\sum_{t=1}^n |\ov {x_n^{-1}\bfX_t}^\epsilon|^p > \delta \big)}{ n\,\P(|\bfX_0| > x_n )   }\\&\le & 
\dfrac{\E [|\ov{x_n^{-1}\bfX_0}^\epsilon|^p]}{\delta\,\P(|\bfX_0|> \epsilon \, x_n)}
\dfrac{\P(|\bfX_0|>\epsilon\,x_n)}{\P(|\bfX_0|>x_n)}\to  c\,\epsilon^{p-\a}\,.
\eeao
The \rhs\ converges to zero as $\epsilon\to 0$. Here and in what follows,
$c$ denotes any positive constant whose value is not of interest. We conclude
that \eqref{eq:june2aa} is automatic for $p>\a$.
\ere  
\bre 
Condition $\conditionsmallsetsp{p}$ is challenging to check for $p\le \a$. For $p/\a \in (1/2,1]$,
by \v Cebyshev's inequality, 
\beao
\lefteqn{\P\big( \big| \big\|\ov{x_n^{-1}\bfX_{[1,n]}}^\epsilon\big\|_p^p - \E[\big\|\ov{x_n^{-1}\bfX_{[1,n]}}^\epsilon\big\|_p^p]\big|> \delta \big)/[ n\,\P(|\bfX_0| >  x_n )]}\\
    &\le & \delta^{-2}\var \big(\big\|\ov {x_n^{-1}\bfX_{[1,n]}}^\epsilon\big\|_p^{p}\big)/[n\,\P(|\bfX_0|>x_n)]\\
&\le& \delta^{-2} \dfrac{\E[|\ov{x_n^{-1}\bfX_0}^\epsilon|^{2p}\big]}{\P(|\bfX_0| > x_n)}\,\Big[1 +2\sum_{h=1}^{n-1}|\corr\big(|\ov{x_n^{-1}\bfX_0}^\epsilon|^{p},\ov{|x_n^{-1}\bfX_h}^\epsilon|^{p}\big)|\Big]\,.
\eeao
Now assume that  $(\bfX_t)$ is $\rho$--mixing with 
summable rate \fct\ $(\rho_h$); cf.
Bradley \cite{bradley:2005}. Then the \rhs\ is bounded by
\beao
\delta^{-2} \,\dfrac{\E[|\ov{x_n^{-1}\bfX}^\epsilon|^{2p}\big]}{\P(|\bfX_0| > x_n)}\,\Big[1+2\sum_{h=1}^\infty \rho_h\Big]\sim 
\delta^{-2}\epsilon^{2p - \alpha}\Big[1+2\sum_{h=1}^\infty \rho_h\Big]\,,\qquad \epsilon\to 0\,,
\eeao
where we applied Karamata's theorem in the last step, and 
$\conditionsmallsetsp{p}$ follows. For Markov chains
weaker assumptions such as the 
drift condition $({\bf DC})$
in Mikosch and Wintenberger \cite{mikosch:wintenberger:2014,mikosch:wintenberger:2016}
 can be used for checking $\conditionsmallsetsp{p}$. 
\ere
\bre\label{rem:xn}
Condition $\conditionsmallsetsp{p}$ not only restricts the serial  
dependence of the time series $(\bfX_t)$ but also the level of 
thresholds $(x_n)$. Indeed, for $p/\a<1/2$ and $(\bfX^\prime_t)$ iid, since $\big(\| n^{-1/2} \bfX^\prime_{[1,n]} \|_p^p-\E[\| n^{-1/2} \bfX^\prime_{[1,n]} \|_p^p]\big)$ 
converges in \ds\ to a Gaussian limit by virtue of the \clt , $\conditionsmallsetsp{p}$  implies 
 necessarily that $x_n/\sqrt n\to \infty$ as $n\,\P(|\bfX_0|>x_n)\to0$.
 \ere
 \subsection{Threshold condition}\label{subsec:Moderate:threshold}
{
In Theorem~\ref{thm:main:theorem} {we assume} growth conditions on $(x_n)$: $n/x_n^{p} \to 0$ if $p<\alpha$ and $n/x_n^{\alpha - \kappa}\to 0$ for some $\kappa > 0$ if $p=\a$. 
\par 
For inference purposes it is tempting to decrease the threshold level $x_n$ 
{\st\ more exceedances are included in the estimators.} 
Indeed, the assumptions on $(x_n)$ can be {relaxed}, justified by results such as Nagaev's large deviation principle in \cite{nagaev:1979}, by adding a centering term as we will show in Lemma~\ref{prop2:spt3:finite_constant}. However, in this section we aim at pointing at the difficulties that might arise while doing so in practice. }
\par 
To motivate the results of this section we start by considering an
iid \seq\ $(\bfX_t)$ satisfying   $\rvalpha$ for some $\a>0$. Then, for 
$p> \a$, \eqref{eq:ld} holds with limit $c(p)=1$ and 
$S_n^{(p)}= \sum_{t=1}^n|\bfX_i|^p$ has infinite expectation.
If $p<\a$ the process $(S_{n}^{(p)})$ 
has finite expectation
and by the law of large numbers, for $n/x_n^p\to 0$,
\beam\label{eq:moddev}
\lefteqn{\P\big(\|\bfX_{[0,n]}\|_p > x_n\,(n\,x_n^{-p}\,\E[|\bfX|^p]+1)^{1/p}\big)}\nonumber\\
&=&\P\big( S_{n}^{(p)}-\E[S_{n}^{(p)}] > x_n^p (1+o(1))\big)\to 0\,.
\eeam
Following Nagaev \cite{nagaev:1979},
a \ld\ result for the centered process holds: 
\beao
 \P\Big(S_{n}^{(p)} -\E[S_{n}^{(p)}] >x_n^p\Big)
\sim n \,\P(|\bfX_0|>x_n)\,,\quad \nto\,,
\eeao 
provided $n/x_n^{\a-\kappa}\to 0$ for $p/\a\in (1/2,1)$ and some $\kappa>0$, and $\sqrt{n\,
\log n}/x_n^p\to0$ for $p/\a<1/2$. These conditions are {satisfied} for extreme thresholds satisfying $n/x_n^p \to 0$. In this case the centering term $\E[S_{n}^{(p)}]$ in \eqref{eq:moddev} is always negligible which allows us to derive \eqref{eq:ld}. Next, we extend the previous ideas to regularly varying time series.
\par 
\begin{lemma}\label{prop2:spt3:finite_constant} Consider an $\bbr^d$-valued 
stationary process $(\bfX_t)$ satisfying the conditions 
$\rvalpha$, $\anticlust$, $\conditionsmallsets{p}$ and
$c(p)<\infty$ for some $p>0$. If $p<\a$ then 
\beam\label{eq2:cp}
 \lim_{n \to  \infty} \frac{\P\big(\|\bfX_{[0,n]}\|_p>x_n\,\big( n\,x_n^{-p}\,\E[|\bfX|^p] +1\big)^{1/p}\big)}{ n \,\P(|\bfX_0| > x_{n}) } = c(p)\,.
\eeam
If $p=\a$ then 
\beam\label{eq2:cpa}
 \lim_{n \to  \infty} \frac{\P\big(\|\bfX_{[0,n]}\|_{\a}>x_{n} \, \big(n\E[|\overline{\bfX/x_{n}}^1|^\alpha] + 1 \big)^{1/\a}\big)}{n \,\P(|\bfX_0| > x_{n}) } = c(\a) = 1\,.
\eeam
Moreover, if also $\E[|\bfX|^\alpha] < \infty$ then equation $(\ref{eq2:cp})$ holds  for $p = \alpha$.
\end{lemma}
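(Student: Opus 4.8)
The plan is to re-run the proof of Theorem~\ref{thm:main:theorem}, this time keeping the deterministic centering that was discarded there. Fix $\epsilon\in(0,1)$ and split
\[
\begin{gathered}
\|\bfX_{[0,n]}\|_p^p=L_n(\epsilon)+M_n(\epsilon),\\
L_n(\epsilon):=\sum_{t=0}^n|\bfX_t|^p\1(|\bfX_t|>\epsilon x_n),\qquad
M_n(\epsilon):=\sum_{t=0}^n|\bfX_t|^p\1(|\bfX_t|\le\epsilon x_n),
\end{gathered}
\]
so that $M_n(\epsilon)=x_n^p\,\|\ov{x_n^{-1}\bfX_{[0,n]}}^\epsilon\|_p^p$ and $\E[M_n(\epsilon)]=(n+1)\,\E[|\bfX_0|^p\1(|\bfX_0|\le\epsilon x_n)]$. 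Write $v_n:=n\,\P(|\bfX_0|>x_n)\to0$, and let $y_n$ be the threshold in the statement, so that $y_n^p=c_n+x_n^p$ with $c_n=n\,\E[|\bfX_0|^p]$ when $p<\alpha$ and $c_n=n\,\E[|\bfX_0|^\alpha\1(|\bfX_0|\le x_n)]$ when $p=\alpha$. Since $\{\|\bfX_{[0,n]}\|_p>y_n\}=\{L_n(\epsilon)>y_n^p-M_n(\epsilon)\}$, the whole argument reduces to replacing the random level $y_n^p-M_n(\epsilon)$ by $(1+o(1))x_n^p$ up to events of probability $o(v_n)$, and then reading off the behaviour of $L_n(\epsilon)$ from the proof of Theorem~\ref{thm:main:theorem}.

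First I would reconcile the two centerings. Condition $\conditionsmallsets{p}$ in the form \eqref{eq:june2aa} states precisely that $\P(|M_n(\epsilon)-\E[M_n(\epsilon)]|>\delta x_n^p)=o(v_n)$ after letting $n\to\infty$ and then $\epsilon\to0$, for every $\delta>0$. A Karamata/Potter estimate then yields $|c_n-\E[M_n(\epsilon)]|=o(x_n^p)$ for each fixed $\epsilon$: for $p<\alpha$ the difference equals $n\,\E[|\bfX_0|^p\1(|\bfX_0|>\epsilon x_n)]+O(1)\le C\epsilon^{p-\alpha}\,n x_n^p\P(|\bfX_0|>x_n)+O(1)$, and for $p=\alpha$ it equals $n\,\E[|\bfX_0|^\alpha\1(\epsilon x_n<|\bfX_0|\le x_n)]+O\big(\E[|\bfX_0|^\alpha\1(|\bfX_0|\le\epsilon x_n)]\big)\le C\epsilon^{-\alpha}\,n x_n^\alpha\P(|\bfX_0|>x_n)+o(x_n^\alpha)$; in both cases $n x_n^p\P(|\bfX_0|>x_n)=x_n^p v_n=o(x_n^p)$. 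Hence $y_n^p-\E[M_n(\epsilon)]=x_n^p+\big(c_n-\E[M_n(\epsilon)]\big)=(1+o(1))x_n^p$, and combining this with the concentration of $M_n(\epsilon)$ one obtains, by a routine sandwiching, for every $\eta>0$ and all large $n$,
\begin{multline*}
\P\big(L_n(\epsilon)>(1+\eta)x_n^p\big)-o(v_n)\ \le\ \P\big(\|\bfX_{[0,n]}\|_p>y_n\big)\\
\le\ \P\big(L_n(\epsilon)>(1-\eta)x_n^p\big)+o(v_n).
\end{multline*}

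Next I would treat the large‑values term $L_n(\epsilon)$. Once the summands below $\epsilon x_n$ have been removed, the event $\{L_n(\epsilon)>z\,x_n^p\}$, $z>0$, is governed by a single cluster of values exceeding $\epsilon x_n$, and its asymptotics is exactly what the proof of Theorem~\ref{thm:main:theorem} establishes: inspecting that proof, this step rests only on $\rvalpha$ and the anti‑clustering condition $\anticlust$ — via the blocking scheme, the telescoping‑sum argument compensating the two‑sided condition \eqref{eq:twosided}, and the polar weak convergence of Segers {\it et al.}~\cite{segers:zhao:meinguet:2017} — whereas the growth conditions on $(x_n)$ entered there solely to render $\E[M_n(\epsilon)]$ negligible, which we no longer require. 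Consequently $\lim_{\epsilon\to0}\limsup_{n\to\infty}\big|\P(L_n(\epsilon)>z\,x_n^p)/v_n-z^{-\alpha/p}c(p)\big|=0$ for each $z>0$; inserting $z=1\pm\eta$ into the sandwich and letting $n\to\infty$, then $\epsilon\to0$, then $\eta\to0$ gives \eqref{eq2:cp} and \eqref{eq2:cpa}, with $c(\alpha)=1$ by Theorem~\ref{thm:main:theorem}. For the last assertion, when $\E[|\bfX_0|^\alpha]<\infty$ the two centerings $n\,\E[|\bfX_0|^\alpha]$ and $n\,\E[|\bfX_0|^\alpha\1(|\bfX_0|\le x_n)]$ differ by $n\,\E[|\bfX_0|^\alpha\1(|\bfX_0|>x_n)]$, which is $o(x_n^\alpha)$ by Karamata's theorem together with the threshold restriction carried over from the discussion preceding the lemma; thus \eqref{eq2:cpa} transfers to \eqref{eq2:cp} at $p=\alpha$.

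The main obstacle is making the third paragraph rigorous, namely isolating inside the proof of Theorem~\ref{thm:main:theorem} the precise estimate for $\P(L_n(\epsilon)>z\,x_n^p)$ and checking that it holds uniformly for $z$ near $1$ with no use of the growth conditions on $(x_n)$ — i.e.\ that the choice of block length, the control via $\anticlust$ of clusters straddling block boundaries, and the identification of the limit constant all go through verbatim after truncating the summands below at level $\epsilon x_n$ — and then that the $\epsilon\to0$ limit of that constant equals $c(p)$, which is where $\conditionsmallsets{p}$ re‑enters in order to discard the moderate‑value contribution. The remaining bookkeeping — the Karamata bounds for $|c_n-\E[M_n(\epsilon)]|$ and for the final comparison of centerings, and keeping all error terms uniform along the iterated limits $n\to\infty$, $\epsilon\to0$, $\eta\to0$ — is routine.
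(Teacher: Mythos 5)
For \eqref{eq2:cp} with $p<\a$ and for \eqref{eq2:cpa} your proposal is, in substance, the paper's own proof: the paper sandwiches the centered probability between lower- and upper-truncated (and centered) parts, uses $\conditionsmallsets{p}$ in the centered form \eqref{eq:june2aa} to discard the small-value part, observes by Karamata's theorem that the centering attached to the part truncated below at $\epsilon x_n$ is negligible (so no growth condition on $(x_n)$ is needed), and then repeats the telescoping/$\anticlust$ computation from the proof of Lemma~\ref{prop:spt3:finite_constant}, which indeed uses only stationarity, $\rvalpha$ and $\anticlust$; your reconciliation of the centering $n\,\E[|\bfX_0|^p]$ (resp.\ $n\,\E[|\ov{\bfX_0/x_n}^1|^\a]$) with $\E[M_n(\epsilon)]$ is the same Karamata bookkeeping, carried out slightly more explicitly than in the paper.

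The genuine gap is in your treatment of the final assertion. There you need $n\,\E[|\bfX_0|^\a\1(|\bfX_0|>x_n)]=o(x_n^\a)$, and this does not follow from $\E[|\bfX_0|^\a]<\infty$ together with $n\,\P(|\bfX_0|>x_n)\to0$: at the critical index, $\E[|\bfX_0|^\a\1(|\bfX_0|>x_n)]$ equals $x_n^\a\P(|\bfX_0|>x_n)$ plus a slowly varying remainder, and the remainder can dominate. For instance, take $(\bfX_t)$ iid with $\P(|\bfX_0|>x)=x^{-\a}(\log x)^{-2}$ for large $x$ (so $\E[|\bfX_0|^\a]<\infty$, and $\anticlust$, $\conditionsmallsets{\a}$ hold for every $(x_n)$ with $n\,\P(|\bfX_0|>x_n)\to0$) and $x_n^\a=n(\log n)^{-3/2}$; then $n\,\E[|\bfX_0|^\a\1(|\bfX_0|>x_n)]\asymp n/\log n\gg x_n^\a$, so shifting the level by this amount is not harmless, and a single-big-jump computation indicates that the ratio in \eqref{eq2:cp} with $p=\a$ then tends to $0$ rather than to $1$. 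The ``threshold restriction carried over from the discussion preceding the lemma'' that you invoke is not among the hypotheses of Lemma~\ref{prop2:spt3:finite_constant}: dropping the growth conditions of Theorem~\ref{thm:main:theorem} is precisely the point of this lemma. So this last step requires an additional assumption (for example $n/x_n^{\a-\kappa}\to0$, or directly $n\,\E[|\bfX_0|^\a\1(|\bfX_0|>x_n)]=o(x_n^\a)$) or a separate argument; note that the paper itself gives no details here, saying only that the case $p=\a$ ``requires slight changes''.
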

The proof is given in Section \ref{subsec:lemmay}. Now the restrictions on the level of the thresholds $(x_n)$ are the ones implicitly implied by condition $\conditionsmallsetsp{p}$ in \eqref{eq:june2aa}; see Remark \ref{rem:xn}.
\par 
We define an auxiliary sequence of  levels:
\beao
    z_n :=z_n(p)= \begin{cases}x_n\,
         \big(n\,x_n^{-p}\E[|\bfX|^p]+ 1\big)^{1/p}    &\text{ if } p < \alpha,\\
           x_{n} \, \big(n\E[|\overline{\bfX/x_{n}}^1|^\alpha] + 1 \big)^{1/\a} &\text{ if } p = \alpha\,,\\
      x_{n}&\text{ if } p > \alpha\,. 
    \end{cases}
\eeao
For thresholds satisfying the growth conditions $n/x_n^p \to 0$ 
we have $z_n\sim x_n$, while for moderate thresholds satisfying $\conditionsmallsetsp{p}$ and
$n/x_n^p \to \infty$ this is no longer the case. 
\par 
For the purposes of inference Lemma~\ref{prop2:spt3:finite_constant} is not as satisfactory as \eqref{eq:ld} in Theorem~\ref{thm:main:theorem}. Indeed, 
the level $z_n$ in the selection of the exceedances is not 
the original threshold $x_n$.
For any moderate threshold $x_n$ with $z_n/x_n\to \infty$ 
the use of $x_n$ instead of $z_n$ might yield to a different limit. 
As a toy example, consider the problem of inferring the constant $c(q)/c(p)$ for
$p < \alpha$, $q > p$. Then an application of Lemma~\ref{prop2:spt3:finite_constant}  ensures that
\begin{align*}
    &\P( \|\bfX_{[0,n]}\|_q>z_n(q)   \,| \, \|\bfX_{[0,n]}\|_p > z_n(p))\\
        & \quad \quad \to \P(\|Y\bfQ^{(p)}\|_q >1) = \E[\|\bfQ^{(p)}\|^\alpha_q] = c(q)/c(p), \qquad \nto\,.
\end{align*}
However, choosing the same moderate threshold $z_n = z_n(q)$, we would have 
\beao
    \P( \|\bfX_{[0,n]}\|_q>z_n   \,| \, \|\bfX_{[0,n]}\|_p > z_n)
    &\sim&\dfrac{\P( \|\bfX_{[0,n]}\|_q>z_n )}{\P( n^{1/p}\E[|\bfX|^p]^{1/p} > z_n)} \\
        &\to& \begin{cases}
    1 & \text{ if } q < \alpha,\\ 
    0 & \text{ if } q > \alpha, \, \quad \nto\,.
    \end{cases}
\eeao 
{
{By this argument, the 
growth conditions on $(x_n)$ are justified to simplify inference procedures. Otherwise, the choice of the threshold \seq\ becomes delicate.} }

\section{Inference beyond shift-invariant functionals}\label{Declustering Methods in inference procedures}\label{sec:beyond}\setcounter{equation}{0}

So far we only considered inference for shift-invariant functionals acting on $(\tilde{\ell}^p, \tilde{d_p})$ such as maxima and sums. Following the shift-projection ideas in Janssen \cite{janssen:2019}, jointly with continuous mapping arguments, we extend inference to functionals on $(\ell^p, d_p)$.
\par 
\subsection{Inference for cluster functionals in $(\ell^p,d_p)$} 
Let $g:(\ell^p,d_p) \to \mathbb{R}$ be a bounded measurable function. We define the functional $\psi_g: (\wt{\ell}^{p},\wt{d}_p) \to  \mathbb{R}$  by 
\beam\label{eq:psif}
        [\bfz] \mapsto \psi_g([ \bfz]) &:=&\sum_{j \in \mathbb{Z}} |\bfz_{-j}^*|^\alpha g \big ((B^{j}\bfz_t^*)_{t\in\bbz}\big),
\eeam
where $ \bfz_t^{*} := \bfz_{t - T^*(\bfz)},$ for $ t \in \mathbb{Z}$, such that $T^*(\bfz) := \inf\{s \in \mathbb{Z} : |\bfz_s| = \|\bfz\|_\infty\}$ and $B:\ell^p \to \ell^p$ is the backward-shift map.
\par 
We link the distribution of the spectral cluster process $\bfQ^{(\alpha)}$ from Equation~\eqref{eq:def:cluster:process} and the distribution of the class $[\bfQ^{(\alpha)}]$ through the mappings \eqref{eq:psif} in the next proposition whose proof is given in Section~\ref{subsub:ex2}.
\begin{proposition}\label{ex2}
The following relation holds for every real-valued bounded measurable function $g$ on $\ell^\a$
\beao
   \E[g(\bfQ^{(\alpha)})] &=& \E[\psi_g ([\bfQ^{(\alpha)}])]\,,
\eeao
where $\psi_g$ is as in \eqref{eq:psif}. This relation remains valid if $\a$ is replaced by $p$, whenever the spectral cluster process in $\ell^p$ is well defined.
\end{proposition}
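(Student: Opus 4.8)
The plan is to reduce the identity to the time-change formula for the spectral tail process $\bfTh$, invoked together with the representation $\bfQ^{(\alpha)}\eqd\bfTh/\|\bfTh\|_\alpha$ supplied by Proposition~\ref{lem:representation} (recall $c(\alpha)=1$, and that $\|\bfTh\|_\alpha<\infty$ and $T^*(\bfTh)<\infty$ a.s.\ under $\anticlust$ by Proposition~\ref{prop:Janssenequivalences}). A preliminary simplification of $\psi_g$ is convenient: writing $\bfz^*=(\bfz_{t-T^*(\bfz)})_t=B^{T^*(\bfz)}\bfz$ in \eqref{eq:psif} and substituting $k=j+T^*(\bfz)$ yields $\psi_g([\bfz])=\sum_{k\in\mathbb{Z}}|\bfz_{-k}|^\alpha g(B^k\bfz)$, an expression that does not depend on the chosen representative of $[\bfz]$ (the anchoring through $T^*$ only serving to exhibit a canonical one) and that converges absolutely for $\bfz\in\ell^\alpha$ because $|g|\le\|g\|_\infty$ and $\sum_k|\bfz_{-k}|^\alpha=\|\bfz\|_\alpha^\alpha$.

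Take first $p=\alpha$. Since $\|\bfQ^{(\alpha)}\|_\alpha=1$ a.s., Fubini's theorem (the partial sums being dominated by $\|g\|_\infty$) gives $\E[\psi_g([\bfQ^{(\alpha)}])]=\sum_{k\in\mathbb{Z}}\E[\,|\bfQ^{(\alpha)}_{-k}|^\alpha g(B^k\bfQ^{(\alpha)})\,]$. Substituting $\bfQ^{(\alpha)}=\bfTh/\|\bfTh\|_\alpha$, using $\|B^k\bfTh\|_\alpha=\|\bfTh\|_\alpha$ and re-indexing $m=-k$, the $m$-th term equals $\E[f((\bfTh_{t+m})_t)]$ where $f(\bfx):=|\bfx_0|^\alpha\,\|\bfx\|_\alpha^{-\alpha}\,g(\bfx/\|\bfx\|_\alpha)$ is bounded and scale-invariant. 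As $f$ vanishes on sequences whose $0$-th coordinate is $\bfzero$, the time-change formula \eqref{eq:june5a} (in its classical form for scale-invariant functionals of the sequence, cf.\ Planini\'c and Soulier \cite{planinic:soulier:2018}) applied with $s=-m$ gives $\E[f((\bfTh_{t+m})_t)]=\E[f((\bfTh_{t+m})_t)\,\1(\bfTh_m\ne\bfzero)]=\E[\,|\bfTh_{-m}|^\alpha f(\bfTh)\,]$. Summing over $m$ and using $\sum_m|\bfTh_{-m}|^\alpha=\|\bfTh\|_\alpha^\alpha$ together with $|\bfTh_0|=1$ (so that $\|\bfTh\|_\alpha^\alpha f(\bfTh)=g(\bfTh/\|\bfTh\|_\alpha)$ is bounded, legitimising the interchange), one obtains $\E[\psi_g([\bfQ^{(\alpha)}])]=\E[\|\bfTh\|_\alpha^\alpha f(\bfTh)]=\E[g(\bfTh/\|\bfTh\|_\alpha)]=\E[g(\bfQ^{(\alpha)})]$, which is the claim for $p=\alpha$.

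For general $p$ with $\bfQ^{(p)}$ well defined (so $c(p)<\infty$) I would not repeat the computation but reduce it to the case $p=\alpha$ via the change-of-norms representation \eqref{eq:def:cluster:process}, according to which $\bfQ^{(p)}$ has the law of $\bfQ^{(\alpha)}/\|\bfQ^{(\alpha)}\|_p$ tilted by $\|\bfQ^{(\alpha)}\|_p^\alpha/c(p)$. Applying the identity just proved to the functional $G(\bfy):=\|\bfy\|_p^\alpha\,g(\bfy/\|\bfy\|_p)$ on $\ell^\alpha$ — for which the argument above still goes through because $\psi_G([\bfQ^{(\alpha)}])$ is dominated by $\|g\|_\infty\|\bfQ^{(\alpha)}\|_p^\alpha$ and $\E[\|\bfQ^{(\alpha)}\|_p^\alpha]=c(p)<\infty$ — and then using that $\|\cdot\|_p$ is shift-invariant to rewrite $\psi_G([\bfQ^{(\alpha)}])$ in terms of the class $[\bfQ^{(\alpha)}/\|\bfQ^{(\alpha)}\|_p]$ and of the tilt, one recovers the stated relation for $\bfQ^{(p)}$; the tilt $\|\bfQ^{(\alpha)}\|_p^\alpha$ in \eqref{eq:def:cluster:process} is precisely what reconciles the $\ell^\alpha$- and $\ell^p$-normalisations of $\psi_g$.

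The main obstacle I anticipate is the bookkeeping in the step where the sum over anchors collapses: one has to verify carefully that the indicators $\1(\bfTh_m\ne\bfzero)$ combine with the weights $|\bfTh_m|^\alpha$ in \eqref{eq:june5a} so that precisely the total mass $\|\bfTh\|_\alpha^\alpha$ is recovered — this is where a.s.\ finiteness of $T^*(\bfTh)$, equivalently $|\bfTh_t|\to0$ (Proposition~\ref{prop:Janssenequivalences}), enters — and, when $p<\alpha$, that the relevant test functional, although scale-invariant, is no longer bounded, so that \eqref{eq:june5a} must be used in its version for nonnegative functionals with the integrability $\E[\|\bfTh\|_p^{\alpha-p}]<\infty$ (equivalent to $c(p)<\infty$) supplying the dominating function.
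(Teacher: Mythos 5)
Your treatment of the case $p=\alpha$ is correct, and it differs from the paper's proof only in being self-contained: the paper disposes of this case by quoting Proposition 3.6 of Janssen \cite{janssen:2019}, whereas you in effect reprove that identity from the time-change formula. Your simplification $\psi_g([\bfz])=\sum_{k\in\Z}|\bfz_{-k}|^\alpha g(B^k\bfz)$, its independence of the representative, the Fubini steps, and the identification of the $m$-th term as $\E[f(B^{-m}\bfTh)]$ with the $0$-homogeneous $f(\bfx)=|\bfx_0|^\alpha\|\bfx\|_\alpha^{-\alpha}g(\bfx/\|\bfx\|_\alpha)$ are all sound; you are also right to invoke the classical Planini\'c--Soulier form of the time-change formula rather than \eqref{eq:june5a} as literally stated, since $f$ is scale- but not shift-invariant, and the indicator $\1(\bfTh_m\neq\bfzero)$ is indeed absorbed because $f$ vanishes when the zero coordinate does.

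The extension to general $p$, however, has a genuine gap. If you carry out your reduction with $G(\bfy)=\|\bfy\|_p^\alpha g(\bfy/\|\bfy\|_p)$, then, since $\|B^k\bfz\|_p=\|\bfz\|_p$ while the weights of $\psi_g$ evaluated at $\bfz/\|\bfz\|_p$ are $|\bfz_{-k}|^\alpha/\|\bfz\|_p^\alpha$,
\beao
\psi_G([\bfz])&=&\sum_{k\in\Z}|\bfz_{-k}|^\alpha\,\|\bfz\|_p^\alpha\, g\big(B^k\bfz/\|\bfz\|_p\big)\;=\;\|\bfz\|_p^{2\alpha}\,\psi_g\big([\bfz/\|\bfz\|_p]\big)\,,
\eeao
so the $p=\alpha$ identity applied to $G$ yields $c(p)\,\E[g(\bfQ^{(p)})]=\E\big[\|\bfQ^{(\alpha)}\|_p^{2\alpha}\,\psi_g([\bfQ^{(\alpha)}/\|\bfQ^{(\alpha)}\|_p])\big]$, while the tilt \eqref{eq:def:cluster:process1} represents $c(p)\,\E[\psi_g([\bfQ^{(p)}])]$ as $\E\big[\|\bfQ^{(\alpha)}\|_p^{\alpha}\,\psi_g([\bfQ^{(\alpha)}/\|\bfQ^{(\alpha)}\|_p])\big]$, with a single power of $\|\cdot\|_p^\alpha$. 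The two do not match, and the mismatch is not a removable bookkeeping detail: with $g\equiv1$ your conclusion would force $\E[\|\bfQ^{(p)}\|_\alpha^\alpha]=1$, which contradicts $\E[\|\bfQ^{(p)}\|_\alpha^\alpha]=1/c(p)$ obtained from \eqref{eq:def:cluster:process1} (this is exactly the identity $\E[\|\bfQ^{(1)}\|_\alpha^\alpha]=1/c(1)$ used around Corollary~\ref{cor:stable:limit}); hence no other choice of a single auxiliary functional fed into the $\alpha$-identity can close your argument. The underlying issue is that the weights $|\bfz_{-j}|^\alpha$ in \eqref{eq:psif} sum to $\|\bfz\|_\alpha^\alpha$, which equals one only on the $\ell^\alpha$-sphere; for $p\neq\alpha$ this normalisation must be carried along (a direct time-change computation on the tilted law goes through once the weights are renormalised by $\|\bfz\|_\alpha^{\alpha}$ of the $p$-normalised sequence). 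The paper handles general $p$ by a different mechanism: it does not recycle the $\alpha$-case through an auxiliary functional, but introduces the Radon--Nikodym derivative of $\mathcal{L}(\bfQ^{(p)})$ with respect to $\mathcal{L}(\bfTh/\|\bfTh\|_p)$ coming from \eqref{eq:def:cluster:process1} and applies Janssen's Proposition 3.6 a second time to that tilted law, so that the density supplies precisely the normalisation your reduction loses.
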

For $p \le \a$ the mappings in \eqref{eq:psif} are continuous functionals 
on $(\tilde{\ell}^p, \tilde{d}_p)$ and we can extend 
Theorem~\ref{cor:consistency} to continuous functionals on $(\ell^p,d_p)$ evaluated at the spectral cluster process $\bfQ^{(p)}$ taking values in $(\ell^p, d_p)$. 
\begin{theorem}\label{thm:random_sampling}
Assume the conditions of Theorem~\ref{cor:consistency} for $ p\le \a$. Then for any continuous bounded 
function $g: \ell^p \cap \{\bfx : \|\bfx\|_p = 1, |\bfx_0| > 0\} \to  \mathbb{R}$, 
\beam\label{eq:estimate:f}
     \widehat{g}^{(p)} &:=& 
     \frac{1}{k}  \sum_{t=1}^{m} \,  \underbrace{\sum_{j=1}^{b} W_{j,t}(p) \, g\Big( \frac{ B^{j-1}\mathbf{B}_{t}}{\|\mathbf{B}_t\|_p} \Big)}_{=:\psi_g(\mathbf{B}_t/\|{\bf B}_t\|_p)}  \, \1(\| \mathbf{B}_t\|_p > \|\mathbf{B}\|_{p,( k  +1)} )\\& \stp &\E[g(\bfQ^{(p)})]\,,\qquad \nto\,,\nonumber
\eeam
where $W_{j,t}(p)=|\bfX_{(t-1)b+j}|^\alpha/\|\mathbf{B}_t\|_p^\alpha$ for all
$j=1,\dots, b$. 
\end{theorem}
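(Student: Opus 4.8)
The plan is to recognise the estimator $\widehat g^{(p)}$ as a single instance of Theorem~\ref{cor:consistency}, applied not to a shift-invariant functional but to $\psi_g$ itself (cut off by a threshold indicator), and then to identify the limit through Proposition~\ref{ex2}. First I would record that the summand in \eqref{eq:estimate:f} depends on $\bfB_t$ only through its $p$-normalisation: the weights $W_{j,t}(p)=|\bfX_{(t-1)b+j}|^\a/\|\bfB_t\|_p^\a$ and the arguments $B^{j-1}\bfB_t/\|\bfB_t\|_p$ are $0$-homogeneous in $\bfB_t$. Moreover, reading \eqref{eq:psif} as a genuine map on $\wt\ell^p$, the sum $\sum_{j\in\bbz}|\bfz^*_{-j}|^\a g(B^j\bfz^*)$ is independent of the maximising representative chosen (replacing $\bfz^*$ by $B^k\bfz^*$ only reindexes the sum), so $\psi_g$ is well defined and $0$-homogeneous on $\wt\ell^p$ once $g$ is read as a function on the unit $p$-sphere. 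Writing $H(\bfx):=\psi_g(\bfx)\,\1(\|\bfx\|_p>1)$ and using $\1(\|\bfB_t\|_p>\|\bfB\|_{p,(k)})=\1(\|\,\|\bfB\|_{p,(k)}^{-1}\bfB_t\|_p>1)$, one then gets $\widehat g^{(p)}=\tfrac1k\sum_{t=1}^m H\big(\|\bfB\|_{p,(k)}^{-1}\bfB_t\big)$.

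Next I would check that $H$ is, up to a sign decomposition, a legitimate cluster functional on $(\wt\ell^p,\wt d_p)$ — the step where $p\le\a$ is essential. It vanishes near the origin by the indicator factor. It is bounded: for $p\le\a$ one has $\|\bfx\|_\a\le\|\bfx\|_p$, hence $\sum_{j}|\bfx_j|^\a/\|\bfx\|_p^\a=\|\bfx\|_\a^\a/\|\bfx\|_p^\a\le1$ and $|\psi_g(\bfx)|\le\|g\|_\infty$ (boundedness of $g$ also renders harmless the coordinates whose weight $|\bfx_j|^\a$ vanishes). It is $\wt d_p$-continuous off $\{\|\bfx\|_p=1\}$ by the continuity of the maps \eqref{eq:psif} recorded just before the theorem (again valid precisely for $p\le\a$, where $\|\cdot\|_\a\le\|\cdot\|_p$ controls the $\a$-weight of the $\ell^p$-tail under small perturbations); since $\P(\|Y\bfQ^{(p)}\|_p=1)=\P(Y=1)=0$ this gives $\P(Y\bfQ^{(p)}\in D(H))=0$. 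To remove the sign constraint I split $g=g_+-g_-$ into continuous bounded nonnegative parts; by linearity of \eqref{eq:psif} in $g$, $H=H_+-H_-$ with $H_\pm:=\psi_{g_\pm}(\cdot)\,\1(\|\cdot\|_p>1)\in\mathcal G_+(\wt\ell^p)$ nonnegative cluster functionals.

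Applying Theorem~\ref{cor:consistency} to $H_+$ and $H_-$ and subtracting (legitimate, both limits being finite since $H_\pm$ are bounded and $d(-y^{-\a})$ restricted to $\{\|y\bfQ^{(p)}\|_p>1\}=\{y>1\}$ has total mass one) yields $\widehat g^{(p)}\stp\int_0^\infty\E[H(y\bfQ^{(p)})]\,d(-y^{-\a})$. Using $\|\bfQ^{(p)}\|_p=1$ a.s. and the $0$-homogeneity of $\psi_g$, the integrand equals $\E[\psi_g([\bfQ^{(p)}])]\,\1(y>1)$, so the integral is $\E[\psi_g([\bfQ^{(p)}])]\int_1^\infty\a y^{-\a-1}\,dy=\E[\psi_g([\bfQ^{(p)}])]$, which by Proposition~\ref{ex2} (valid with $p$ in place of $\a$, as $c(p)<\infty$ makes $\bfQ^{(p)}$ well defined and $p\le\a$) equals $\E[g(\bfQ^{(p)})]$.

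The main obstacle is the middle step: establishing boundedness and, above all, $\wt d_p$-continuity of $\psi_g$ so that $D(H_\pm)$ is $Y\bfQ^{(p)}$-null. This is exactly where $p\le\a$ cannot be dropped: for $p>\a$ the $\a$-weights $|\bfx_j|^\a$ need not be summable over $\ell^p$ and an $\ell^p$-small perturbation can relocate the bulk of a block's $\a$-mass, destroying both properties, so the reduction collapses. If a self-contained argument is preferred over citing the preceding continuity statement, one bounds $|\psi_g(\bfx)-\psi_g(\bfy)|$ by isolating the finitely many coordinates carrying all but $\epsilon$ of the $\ell^\a\subseteq\ell^p$-weight (handled by uniform continuity of $g$ on the compact portion of the sphere where those coordinates stay bounded away from $0$) together with a remainder of total $\a$-weight at most $\epsilon$.
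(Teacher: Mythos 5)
Your proposal is correct and follows essentially the same route as the paper: show that $\psi_g$ (cut off by the threshold indicator) is a bounded, a.s.-continuous cluster functional on $(\tilde\ell^p,\tilde d_p)$ — note that the continuity assertion stated just before the theorem is itself only justified inside the paper's proof, so you cannot simply cite it, but the self-contained $\epsilon$-argument you sketch (finitely many coordinates carrying all but $\epsilon$ of the $\alpha$-weight, continuity of $g$ at those finitely many shifted renormalized sequences, a remainder of $\alpha$-weight at most $\epsilon$) is exactly the paper's argument — and then apply Theorem~\ref{cor:consistency} (via Lemma~\ref{prop:consistency}) and identify the limit through Proposition~\ref{ex2}. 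Your additional bookkeeping (the split $g=g_+-g_-$, the remark that $\P(\|Y\bfQ^{(p)}\|_p=1)=0$, the use of $\|\cdot\|_\alpha\le\|\cdot\|_p$ for $p\le\alpha$, and the evaluation of the $y$-integral) is sound and merely makes explicit steps the paper leaves implicit.
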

The proof is given in Section~\ref{subsub:thm:rs}.
\subsection{Applications}
Examples of {non-}shift-invariant functionals on 
$(\ell^{p}, d_p)$ are measures of serial dependence, probabilities of 
large deviations such as the supremum of a random walk and ruin probabilities, and functionals of the spectral tail process $\bfTh$.
 We study these examples in the remainder of this section.
\subsubsection{Measures of serial dependence} Define $g_h(\bfx_t) = |\bfx_h|^{\a} \frac{\bfx_0^\top}{|\bfx_0|}\frac{\bfx_h}{|\bfx_h|}$. Then the following result 
is straightforward from Theorem~\ref{thm:random_sampling}.
\begin{corollary}\label{cor:measres:serial:dependence}
Assume the conditions of Theorem~\ref{thm:random_sampling} for $p=\a$. 
Then 
\begin{align*}
    \widehat g_{h}^{(\alpha)}& :=  \, 
        \frac{1}{k} {  \sum_{t=1}^{m} \underbrace{\sum_{j=1}^{b-h} W_{j,t} W_{j+h,t} \,  \frac{\bfX_{j,t}^{\top}}{|\bfX_{j,t}|}\,\frac{\bfX_{j+h,t}}{|\bfX_{j+h,t}|}}_{=: \psi_{g_h}(\mathbf{B}_t/\|\bfB_t\|_p)}\, \1(\| \mathbf{B}_t\|_\alpha > \|\mathbf{B}\|_{\alpha,( k +1 )})}. \\
        & \xrightarrow[]{\P} \,  \E[g_h(\bfQ^{(a)})], \quad \quad n \to + \infty,
\end{align*}
where the weights  $W_{j,t}=W_{j,t}(\a)$ are defined in Theorem~\ref{thm:random_sampling}, satisfying $\sum_{j=1}^{b}W_{j,t}= 1$, and $\bfX_{j,t} := \bfX_{(t-1)b + j}$ for $j=1,\dots,b$.
\end{corollary}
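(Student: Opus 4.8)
The plan is to deduce the statement directly from Theorem~\ref{thm:random_sampling} applied with $p=\alpha$ and $g=g_h$; the only work is to check that $g_h$ satisfies the hypotheses of that theorem and to rewrite the resulting weighted sum $\psi_{g_h}$ in the displayed form. First I would check that $g_h$ is a bounded continuous function on $\ell^{\alpha}\cap\{\bfx:\|\bfx\|_\alpha=1,\ |\bfx_0|>0\}$. Boundedness is immediate: on the unit sphere $|\bfx_h|^\alpha\le\|\bfx\|_\alpha^\alpha=1$ and $|\bfx_0^\top\bfx_h|\le|\bfx_0|\,|\bfx_h|$ by Cauchy--Schwarz, so $|g_h(\bfx)|\le1$. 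For continuity I would rewrite $g_h(\bfx)=|\bfx_0|^{-1}\,\bfx_0^\top\,\bfx_h\,|\bfx_h|^{\alpha-1}$ and note that the coordinate maps $\bfx\mapsto\bfx_0$ and $\bfx\mapsto\bfx_h$ are continuous on $(\ell^\alpha,d_\alpha)$, that the map $\bfv\mapsto|\bfv|^{\alpha-1}\bfv$ is continuous on $\R^d$ including at the origin (its Euclidean norm there being $|\bfv|^\alpha\to0$), and that $|\bfx_0|$ stays positive on the relevant domain; hence $g_h$ is continuous there and Theorem~\ref{thm:random_sampling} applies.

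Next I would compute $\psi_{g_h}(\mathbf{B}_t)$ from the defining formula in \eqref{eq:estimate:f} and match it with the double sum in the corollary. Write $\bfX_{j,t}:=\bfX_{(t-1)b+j}$ and recall $W_{j,t}=W_{j,t}(\alpha)=|\bfX_{j,t}|^\alpha/\|\mathbf{B}_t\|_\alpha^\alpha$ with $\sum_{j=1}^b W_{j,t}=1$, since $\|\mathbf{B}_t\|_\alpha^\alpha=\sum_{j=1}^b|\bfX_{j,t}|^\alpha$. Applying the shift $B^{j-1}$ to the block brings $\bfX_{j,t}$ into coordinate $0$ and the block entry lying $h$ positions from it into coordinate $h$ (entries outside the block being $\zerobold$ after the embedding); the normalization by $\|\mathbf{B}_t\|_\alpha$ cancels between the two unit vectors in $g_h$, and $|\,\cdot\,|^\alpha$ of the normalized $h$-th coordinate is the matching weight. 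Hence, using that the scalar $\bfx_0^\top\bfx_h/(|\bfx_0|\,|\bfx_h|)$ is symmetric under exchanging the two coordinates and relabelling the summation index, one gets
\[
\psi_{g_h}(\mathbf{B}_t)=\sum_{j=1}^{b-h}W_{j,t}\,W_{j+h,t}\,\frac{\bfX_{j,t}^\top}{|\bfX_{j,t}|}\,\frac{\bfX_{j+h,t}}{|\bfX_{j+h,t}|}\,,
\]
the range $j=1,\dots,b$ shrinking to $j=1,\dots,b-h$ because $\bfX_{j+h,t}$ is outside the block, so $W_{j+h,t}=0$, when $j+h>b$. When some $\bfX_{j,t}=\zerobold$ the corresponding product is read as $0$, consistently with \eqref{eq:estimate:f} where the vanishing weight $W_{j,t}$ annihilates that term (equivalently, $W_{j,t}\,\bfX_{j,t}^\top/|\bfX_{j,t}|=|\bfX_{j,t}|^{\alpha-1}\bfX_{j,t}^\top/\|\mathbf{B}_t\|_\alpha^\alpha$ is well defined).

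Substituting this expression for $\psi_{g_h}(\mathbf{B}_t)$ into the conclusion of Theorem~\ref{thm:random_sampling} with $p=\alpha$ yields exactly $\widehat g_h^{(\alpha)}\xrightarrow[]{\P}\E[g_h(\bfQ^{(\alpha)})]$ as $n\to\infty$, which is the claim (the required hypotheses, including $\conditionMX{\alpha}$, are those assumed in the statement via the conditions of Theorem~\ref{thm:random_sampling}). I expect the only delicate point to be the index bookkeeping of the second step — tracking which block entries $g_h$ reads off after the shift and checking that the out-of-range terms vanish — together with the degenerate case $\bfX_{j,t}=\zerobold$; neither is a genuine difficulty, and nothing beyond the already-established Theorem~\ref{thm:random_sampling} is needed.
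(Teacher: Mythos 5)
Your proposal is correct and follows exactly the route the paper intends: the paper gives no separate argument, stating the corollary as a direct consequence of Theorem~\ref{thm:random_sampling} with $p=\alpha$ and $g=g_h$. Your verification that $g_h$ is bounded and continuous on $\ell^\alpha\cap\{\|\bfx\|_\alpha=1,\,|\bfx_0|>0\}$ (including continuity at $\bfx_h=\zerobold$ via $\bfv\mapsto|\bfv|^{\alpha-1}\bfv$) and the bookkeeping identifying $\psi_{g_h}(\mathbf{B}_t)$ with the displayed double sum, with out-of-range terms vanishing, is precisely the omitted routine work.
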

The function $g_h$ gives a summary of the magnitude and direction of the time series $h$ lags after recording a high-level exceedance of the norm, and satisfies the relation $\sum_{h \in \mathbb{Z}}\E[g_h(\bfQ^{(\alpha)})]=1$. 
\begin{example}\rm 
Let $(X_t)$ be a linear process satisfying the assumptions in Example~\ref{ex:AR}, then
\beao
    \E[g_h(Q^{(\alpha)})] = \frac{\sum_{t \in \mathbb{Z}}  |\varphi_t|^{\a}  |\varphi_{t+h}|^{\a}  \sign(\varphi_{t})\sign(\varphi_{t+h}) }{ \big(\|\varphi\|_\alpha^{\alpha}\big)^2}, \qquad h\in\bbz\,.
\eeao
This \fct\ is proportional to the autocovariance \fct\ of a finite variance
 linear process
with coefficients $(|\varphi_t|^\a\,\sign(\varphi_t))$. In particular, for $\a=1$ it is proportional to the autocovariance \fct\ of a finite variance linear process 
with coefficients $(\varphi_t)$.
\end{example}
\subsubsection{Large deviations for the supremum of a random walk} We start by reviewing Theorem 4.5 in Mikosch and Wintenberger \cite{mikosch:wintenberger:2016}; the proof is given in Section~\ref{subsub:LGsup}.
\begin{proposition}\label{prop:LGsup}
Consider a univariate stationary \seq\ $(X_t)$ satisfying  $\rvalpha$ 
for some $\alpha \ge 1$, $\anticlust$, $\conditionsmallsets{1}$, and $c(1) <  \infty$. Then for all $p \ge 1$,
\begin{align}\label{eq:Dec2}
    &\Big| \, \frac{\P(\sup_{1 \le t \le n}S_t > x_n)}{n\P(|X_1| > x_n)}   \nonumber \\
        & \qquad \qquad - c(p)\, \E\Big[ \lim_{s \to  \infty} \Big(\mbox{$\sup_{t \ge -s} \sum_{i=-s}^t Q^{(p)}_i$} \Big)^{\alpha}_{+} \Big] \, \Big| \to 0, \quad n \to  \infty.
\end{align}
\end{proposition}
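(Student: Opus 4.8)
The plan is to start from the observation that $\sup_{1\le t\le n}S_t$ is a continuous (indeed $1$-Lipschitz for $\ell^1$, and controllable for $\ell^p$ with $p\le\alpha$ after a truncation) functional of the block $X_{[1,n]}$, and to reduce the large-deviation asymptotics of $\P(\sup_{1\le t\le n}S_t>x_n)$ to an expectation against the spectral cluster process $\bfQ^{(p)}$ using Theorem~\ref{thm:main:theorem}. Concretely, first I would recall the classical reduction (as in Mikosch and Wintenberger~\cite{mikosch:wintenberger:2016}): under $\anticlust$, $\conditionsmallsets{1}$ and $c(1)<\infty$, the running-maximum functional on a block behaves asymptotically like the running maximum on a single cluster, so that
\[
\P\Big(\sup_{1\le t\le n}S_t>x_n\Big)\sim \P(\|X_{[0,n]}\|_1>x_n)\,\frac{\E\big[h(\bfQ^{(1)})\big]}{\E\big[\1(\|\bfQ^{(1)}\|_1\ge \text{something})\big]}\,,
\]
where $h$ is the shift-invariant functional $h(\bfx)=\big(\sup_{s}\sup_{t\ge -s}\sum_{i=-s}^{t}\bfx_i\big)^\alpha_+$ suitably normalised. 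The key point is that $\sup_{1\le t\le n}S_t$ only sees the positive excursions of the partial-sum path, and the contribution of the "small values" is negligible precisely by $\conditionsmallsets{1}$, so the limit is governed by the single extremal cluster described by \eqref{eq:ld:blocks}.

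Next I would invoke the change-of-norms representation from Proposition~\ref{lem:representation}. Since the functional $h$ is shift-invariant and $0$-homogeneous of degree $\alpha$ after the power, I can re-express $c(1)\,\E[h(\bfQ^{(1)})]$ through any $\bfQ^{(p)}$ with $p\ge 1$: using \eqref{eq:def:cluster:process1} and \eqref{eq:dec10a},
\[
c(1)\,\E\big[h(\bfQ^{(1)})\big]
= \E\big[\|\bfTh/\|\bfTh\|_\alpha\|_1^\alpha\, h(\bfTh/\|\bfTh\|_1)\big]
= \E\big[h\big((\bfTh_t)\big)\,\big/\,\|\bfTh\|_\alpha^{0}\big]\cdots
= c(p)\,\E\big[h(\bfQ^{(p)})\big],
\]
because the quantity $\E[\|\bfTh\|_p^\alpha/\|\bfTh\|_\alpha^\alpha\cdot \tilde h(\bfTh/\|\bfTh\|_p)]$ is independent of $p$ whenever $\tilde h$ is $0$-homogeneous and shift-invariant — it always equals $\E[\tilde h(\bfTh)\,\cdot]$ after the time-change formula \eqref{eq:june5a} collapses the sum over shifts, exactly as in the telescoping computation \eqref{eq:dec11b}. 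This yields that the right-hand side of \eqref{eq:Dec2} does not depend on $p\ge 1$, and identifies it with $c(1)\,\E[h(\bfQ^{(1)})]$, which is the content of Theorem 4.5 in \cite{mikosch:wintenberger:2016}.

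The main obstacle is justifying the passage to the limit in the running-maximum functional: unlike $\|\cdot\|_\infty$ or $\|\cdot\|_1$, the map $\bfx\mapsto\sup_{1\le t\le n}\sum_{i=1}^t\bfx_i$ is not bounded and the limiting functional involves a double supremum $\lim_{s\to\infty}\sup_{t\ge -s}\sum_{i=-s}^t Q_i^{(p)}$, so one must control both the truncation of small values (handled by $\conditionsmallsets{1}$, giving that $\|\ov{x_n^{-1}X_{[1,n]}}^\epsilon\|_1\to 0$ in the relevant sense) and the tail of the cluster (so that the $\limsup$ over $s$ is finite a.s., which follows from $\|\bfTh\|_1<\infty$ a.s., i.e. $c(1)<\infty$). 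I would handle this by a standard truncation-and-approximation argument: write $S_t=\ov S_t^\epsilon+\underline S_t^\epsilon$, show the $\epsilon$-small part contributes negligibly uniformly in $n$ via $\conditionsmallsets{1}$ and a maximal inequality, apply Theorem~\ref{thm:main:theorem} to the truncated blocks where the functional is continuous and bounded on the sphere, and then let $\epsilon\to 0$. The interchange of $\lim_{s\to\infty}$ with the expectation in the final formula is then justified by monotone/dominated convergence using integrability of $\|\bfQ^{(p)}\|_1$ under $c(p)<\infty$.
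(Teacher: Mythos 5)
Your strategy is workable, but it is genuinely different from the paper's, and the division of labour is almost the opposite. The paper does not rederive the large deviation asymptotics for the running supremum at all: it simply quotes Theorem 4.5 of Mikosch and Wintenberger \cite{mikosch:wintenberger:2016}, which gives $\P(\sup_{1\le t\le n}S_t>x_n)/(n\P(|X_1|>x_n))\to\E\big[(\sup_{t\ge0}\sum_{i=0}^t\Theta_i)_+^\alpha-(\sup_{t\ge1}\sum_{i=1}^t\Theta_i)_+^\alpha\big]$, and then spends all of its effort on rewriting this constant: multiply by $1=\|\bfTh\|^\alpha_\alpha/\|\bfTh\|^\alpha_\alpha$, use $c(1)<\infty$ to get $\E[(\sum_{t\ge1}|\Theta_t|)^{\alpha-1}]<\infty$ (Planini\'c--Soulier) so that Fubini applies, and then apply the time-change formula termwise as in the telescoping computation \eqref{eq:dec11b} to arrive at $c(p)\,\E[\lim_{s}(\sup_{t\ge-s}\sum_{i=-s}^tQ^{(p)}_i)_+^\alpha]$. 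You instead propose to obtain the block asymptotics from Theorem~\ref{thm:main:theorem} and to get the $p$-independence from Proposition~\ref{lem:representation}. The second half of your plan is correct and is in substance what the paper's time-change step accomplishes: the functional $h(\bfx)=(\sup_{t}\sum_{i\le t}\bfx_i)_+^\alpha$ is shift-invariant and homogeneous of degree $\alpha$, so $c(p)\,\E[h(\bfQ^{(p)})]=\E\big[h(\bfTh)/\|\bfTh\|_\alpha^\alpha\big]$ for every $p\ge1$ for which $\bfQ^{(p)}$ is defined; note, though, that your displayed chain is garbled (the factor $\|\bfTh\|_\alpha^{0}$, and the denominator $\E[\1(\|\bfQ^{(1)}\|_1\ge\cdot)]$ is vacuous since $\|\bfQ^{(1)}\|_1=1$ a.s.), so it should be replaced by this clean homogeneity computation.

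For the first half, two remarks. The unboundedness of the running-supremum functional, which you single out as the main obstacle, is not actually an obstacle for the probability statement: the event $\{\bfx:\sup_t\sum_{i\le t}\bfx_i>1\}$ is open, shift-invariant, contained in $\{\|\bfx\|_1>1\}$ (hence bounded away from $\bfzero$ in $(\tilde\ell^1,\tilde d_1)$), and its boundary is a null set of the limit measure; so the $M_0$-portmanteau consequence of Theorem~\ref{thm:main:theorem} (see the Remark following it) yields $\P(\sup_{1\le t\le n}S_t>x_n)/\P(\|X_{[0,n]}\|_1>x_n)\to\E[(\sup_t\sum_{i\le t}Q^{(1)}_i)_+^\alpha]$ directly, with no maximal inequality or $\epsilon$-truncation of the functional needed — that truncation work is already internal to the proof of Theorem~\ref{thm:main:theorem}. (Unboundedness only matters for the estimator in Corollary~\ref{cor:ruin}, not here.) Second, this route invokes the hypotheses of Theorem~\ref{thm:main:theorem}, in particular the threshold growth condition $n/x_n^{1\wedge(\alpha-\kappa)}\to0$, which is not among the assumptions listed in Proposition~\ref{prop:LGsup}; the paper sidesteps this by importing the running-supremum asymptotics from \cite{mikosch:wintenberger:2016} under that paper's own conditions. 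So your plan can be completed, but only under the extra threshold assumption, and your intermediate ``classical reduction'' display should be replaced by the precise portmanteau statement above.
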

\par 
If $\a \ge 1$, then $\|Q^{(1)}\|_\alpha^\alpha \le \|Q^{(1)}\|^\alpha_1 = 1$ and a consistent estimator of $c(1) = 1/\E[ \|Q^{(1)}\|^\alpha_\alpha]$ was suggested in Section~\ref{sec:sums}. A consistent estimator of the term in \eqref{eq:Dec2} is given next.
\begin{corollary}\label{cor:ruin}
Assume the conditions of  Theorem~\ref{thm:random_sampling} for $p = 1$. Then
\begin{align}\label{eq:largedevestimator}
    &\Big| \frac{ \sum_{t=1}^{m} \big(\sup_{1\le j \le b} \frac{X_{t,j} }{\|\mathbf{B}_t\|_1} \big)_{+}^{\alpha} \1(\| \mathbf{B}_t\|_1 > \|\mathbf{B}\|_{1,(k+1)}) }{  \sum_{t=1}^{m}\frac{\|\mathbf{B}_t\|^\alpha_\alpha }{\|\mathbf{B}_t\|^\alpha_1} \1(\| \mathbf{B}_t\|_1 > \|\mathbf{B}\|_{1,(k+1)})} \nonumber \\
        &\qquad \qquad \qquad  -c(1)\, \E\Big[ \lim_{s \to  \infty} \Big(\mbox{$\sup_{t \ge -s} \sum_{i=-s}^t Q^{(1)}_i$} \Big)^{\alpha}_{+} \Big] \Big| \xrightarrow[]{\P} 0\,,\qquad n \to  \infty\, , \nonumber
\end{align}
where $X_{t,j}:=X_{(t-1)b+j}$, for $1\le j\le b$, $1 \le t  \le m$.
\end{corollary}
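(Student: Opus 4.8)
The plan is to view the estimator in \eqref{eq:largedevestimator} as a ratio and to show that its numerator and its denominator, each divided by $k$, converge in probability to explicit constants; the conclusion then follows by the \cmt. Observe that the hypotheses assumed here are exactly those under which Theorem~\ref{cor:consistency} applies for $p=1$. The denominator is the easy part: Theorem~\ref{cor:consistency} with $p=1$ applied to the non-negative functional $g_1(\mathbf{x}):=(\|\mathbf{x}\|_\alpha^\alpha/\|\mathbf{x}\|_1^\alpha)\1(\|\mathbf{x}\|_1>1)$ — which is precisely Corollary~\ref{cor:stable:limit} — gives
\[
\frac1k\sum_{t=1}^{m}\frac{\|\mathbf{B}_t\|_\alpha^\alpha}{\|\mathbf{B}_t\|_1^\alpha}\,\1(\|\mathbf{B}_t\|_1>\|\mathbf{B}\|_{1,(k)})\;\stp\;\E\big[\|\bfQ^{(1)}\|_\alpha^\alpha\big]=(c(1))^{-1},
\]
a limit that is finite and strictly positive because $c(1)<\infty$.

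For the numerator I would work with the map $g_2:\tilde\ell^1\to[0,\infty)$ defined by $g_2(\mathbf{x}):=\big(\sup_{t\in\bbz}\|\mathbf{x}\|_1^{-1}\sum_{i\le t}x_i\big)_{+}^{\alpha}\1(\|\mathbf{x}\|_1>1)$, and first check that it is eligible in Theorem~\ref{cor:consistency}. It is non-negative and shift-invariant, the running partial-sum supremum of an $\ell^1$-sequence being invariant under the backshift; it vanishes on $\{\|\mathbf{x}\|_1\le1\}$, a neighbourhood of the origin of $(\tilde\ell^1,\tilde d_1)$; and — this is why one uses $p=1$ — it is bounded, $0\le g_2\le1$, by the elementary estimate $|\sum_{i\le t}x_i|\le\|\mathbf{x}\|_1$, whereas on $\tilde\ell^\infty$ the analogous functional is unbounded, cf.\ the remark after Proposition~\ref{prop:LGsup}. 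Moreover $\mathbf{x}\mapsto\sup_t\sum_{i\le t}x_i$ is $1$-Lipschitz on $\ell^1$ and $\mathbf{x}\mapsto\|\mathbf{x}\|_1^{-1}$ is continuous where $\|\mathbf{x}\|_1>0$, so the discontinuity set of $g_2$ is contained in $\{\|\mathbf{x}\|_1=1\}$, on which the limiting measure places no mass since $\|\bfQ^{(1)}\|_1=1$ \as\ and $Y$ is Pareto$(\alpha)$, so that $\P(Y\bfQ^{(1)}\in\{\|\mathbf{x}\|_1=1\})=\P(Y=1)=0$. Because $\mathbf{x}\mapsto\|\mathbf{x}\|_1^{-1}\sum_{i\le t}x_i$ is homogeneous of degree zero, $g_2(\|\mathbf{B}\|_{1,(k)}^{-1}\mathbf{B}_t)$ equals $\big(\sup_{1\le j\le b}\|\mathbf{B}_t\|_1^{-1}\sum_{i=1}^{j}X_{(t-1)b+i}\big)_{+}^{\alpha}\1(\|\mathbf{B}_t\|_1>\|\mathbf{B}\|_{1,(k)})$, i.e.\ the $t$-th summand of the numerator in \eqref{eq:largedevestimator}, so Theorem~\ref{cor:consistency} with $p=1$ and $g=g_2$ yields that this numerator divided by $k$ tends in probability to $\int_0^\infty\E[g_2(y\bfQ^{(1)})]\,d(-y^{-\alpha})$. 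Since $\|\bfQ^{(1)}\|_1=1$ \as, the inner normalisation cancels and $g_2(y\bfQ^{(1)})=\big(\sup_t\sum_{i\le t}Q^{(1)}_i\big)_{+}^{\alpha}\1(y>1)$ does not depend on $y$ above $1$, so the integral reduces to $\E\big[\big(\sup_t\sum_{i\le t}Q^{(1)}_i\big)_{+}^{\alpha}\big]$; and since $\bfQ^{(1)}\in\ell^1$ \as\ (so that the left tails of its partial sums vanish) this coincides with $\E\big[\lim_{s\to\infty}\big(\sup_{t\ge-s}\sum_{i=-s}^{t}Q^{(1)}_i\big)_{+}^{\alpha}\big]$, the constant appearing in Proposition~\ref{prop:LGsup}.

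To finish, I would observe that the ratio in \eqref{eq:largedevestimator} equals $(\text{numerator}/k)$ divided by $(\text{denominator}/k)$; as the denominator over $k$ converges in probability to the strictly positive constant $(c(1))^{-1}$, the \cmt\ applied to the map $(a,b)\mapsto a/b$, which is continuous at points with $b\neq0$, delivers convergence in probability of the ratio to $c(1)\,\E\big[\lim_{s\to\infty}\big(\sup_{t\ge-s}\sum_{i=-s}^{t}Q^{(1)}_i\big)_{+}^{\alpha}\big]$, which is the assertion. I expect the main obstacle to be the verification that $g_2$ is a legitimate cluster functional on $(\tilde\ell^1,\tilde d_1)$: the boundedness estimate (which is exactly what fails for $p=\infty$), the Lipschitz continuity of the running partial-sum supremum on $\ell^1$ together with its descent to the quotient metric $\tilde d_1$, and the negligibility, under the limiting measure, of the discontinuity set on the threshold sphere $\{\|\mathbf{x}\|_1=1\}$.
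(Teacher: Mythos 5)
Your argument is correct, and it reaches the conclusion by a somewhat different route than the paper. The paper treats Corollary~\ref{cor:ruin} inside the proof of Proposition~\ref{prop:LGsup} (Section~\ref{subsub:LGsup}) by invoking Theorem~\ref{thm:random_sampling} for $p=1$, i.e.\ the weighted $\psi_g$-machinery designed for non-shift-invariant functionals, applied to $f(\bfx)=\lim_{s\to\infty}(\sup_{t\ge -s}\sum_{i=-s}^t x_i)_+^\alpha$ (noting it is bounded by one and uniformly continuous on the $\ell^1$-sphere), together with an estimator of $c(1)^{-1}$ for the denominator; this is also why the corollary's hypotheses are phrased as ``the conditions of Theorem~\ref{thm:random_sampling}''. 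You instead observe that the relevant functional is in fact shift-invariant and $0$-homogeneous, so the simpler Theorem~\ref{cor:consistency} applies directly to $g_2(\bfx)=(\sup_t\|\bfx\|_1^{-1}\sum_{i\le t}x_i)_+^\alpha\1(\|\bfx\|_1>1)$, while the denominator is exactly Corollary~\ref{cor:stable:limit}; the ratio then converges by the continuous mapping theorem since the denominator limit $1/c(1)$ is positive and finite. Your verifications (boundedness via $|\sum_{i\le t}x_i|\le\|\bfx\|_1$, which indeed fails for $p=\infty$; $1$-Lipschitz continuity of the running-sum supremum and its descent to $\tilde d_1$ by shift-invariance; nullity of the discontinuity set $\{\|\bfx\|_1=1\}$ under the law of $Y\bfQ^{(1)}$ because $\P(Y=1)=0$; reduction of the Pareto integral using $\|\bfQ^{(1)}\|_1=1$ a.s.; identification of $\sup_t\sum_{i\le t}Q^{(1)}_i$ with the $\lim_s$ expression via $\bfQ^{(1)}\in\ell^1$) are exactly the points the paper leaves implicit, and they are all sound. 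What your route buys is economy: it bypasses Proposition~\ref{ex2} and the random-shift representation entirely and matches the displayed self-normalized estimator more directly (the literal output of Theorem~\ref{thm:random_sampling} would carry the extra weights $\|\bfB_t\|_\alpha^\alpha/\|\bfB_t\|_1^\alpha$ in the numerator as well); what the paper's route buys is uniformity with its treatment of genuinely non-shift-invariant functionals in Section~\ref{sec:beyond}. One caveat: your identification of $g_2(\|\bfB\|_{1,(k)}^{-1}\bfB_t)$ with the $t$-th numerator summand presumes reading $X_{t,j}$ in the display as the within-block partial sum $\sum_{i=1}^j X_{(t-1)b+i}$; this is clearly the intended meaning (it is forced by the limiting constant and by the embedding of blocks by zeros), so the silent correction is harmless, but it is worth stating explicitly.
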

Following the same ideas and using Theorem 4.9 in \cite{mikosch:wintenberger:2016}, one can also derive a consistent estimator for the constant 
in the related ruin problem.
\subsubsection{Application: a cluster-based method for inference on $(\bfTh_t)$}\label{sec:theta}\label{sec:forward:tail}\label{sec:inftheta:ge0}
Exploiting the relation  $(\bfQ^{(\alpha)}_t)/|\bfQ^{(\alpha)}_0| \eqd (\bfTh_t)$ discussed in Section~\ref{sec:spectral:a}, we propose cluster-based estimation methods for the spectral tail process.
\par 
Cluster-based approaches with the goal to improve inference 
on $\bfTh_1$ for Markov chains were considered in 
Drees {\it et al.} \cite{drees:segers:warchol:2015}; see also 
Davis {\it et al.} \cite{davis:drees:segers:warchol:2018} and Drees {\it et al.} \cite{drees:janssen:neblung:2021} for related cluster-based procedures on $(\bfTh_{t})_{|t| \le h}$ for fixed $h \ge 0$. Our approach can be seen as an 
extension for inference on the $\ell^\a$-valued \seq\ $(\bfTh_t)$.
\par 
Consider the continuous re-normalization function
 $\zeta(\bfx)=\bfx/|\bfx_0|$ on
$\{\bfx \in \ell^\alpha: |\bfx_0| > 0\}$.
We derive the following result from Theorem~\ref{thm:random_sampling}; the proof is given in Section~\ref{subsub:cc}.
\begin{proposition}\label{cor:infthe}
Assume the conditions of 
Theorem \ref{thm:random_sampling} for $p=\a$. 
Let $\rho: (\ell^\alpha,d_\a) \to  \mathbb{R}$ be a homogeneous continuous function and $\rho_\zeta(\bfx) := (\rho^\alpha \land 1) \circ \mathbf{\zeta}(\bfx) $.  
Then for $k=k_n\to\infty$,
 \beao
 \widehat{\rho_\zeta}^{(\a)}:=\frac{1}{k}\sum_{t=1}^m \psi_{\rho_\zeta}(\bfB_t) \, \1(\| \mathbf{B}_t\|_p > \|\mathbf{B}\|_{p,(k+1)} )
\stp \P(\rho(Y\,\bfTh) > 1) \,, \quad n \to  \infty\,,
 \eeao
 where $\psi_{\rho_\zeta}(\bfB_t)$ is defined in \eqref{eq:estimate:f} and
the Pareto$(\a)$ random variable $Y$ is independent of $\bfTh$.
\end{proposition}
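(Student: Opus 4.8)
The plan is to apply Theorem~\ref{thm:random_sampling} with $p=\a$ to the test function $g=\rho_\zeta$, and then to identify the resulting limit constant $\E[\rho_\zeta(\bfQ^{(\a)})]$ with $\P(\rho(Y\bfTh)>1)$ by means of the a.s.\ representation of $\bfQ^{(\a)}$ in terms of the spectral tail process recalled in Section~\ref{sec:spectral:a}.

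First I would check that $\rho_\zeta$ is an admissible test function for Theorem~\ref{thm:random_sampling}. The renormalization $\zeta(\bfx)=\bfx/|\bfx_0|$ is continuous on $\{\bfx\in\ell^\a:|\bfx_0|>0\}$, and $\bfy\mapsto\rho(\bfy)^\a\wedge1$ is bounded by $1$ and continuous on $\ell^\a$ because $\rho$ is continuous; hence $\rho_\zeta=(\rho^\a\wedge1)\circ\zeta$ is bounded and continuous on $\ell^\a\cap\{\bfx:\|\bfx\|_\a=1,|\bfx_0|>0\}$, which is precisely the domain required in Theorem~\ref{thm:random_sampling}. Moreover $|\bfQ^{(\a)}_0|=1/\|\bfTh\|_\a>0$ a.s.\ (since $|\bfTh_0|=1$), so $\bfQ^{(\a)}$ takes values in this set a.s., and in $\psi_{\rho_\zeta}(\bfB_t)$ every summand carrying a positive weight $W_{j,t}(\a)$ has $\bfX_{(t-1)b+j}\neq 0$, so $\rho_\zeta$ is only ever evaluated at sequences with nonzero zeroth coordinate. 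Theorem~\ref{thm:random_sampling} then applies and gives
\[
\widehat{\rho_\zeta}^{(\a)}\;\stp\;\E[\rho_\zeta(\bfQ^{(\a)})],\qquad \nto.
\]

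Next I would identify the limit constant. By the a.s.\ identities recalled in Section~\ref{sec:spectral:a}, $\zeta(\bfQ^{(\a)})=\bfQ^{(\a)}/|\bfQ^{(\a)}_0|=\bfTh$ a.s., hence $\rho_\zeta(\bfQ^{(\a)})=(\rho^\a\wedge1)(\zeta(\bfQ^{(\a)}))=\rho(\bfTh)^\a\wedge1$ a.s., so that $\E[\rho_\zeta(\bfQ^{(\a)})]=\E[\rho(\bfTh)^\a\wedge1]$. On the other hand, since $\rho$ is homogeneous of degree one and $Y>0$ we have $\rho(Y\bfTh)=Y\,\rho(\bfTh)$; conditioning on $\bfTh$, using independence of $Y$ and $\bfTh$ together with $\P(Y>y)=y^{-\a}\wedge1$ for $y>0$, one gets $\P(\rho(Y\bfTh)>1\mid\bfTh)=\rho(\bfTh)^\a\wedge1$ (trivially also on $\{\rho(\bfTh)=0\}$), whence $\P(\rho(Y\bfTh)>1)=\E[\rho(\bfTh)^\a\wedge1]$. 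Combining this with the previous display yields $\E[\rho_\zeta(\bfQ^{(\a)})]=\P(\rho(Y\bfTh)>1)$, which together with the displayed convergence completes the argument.

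The hard part is not in this proposition: all the genuine work is absorbed into Theorem~\ref{thm:random_sampling} (and hence Theorem~\ref{thm:main:theorem} and Proposition~\ref{ex2}). What is left requires only care, not new ideas: confirming that $\rho_\zeta$ is a legitimate bounded continuous cluster functional on the normalized domain --- in particular that the limit puts no mass on $\{\bfx_0=0\}$, which is where $|\bfTh_0|=1$ enters --- and correctly translating between $\bfQ^{(\a)}$ and $\bfTh$ through $\zeta$ and the a.s.\ representation of Section~\ref{sec:spectral:a}.
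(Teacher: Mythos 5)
Your argument is correct and follows essentially the same route as the paper's proof: apply Theorem~\ref{thm:random_sampling} with $p=\a$ to $g=\rho_\zeta$, then identify $\E[\rho_\zeta(\bfQ^{(\a)})]=\E[\rho(\bfTh)^\a\wedge 1]=\P(\rho(Y\bfTh)>1)$ via the a.s.\ relation $\bfTh=\bfQ^{(\a)}/|\bfQ^{(\a)}_0|$ and the Pareto$(\a)$ law of $Y$. Your added checks (continuity of $\rho_\zeta$ off $\{\bfx_0=0\}$ and $|\bfQ^{(\a)}_0|>0$ a.s.) are exactly the points the paper treats more briefly.
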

Classical examples of such functionals are
$\rho(\bfx ) = \max_{i \ge 0,j \ge i} (x_i - x_{j})_{+}$, functionals related to 
large deviations such as  $\rho(\bfx ) = \sup_{t \ge 0} (\sum_{i=0}^{t}x_i )_{+}$, or measures of serial dependence such as $\rho(\bfx) = |\bfx_h|$.

\section{Cluster inference implementation for regularly varying linear process}\label{ex:AR} 
{In this section we illustrate the index estimators of Corollaries~\ref{cor:extremal_index} and \ref{cor:stable:limit} for  a \regvary\ linear process 
\beao 
X_t &:=& \sum_{j \in \mathbb{Z} } \varphi_j Z_{t-j}, \quad t\in\bbz,
\eeao 
where $(Z_t)$ 
is an iid real-valued \regvary\   \seq\ with (tail)-index $\a>0$, 
and $(\varphi_j)$ are real coefficients \st\ $\sum_{j \in \mathbb{Z}}|\varphi_j|^{1\wedge (\a-\vep)}<\infty$ for some $\vep>0$. 
\par 
In this setting, $(X_t)$ is \regvary\
with the same (tail)-index $\a>0$, and the \ds s of $Z_t$
and $X_t$ are tail-equivalent; see Davis and Resnick \cite{davis:resnick:1986}.
The spectral cluster process of $(X_t)$  is given by  
$Q^{(\alpha)}_t = (\varphi_{t+J}/\|(\varphi_t)\|_\alpha)\, \Theta_0^{Z}$, $t\in\bbz$,
where
$\lim_{\xto}\P(\pm Z_0 > x)/\P(|Z_0| > x) = \P(\Theta_0^Z = \pm 1)$, $\Theta_0^Z$ is independent of a random shift $J$ with \ds\
$\P( J = j) = |\varphi_j|^\alpha/\|(\varphi_t)\|_\alpha^\alpha$; see Kulik and Soulier \cite{kulik:soulier:2020}, (15.3.9). Then 
\begin{align*}
   c(\infty)\; = \; \mbox{$ \max_{t\in\bbz}$}|\varphi_t|^\a/\|\varphi\|_\a^\a
\,,\qquad  c(1) \; = \;   \big(\mbox{$\sum_{t\in\bbz}$}|\varphi_t|\big)^\a/\|\varphi\|_\a^\a\,.
\end{align*}
For the causal AR(1) model given by
$X_t = \varphi\, X_{t-1} + Z_t$, $t \in \mathbb{Z}$, $|\varphi| < 1$,  one retrieves $\theta_{|X|} = c(\infty) = 1-|\varphi|^\alpha $ and $c(1) =  (1- |\varphi|^\alpha)/(1- |\varphi|)^{\alpha}$.
\par 
{
We aim to illustrate the estimators of $\theta_{|X|}$ and $c(1)$ built on extremal $\ell^\alpha$--blocks for the causal AR$(1)$ model with student$(\a)$ noise. 
Guided by \eqref{eq:k},  we take $k= k_n  = \lfloor n/b_n^{2} \rfloor$ as
\[
    {k_n =[ m_n \P(\|\bfB\|_\a > x_{b_n})]} \;\sim\;  n\, \P(|\bfX_0| > x_{b_n}) \;=\; o(n/b_n^{1+\kappa} ),
\]
for $\kappa>0$ sufficiently small using the Potter bound.
For estimation of $\alpha$, we follow
the bias-correction procedure in de Haan {\it et al.}~\cite{dehaan:mercadier:zhou:2016}. 
This estimator is plugged into \eqref{eq:estimate:thet}, \eqref{eq:c11}, resulting in 
the estimators $\widehat{\theta}_{|\bfX|}$, $\widehat{c}(1)$, as a function of block lengths. 
Figures~\ref{fig:2} and \ref{fig:3} present boxplots (in blue) of these estimators 
as a function of $b_n$ and for different sample sizes $n$.  
For comparison, we also show boxplots (in white) of the estimators 
in \eqref{eq:extremal:index:2} and \eqref{eq:c12} based on extremal $\ell^\infty$--blocks. 
Inference based on $\ell^\alpha$--block, 
coupled with a Hill-type estimate of $\alpha$,
seems to be robust compared to the $\ell^\infty$--blocks approach. In all examples the block length $b=32$ gives nice results for the $\ell^\a$--approach in terms of bias and dispersion. Instead, the $\ell^\infty$--estimator appears to be highly sensitive to the block length choice. 
Also, notice that the bias for large block lengths decreases as $n$ increases.
Indeed, if we fix $n$, the relation $\lfloor n/b^{2}\rfloor \to 0$ as $ b \to \infty$ restricts the block length for small sample sizes.
We also refer to Buritic\'a {\it et al.} \cite{buritica:mikosch:meyer:wintenberger} for further simulation 
experiences showing that the estimator of the extremal index in 
\eqref{eq:estimate:thet} compares favorably with various classical
estimators as regards bias.} }
\begin{figure}[htbp]
  \centering
   \includegraphics[width=0.9\textwidth, height=0.265\textwidth]{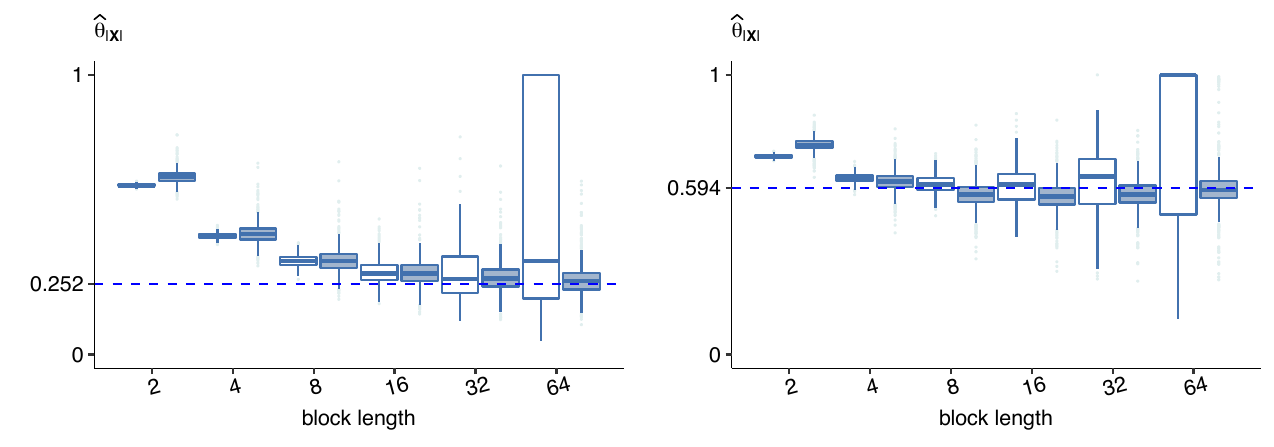}
    \includegraphics[width=0.9\textwidth, height=0.265\textwidth]{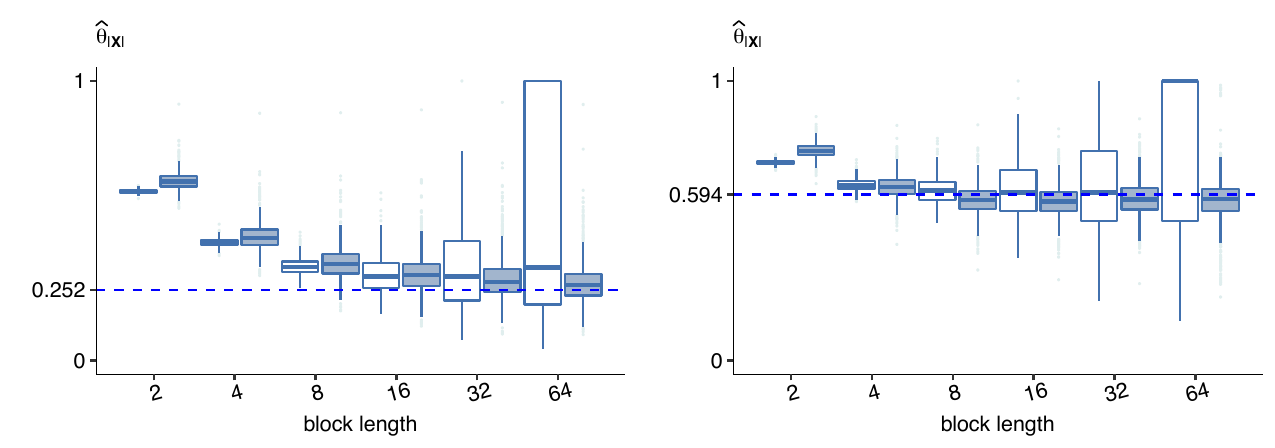}
    \includegraphics[width=0.9\textwidth, height=0.265\textwidth]{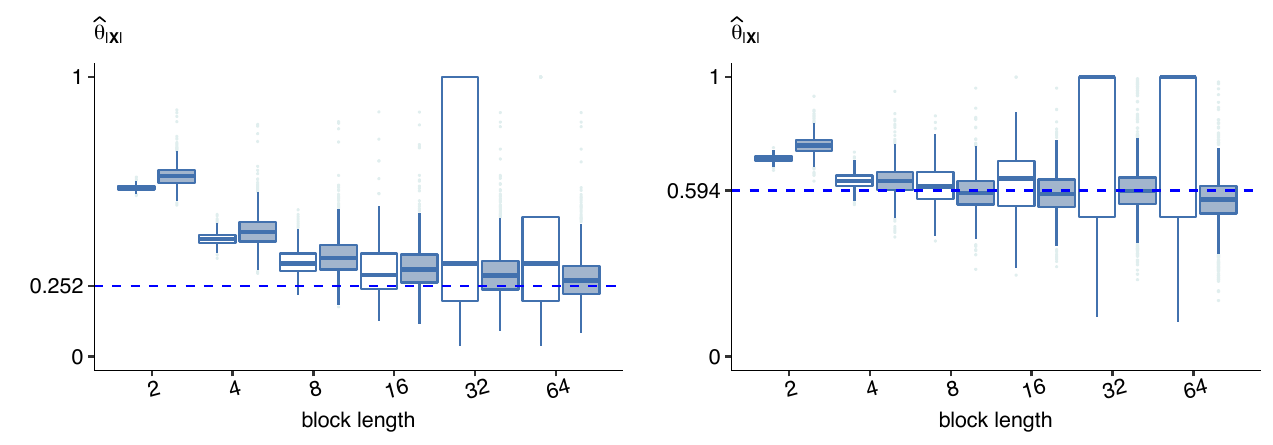}
    \bfi{\rm Boxplot of estimates $\widehat{\theta}_{|X|}$ as a function of $b_n$ from \eqref{eq:estimate:thet} for inference through $\bfQ^{(\alpha)}$ (in blue) and from \eqref{eq:extremal:index:2} through $\bfQ^{(\infty)}$  (in white). $1\,000$ simulated samples $(X_t)_{t=1,\dots,n}$ from a causal AR(1) model with student$(\a)$ noise with $\a=1.3$ and $\varphi = 0.8$ (left column), $\varphi = 0.5$ (right column) were considered. Rows correspond to results for $n = 8\, 000$, $4\, 000$, $2\, 000$ from top to bottom.} 
    \label{fig:2}
\efi
\end{figure}

\begin{figure}[htbp]
  \centering
    \includegraphics[width=0.9\textwidth, height=0.265\textwidth]{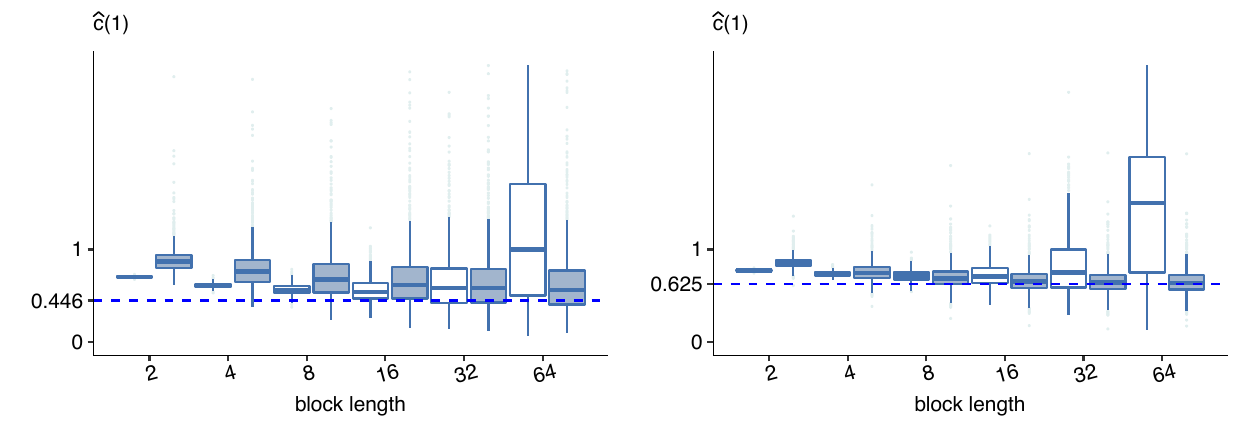}
    \includegraphics[width=0.9\textwidth, height=0.265\textwidth]{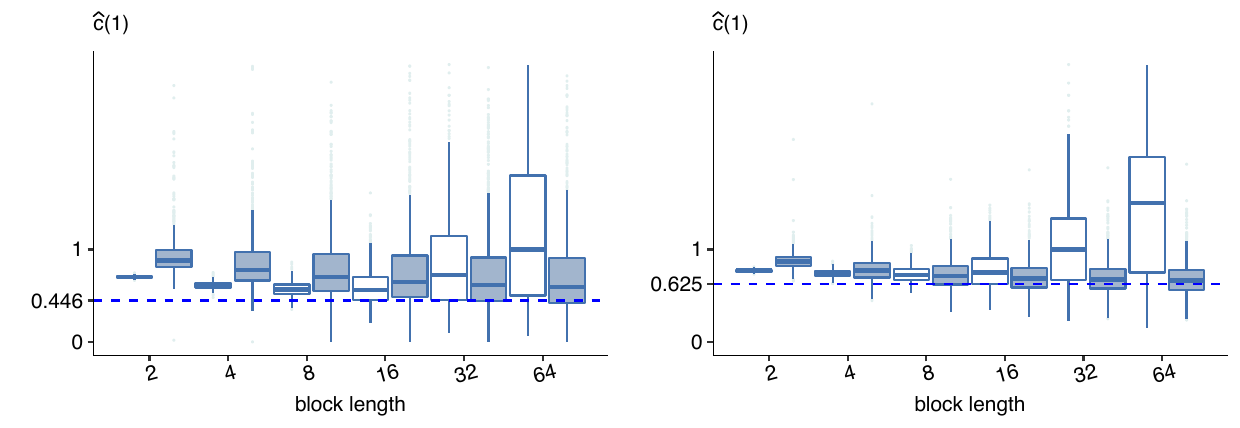}
    \includegraphics[width=0.9\textwidth, height=0.265\textwidth]{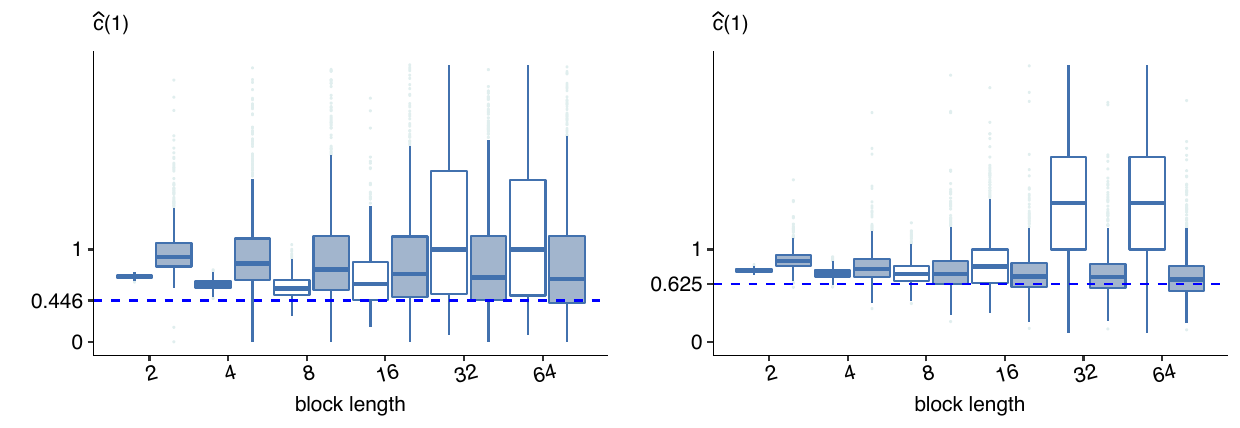}
    \bfi{\rm
    Boxplot of estimates
$\widehat{c}(1)$ as a function of $b_n$ from \eqref{eq:c11} for inference through $\bfQ^{(\a)}$ (in blue) and from \eqref{eq:c12} for inference through $\bfQ^{(\infty)}$  (in white). We simulate $1\,000$ samples $(X_t)_{t=1,\dots,n}$ from an AR(1) model with student($\a$) for $\a=0.7$ and $\varphi = 0.8$ (left column), $\varphi = 0.5$ (right column) were considered. Rows correspond to $n= 8\,000, 4\,000, 2\,000$ from top to bottom. }
    \label{fig:3}
\efi
\end{figure}

\section{Proofs}\setcounter{equation}{0}\label{sec:proof}

\subsection{Proof of Theorem~\ref{thm:main:theorem}}\label{sec:LD}
Recall the properties of the
\seq\ $(x_n)$ from Section~\ref{subsec:assumptions}, in particular   $n\,\P(|\bfX_0|>x_n)\to 0$. The main result in Theorem~\ref{thm:main:theorem} follows 
{by applications of} Lemma~\ref{prop:spt3:finite_constant} and Proposition~\ref{Prop:large_deviations} below; {their proofs are given at the end of this section.}
\begin{lemma}\label{prop:spt3:finite_constant} Consider an $\bbr^d$-valued 
stationary time series $(\bfX_t)$ satisfying the conditions 
$\rvalpha$, $\anticlust$, $\conditionsmallsets{p}$.  
If $p < \alpha$, assume also $n/x_n^p \to 0$ and, if $p=\alpha$, $n/x_n^{\alpha - \kappa} \to 0$ for some $\kappa > 0$. Then the following relation holds
\beam\label{eq:cp}
 \lim_{n \to  \infty} \frac{\P(\|\bfX_{[0,n]}\|_p > x_n)}{n \,\P(|\bfX_0| > x_n) } = c(p) \,,
\eeam
where $c(p)$ is given in \eqref{eq:dec10a}.
\end{lemma}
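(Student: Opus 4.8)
The plan is to establish \eqref{eq:cp} in three moves: truncate away the ``moderate'' coordinates, decompose the event $\{\|\bfX_{[0,n]}\|_p>x_n\}$ by a one-sided telescoping over the last suffix still above threshold, and sum the resulting finite-window asymptotics with the help of $\anticlust$. \emph{Truncation step.} For $p<\infty$ the truncation is additive, $\|\bfX_{[0,n]}\|_p^p=\|\underline{\bfX_{[0,n]}}_{\epsilon x_n}\|_p^p+\|\ov{\bfX_{[0,n]}}^{\epsilon x_n}\|_p^p$. If $p>\alpha$, $\conditionsmallsets{p}$ is automatic and gives $\P(\|\ov{\bfX_{[0,n]}}^{\epsilon x_n}\|_p>\delta x_n)=o(n\,\P(|\bfX_0|>x_n))$ (cf.\ Remark~\ref{rem:may27}); if $p\le\alpha$, the growth hypotheses $n/x_n^p\to0$ ($p<\alpha$) and $n/x_n^{\alpha-\kappa}\to0$ ($p=\alpha$) render the centering $\E[\|\ov{\bfX_{[0,n]}}^{\epsilon x_n}\|_p^p]=(n+1)\,\E[|\ov{\bfX_0}^{\epsilon x_n}|^p]$ negligible relative to $x_n^p$ by Karamata's theorem, which converts the centred $\conditionsmallsets{p}$ of \eqref{eq:june2aa} into the same centre-free bound; if $p=\infty$ the term is $\le\epsilon x_n$. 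Hence, up to errors vanishing after $n\to\infty$, then $\epsilon\to0$, then $\delta\to0$, it suffices to prove $\P(\|\underline{\bfX_{[0,n]}}_{\epsilon x_n}\|_p>t_n)\sim c(p)\,n\,\P(|\bfX_0|>x_n)$ with $t_n=(1-\delta)^{1/p}x_n$ (upper bound) and $t_n=x_n$ (lower bound); that is, it suffices to handle the process keeping only the coordinates exceeding $\epsilon x_n$.

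\emph{Telescoping step.} Put $Z_t=\underline{\bfX_t}_{\epsilon x_n}$. Since $l\mapsto\|Z_{[l,n]}\|_p$ is non-increasing, on $\{\|Z_{[0,n]}\|_p>t_n\}$ there is a largest index $j^*$ with $\|Z_{[j^*,n]}\|_p>t_n$, and after truncation $\|Z_{[j^*+1,n]}\|_p\le t_n<\|Z_{[j^*,n]}\|_p$ forces $Z_{j^*}\neq0$, i.e.\ $|\bfX_{j^*}|>\epsilon x_n$. Decomposing over $j^*$ and using stationarity of $(Z_t)$,
\[
\P\big(\|Z_{[0,n]}\|_p>t_n\big)=\sum_{l=0}^{n}\P\big(\|Z_{[0,l]}\|_p>t_n\ge\|Z_{[1,l]}\|_p\big).
\]
For fixed $l$, the $l$-th term equals $\P(|\bfX_0|>\epsilon x_n)$ times the conditional probability given $\{|\bfX_0|>\epsilon x_n\}$; by \eqref{eq:may27a} and the continuous mapping theorem (the map sending a finite block to the $\ell^p$-norm of its coordinates of modulus exceeding $1$ is a.e.\ continuous under the law of $Y\bfTh_{[0,l]}$, as the Pareto($\alpha$) variable $Y$ has a density), this conditional probability converges. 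The factor $\P(|\bfX_0|>\epsilon x_n)\sim\epsilon^{-\alpha}\P(|\bfX_0|>x_n)$ cancels the $\epsilon^{\alpha}$ produced by the Pareto tail of $Y$, and the truncation at $\epsilon x_n$ dissolves as $\epsilon\to0$ because a large $Y$ forces $Y|\bfTh_t|>1$ wherever $\bfTh_t\neq0$; one obtains, after $n\to\infty$, then $\epsilon\to0$, then $\delta\to0$,
\[
\frac{\P\big(\|Z_{[0,l]}\|_p>t_n\ge\|Z_{[1,l]}\|_p\big)}{\P(|\bfX_0|>x_n)}\longrightarrow \tilde d_l:=\E\big[\|\bfTh_{[0,l]}\|_p^\alpha-\|\bfTh_{[1,l]}\|_p^\alpha\big].
\]
Passing from $\tilde d_l$ to $\tilde d_{l+1}$ adds the same coordinate $|\bfTh_{l+1}|^p$ to $\|\bfTh_{[0,l]}\|_p^p$ and $\|\bfTh_{[1,l]}\|_p^p$, so by the concavity (for $p\ge\alpha$) resp.\ convexity (for $p<\alpha$) of $u\mapsto u^{\alpha/p}$, $\tilde d_l$ is non-increasing resp.\ non-decreasing in $l$, with $\tilde d_l\to\E[\|(\bfTh_t)_{t\ge0}\|_p^\alpha-\|(\bfTh_t)_{t\ge1}\|_p^\alpha]$; this limit equals $c(p)$ by the time-change identity \eqref{eq:dec11b} and may be $+\infty$ when $p<\alpha$, consistent with the statement (which does not assume $c(p)<\infty$).

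\emph{Summation step --- the main obstacle.} It remains to pass from $\sum_{l=0}^n\P(\|Z_{[0,l]}\|_p>t_n\ge\|Z_{[1,l]}\|_p)$ to its leading term $n\,c(p)\,\P(|\bfX_0|>x_n)$, i.e.\ to interchange the Ces\`aro average over $l$ with the limit in $l$, with no mixing hypothesis available. This is exactly where $\anticlust$ enters, in place of the two-sided condition \eqref{eq:twosided}: conditionally on $\{|\bfX_0|>\epsilon x_n\}$, the contribution to $\|Z\|_p$ of the coordinates at distance larger than $K$ from the anchor is negligible, uniformly in $l$ --- a single far exceedance is ruled out by $\anticlust$, while the build-up of many moderately-large far exceedances is controlled by combining $\anticlust$ with $\conditionsmallsets{p}$ through a Markov-type bound (requiring no mixing) on $\E[\sum_{t>K}|\bfX_t/x_n|^p\1(|\bfX_t|>\epsilon x_n)\mid|\bfX_0|>\epsilon x_n]$. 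Consequently each normalized summand differs from its window-$K$ analogue by $o_K(1)$ uniformly in $l$; inserting this into the Ces\`aro average, discarding the $O(K)=o(n)$ terms with $l\le K$, and sending $n\to\infty$, then $K\to\infty$, then $\epsilon\to0$, then $\delta\to0$, gives the limit $\lim_{l\to\infty}\tilde d_l=c(p)$. The case $p=\infty$ is handled identically, with $\|\cdot\|_p^p$ replaced throughout by $\max|\cdot|$ and the condition $\conditionsmallsets{\infty}$ and the growth restrictions then vacuous. The hard point, and the reason the one-sided $\anticlust$ has to be coupled with $\conditionsmallsets{p}$, is precisely this uniform, mixing-free control of the far coordinates in the summation step.
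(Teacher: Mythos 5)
Your proof is correct and follows essentially the same route as the paper's: the same $\epsilon$-truncation disposed of by $\conditionsmallsets{p}$ (with the growth conditions on $(x_n)$ making the centering negligible), the same decomposition --- your last-exceedance anchor identity coincides with the paper's telescoping sum, since $\1(\|Z_{[0,l]}\|_p>t_n)-\1(\|Z_{[1,l]}\|_p>t_n)=\1(\|Z_{[0,l]}\|_p>t_n\ge\|Z_{[1,l]}\|_p)$ for the nested truncated norms --- the same localization to a fixed window via $\anticlust$ followed by a Ces\`aro argument, and the same identification of the limit through the tail process and \eqref{eq:dec11b}. The only superfluous element is the proposed conditional moment bound on $\E\big[\sum_{t>K}|\bfX_t/x_n|^p\,\1(|\bfX_t|>\epsilon x_n)\,\big|\,|\bfX_0|>\epsilon x_n\big]$: after truncation at level $\epsilon x_n$ any nonzero coordinate of your process $Z$ beyond the window is itself an exceedance of $\epsilon x_n$, so the event $\{\|\bfX_{[K,n]}\|_\infty>\epsilon x_n\}$, controlled directly by $\anticlust$, already annihilates all far coordinates, and no such bound (which would not obviously follow from the stated assumptions) is needed --- this is exactly how the paper closes the summation step.
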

We recall from Remark~\ref{rem:may27} that \eqref{eq:june2aa} in $\conditionsmallsets{p}$ 
is always satisfied for $p>\a$. Moreover, for $p\le \a$, under the growth conditions on $(x_n)$ in Theorem~\ref{thm:main:theorem}, centering with the expectation in   \eqref{eq:june2aa} is not necessary.
\par
{
\par
\par
\begin{proposition}\label{Prop:large_deviations}
Assume the conditions of Lemma~\ref{prop:spt3:finite_constant}.
 Then,
\beam \label{eq:prop_LD2}
& &\P(x_n^{-1}\bfX_{[0,n]} \in \cdot \, | \, \|\bfX_{[0,n]}\|_p > x_n) \stw
\P(Y \bfQ^{(p)} \in \cdot ), \quad \nto\,,  \nonumber \\
& &
\eeam
in the space $(\tilde \ell^p,\tilde d_p)$ where the Pareto$(\a)$ \rv\
$Y$ and $\bfQ^{(p)}$ are independent.
\end{proposition}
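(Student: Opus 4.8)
\textbf{Proof plan for Proposition~\ref{Prop:large_deviations}.}
The strategy is to bootstrap the scalar asymptotics of Lemma~\ref{prop:spt3:finite_constant} into the functional limit by combining a ``single big block'' localization with the finite-window polar decomposition \eqref{eq:december8a}. We may assume $c(p)<\infty$, since otherwise $\bfQ^{(p)}$ is not defined (for $p\ge\alpha$ this holds anyway). Write
\[
\mu_n(\cdot):=\P\big(x_n^{-1}\bfX_{[0,n]}\in\cdot\,\big|\,\|\bfX_{[0,n]}\|_p>x_n\big)\,.
\]
Since the normalized block has $\ell^p$-norm $>1$ and $\tilde d_p([\bfx],\bfzero)$ equals the (shift-invariant) $\ell^p$-norm of $\bfx$, every $\mu_n$ and the candidate limit avoid a fixed neighbourhood of $\bfzero$; hence it suffices to prove $\int F\,d\mu_n\to\E[F(Y\bfQ^{(p)})]$ for each bounded Lipschitz functional $F$ on $(\tilde\ell^p,\tilde d_p)$ that vanishes near $\bfzero$.

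First I would fix a big-block length $r=r_n\to\infty$ with $r_n/n\to0$ and split $\{1,\dots,n\}$ into $k_n=\lfloor n/r_n\rfloor$ consecutive blocks $\bfB^{(n)}_1,\dots,\bfB^{(n)}_{k_n}$ of length $r_n$. The crux of the argument---and what I expect to be the main obstacle---is the \emph{localization lemma}: for all $\eta,\delta>0$ there are $\epsilon>0$ and $h\ge0$ so that, with $J_n$ the label of the block of maximal $\ell^p$-norm and $I_n$ the position of the largest record, one has, conditionally on $\{\|\bfX_{[0,n]}\|_p>x_n\}$ and with probability at least $1-\eta$ for large $n$: $\|\bfB^{(n)}_{J_n}\|_p>\delta x_n$, no coordinate outside $\bfB^{(n)}_{J_n}$ exceeds $\epsilon x_n$, and $\|\bfX_{[0,n]}-\bfX_{I_n+[-h,h]}\|_p<\eta x_n$. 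In words, the norm exceedance is asymptotically carried by a bounded window. This is precisely where the one-sided condition $\anticlust$ is used, via a telescoping-sum device recovering the effective two-sided control of \eqref{eq:twosided} as in Kulik and Soulier \cite{kulik:soulier:2020}, together with stationarity; and it is where $\conditionsmallsets{p}$ enters, to discard the accumulation of coordinates below $\epsilon x_n$---automatically negligible when $p>\alpha$ by the Karamata estimate of Remark~\ref{rem:may27}, but genuinely delicate for $p\le\alpha$. Together with radial tightness---which follows from Lemma~\ref{prop:spt3:finite_constant} applied along the thresholds $tx_n$, $t\ge1$, giving $\mu_n(\{\|\bfx\|_p>t\})\to t^{-\alpha}$ by regular variation of $|\bfX_0|$---the localization lemma yields tightness of $(\mu_n)$ in $(\tilde\ell^p,\tilde d_p)$.

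It remains to identify the limit. Localization, stationarity, and a union bound (again driven by $\anticlust$, to discard the rare event that two far-apart blocks are simultaneously big) give, when tested against such $F$,
\[
\mu_n(\cdot)=\frac{k_n\,\P\big(\|\bfX_{[1,r_n]}\|_p>x_n\big)}{\P\big(\|\bfX_{[0,n]}\|_p>x_n\big)}\;\P\big(x_n^{-1}\bfX_{[1,r_n]}\in\cdot\,\big|\,\|\bfX_{[1,r_n]}\|_p>x_n\big)+o(1)\,,
\]
where the random big-block label averages out since by stationarity each block contributes equally. Two applications of Lemma~\ref{prop:spt3:finite_constant}---to $\bfX_{[0,n]}$ and to $\bfX_{[1,r_n]}$, the threshold growth conditions passing to the shorter window since $r_n\le n$---show the prefactor equals $\lfloor n/r_n\rfloor r_n/n+o(1)\to1$. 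The conditional law of the length-$r_n$ block is handled by \eqref{eq:december8a}: for each \emph{fixed} window length it converges, as $x\to\infty$, to the law of $Y$ times the finite-window spectral component, $Y$ being Pareto$(\alpha)$ and independent of it; a further application of the localization lemma inside $\bfX_{[1,r_n]}$ (the window radius $h(\epsilon,\eta)$ being uniform in $r_n$) upgrades this to the joint limit along $r_n,x_n\to\infty$. Finally $\bfQ^{(p)}(h)\std\bfQ^{(p)}$ in $(\tilde\ell^p,\tilde d_p)$ as $h\to\infty$: the family $(\bfQ^{(p)}(h))_h$ is $\tilde d_p$-Cauchy by the localization lemma applied to the windows $[0,h]$, exactly as in Proposition~\ref{prop:limitSI} for $p=\alpha$ (using $|\bfTh_t|\to0$ a.s.\ under $\anticlust$, Proposition~\ref{prop:Janssenequivalences}), and $\|\bfQ^{(p)}\|_p=1$ a.s.\ in the limit. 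The independence of $Y$ and $\bfQ^{(p)}$ and the $\alpha$-homogeneity of the limiting measure pass from \eqref{eq:december8a} to the limit, and continuity of scalar multiplication on $(\tilde\ell^p,\tilde d_p)$ gives $Y\bfQ^{(p)}(h)\std Y\bfQ^{(p)}$. Letting $\eta,\epsilon\to0$ and then $h\to\infty$ yields \eqref{eq:prop_LD2}.
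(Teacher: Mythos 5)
Your blocking strategy is not the paper's route, and as written it has genuine gaps precisely at the steps you flag as ``the crux''. The paper proves \eqref{eq:prop_LD2} directly, without any intermediate block length $r_n$: one reduces to bounded Lipschitz $f$ on $(\tilde\ell^p,\tilde d_p)$, discards the small values via $\conditionsmallsets{p}$, writes $\E[f(\underline{x_n^{-1}\bfX_{[0,n]}}_\epsilon)\1(\|\cdot\|_p>1)]$ as a telescoping sum over the starting index using stationarity, invokes $\anticlust$ to restrict to a window of fixed length $k$ around the conditioning exceedance $\{|\bfX_0|>\epsilon x_n\}$, passes to $(Y,\bfTh)$ by regular variation, lets $k\to\infty$ and then $\epsilon\downarrow 0$, and finally identifies the limit by a change of variables, the time-change formula \eqref{eq:june5a} and a second telescoping sum, which produces exactly $\E\big[\int_0^\infty f(y\,\bfTh/\|\bfTh\|_\alpha)\1(\|y\,\bfTh/\|\bfTh\|_\alpha\|_p>1)\,d(-y^{-\alpha})\big]$, i.e.\ the law of $Y\bfQ^{(p)}$ together with the representation \eqref{eq:def:cluster:process1}. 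In other words, the ``localization'' you postulate is never proved as a standalone lemma; it is absorbed into the telescoping/time-change computation.

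Concretely, three steps of your plan do not go through on the stated hypotheses. First, applying Lemma~\ref{prop:spt3:finite_constant} to the sub-block $\bfX_{[1,r_n]}$ at threshold $x_n$ requires a vanishing-small-values bound normalized by $r_n\,\P(|\bfX_0|>x_n)$; condition $\conditionsmallsets{p}$ only gives $o\big(n\,\P(|\bfX_0|>x_n)\big)$, which is useless here since $r_n/n\to 0$. For $p>\alpha$ Karamata saves you (Remark~\ref{rem:may27} works at any block length), but for $p\le\alpha$ this is a real obstruction, not a formality, so your prefactor asymptotics $\P(\|\bfX_{[1,r_n]}\|_p>x_n)\sim c(p)\,r_n\,\P(|\bfX_0|>x_n)$ is unjustified. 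Second, the localization lemma itself (a single big block carries the exceedance; no two distant blocks are simultaneously big) cannot be obtained by a union bound ``driven by $\anticlust$'': $\anticlust$ conditions on a single large coordinate, whereas for $p\le\alpha$ a large block $\ell^p$-norm need not be caused by any coordinate of order $x_n$ — ruling that out is again the small-values issue at the sub-block scale, so the argument is circular. Third, your identification step needs $\bfQ^{(p)}(h)\std\bfQ^{(p)}$ as $h\to\infty$ together with $c(p,h)/(h+1)\to c(p)$; the paper establishes this only for $p\ge\alpha$ (Lemma~\ref{lem:cc} uses subadditivity, i.e.\ $p\ge\alpha$), and for $p<\alpha$ the process $\bfQ^{(p)}$ and its distribution are \emph{defined} through the large-deviation limit plus Proposition~\ref{lem:representation}, so invoking that convergence presupposes what is to be proved. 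To make your scheme work you would either have to assume a sub-block version of $\conditionsmallsets{p}$ (with normalization $r_n\,\P(|\bfX_0|>x_n)$) or reproduce the paper's telescoping and time-change computations inside your localization lemma — at which point the blocking adds nothing.
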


\subsection*{Proof of Lemma~\ref{prop:spt3:finite_constant}}\label{subsec:lemmaya}
Choose some $\epsilon > 0$, $\delta\in (0,1)$. Since $\|a_n^{-1}\bfX_{[0,n]}\|_p^p$
is a sum of non-negative \rv s we have the following bounds via truncation
\beam\label{eq:dec11a}\lefteqn{
    \P \big( \|\underline{x_n^{-1}\bfX_{[0,n]}}_\epsilon\|_p^p > 1 )  \;\leq 
    \;\P(\|x_n^{-1}\bfX_{[0,n]}\|_p^p >1 \big)} \nonumber\\
    &\le& \P\big(\|\underline{x_n^{-1}\bfX_{[0,n]}}_\epsilon\|_p^p > (1-\delta^p) \big) + \P\big( \|\overline{x_n^{-1}\bfX_{[0,n]}}^\epsilon\|_p^p >  \delta^p \big)\,.
\eeam
By $\conditionsmallsets{p}$ and in view of Remark~\ref{rem:may27} 
we have
\beam\label{eq:june2bb}
\lim_{\epsilon\downarrow 0}\limsup_{n\to \infty} \P\big( \|\overline{x_n^{-1}\bfX_{[0,n]}}^\epsilon\|^p_p >  \delta^p 
\big)/(n \P(|\bfX_0| > x_n))=0\,.
\eeam 
Now, for any choice of $u>0$, it remains to determine the limits  
of the terms $\P \big( \|\underline{x_n^{-1}\bfX_{[0,n]}}_\epsilon\|_p^p > u )/(n \P(|\bfX_0| > x_n))$. We start with a telescoping sum \rep 
\beao
\lefteqn{\P(\|\underline{x_n^{-1}\bfX_{[0,n]}}_\epsilon\|_p^p > u)-\P(| \underline{x_n^{-1}\bfX_{0}}_\epsilon|_p^p > u )}\\
        &=& \mbox{$\sum_{i = 1}^{n}$}      \big(\P(  \|\underline{x_n^{-1}\bfX_{[0,i]}}_\epsilon\|_p^p > u) - \P(  \|\underline{x_n^{-1}\bfX_{[0,i-1]}}_\epsilon\|_p^p > u)\big)\\
&=&
\mbox{$\sum_{i = 1}^{n}$}  \E \big[ \big(\1( \| \underline{x_n^{-1}\bfX_{[0,i]}}_\epsilon\|_p^p > u )  
      -\1(\|\underline{x_n^{-1}\bfX_{[1,i]}}_\epsilon\|_p^p > u )\big)\, \1( |\bfX_0| > \epsilon x_n)\big]\,,
\eeao
where we used stationarity in the last step
and the fact that the difference of the 
indicator \fct s vanishes on $\{|\bfX_0| \le  \epsilon x_n\}$. We also observe that the second term on the \lhs\ is of the order $o(n\,\P(|\bfX_0|>x_n))$.
For any fixed $k$ write $A_k=\{\max_{k \le t \le n}|\bfX_t| > \epsilon x_n\}$.
Regular variation of $(\bfX_t)$ ensures that, as $\nto$,
\beao
\lefteqn{\dfrac{\P(\|\underline{x_n^{-1}\bfX_{[0,n]}}_\epsilon\|_p^p > u)}{n\,\P(|\bfX_0|>x_n)}}\\
        &\sim&  \epsilon^{-\alpha} \frac{1}{n} 
\sum_{i = 1}^{n} \E \big[ \1( \|\underline{x_n^{-1}\bfX_{[0,i]}}_\epsilon\|_p^p > u) -\1( \|\underline{x_n^{-1}\bfX_{[1,i]}}_\epsilon\|_p^p > u) \; \big| \; |\bfX_0| > \epsilon x_n \big] \\
        &\sim&\epsilon^{-\alpha} 
\E \big[ \big(\1(\|\underline{x_n^{-1}\bfX_{[0,k-1]}}_\epsilon\|_p^p > u) -\1(\|\underline{x_n^{-1}\bfX_{[1,k-1]}}_\epsilon\|_p^p > u)\big)\\
    &&   \times \1(A_k^c)\, \big|\,|\bfX_0| > \epsilon x_n \big]              + \epsilon^{-\alpha} O\big(\P(A_k  \,\mid \,   |\bfX_0| > \epsilon x_n )\big)\,,
\eeao
where the second term vanishes, first letting $\nto$ and then $k\to\infty$,
by virtue of $\anticlust$. Now the \regvar\ property of $(\bfX_t)$ implies that
\beao
\lefteqn{\lim_{n \to  \infty} \frac{\P(\|\underline{x_n^{-1}\bfX_{[0,n]}}_\epsilon\|_p^p > u)}{n\P(|\bfX_0| >x_n)}}\\
  &=&                      
\lim_{k \to  \infty} \epsilon^{-\alpha} \big( \P\big( \mbox{$\sum_{t = 0}^{k-1}$}         |\epsilon\,Y \bfTh_t|^p \1(|Y\bfTh_t| > 1) > u\big)\\&&\hspace{1.5cm} - \P\big( \mbox{$\sum_{t = 1}^{k-1}$}  |\epsilon Y\bfTh_t|^p \1(|Y\bfTh_t| > 1) > u ) \big) \big),
\eeao 
by a change of variable this term equals
\beao 
&=&\lim_{k\to \infty}\E\big[ \mbox{$\int_{\epsilon}^\infty$} \big( \1\Big(\mbox{$\sum_{t = 0}^{k-1}$}      |y \bfTh_t|^p \1(|y \bfTh_t| > \epsilon) > u\big)\\&&\hspace{2cm}- \1\big( \mbox{$\sum_{t = 1}^{k-1}$}       |y \bfTh_t|^p \1(|y\bfTh_t| > \epsilon) > u\big)\big)  \; d(-y^{-\alpha})\big] \\
            &=& \lim_{k \to  \infty}  \E\big[ \mbox{$\int_{0}^\infty$}  \big(\1\big(\|\underline{y\bfTh_{[0,k-1]}}_\epsilon\|_p^p > u\big) - \1\big(\|\underline{y\bfTh_{[1,k-1]}}_\epsilon\|_p^p > u\big)\big)\,d(-y^{-\alpha})\big]\,.
\eeao
In the last step we used the fact that the integrand vanishes for $0\le y\le \epsilon$. The integrand is non-negative and bounded by 
$\1(y > \epsilon)$ which is integrable. Thus we may take 
the limit as $\kto$ inside the integral to derive the quantity
\begin{align*}
            & \E\big[ \mbox{$\int_{0}^\infty$}  \Big(\1\big(\|\underline{y\bfTh_{[0,\infty]}}_\epsilon\|_p^p > u\big) - \1\big(\|\underline{y\bfTh_{[1,\infty]}}_\epsilon\|_p^p > u\big)\Big)\,d(-y^{-\alpha})\big] \,.
\end{align*}
By monotone convergence as $\epsilon\downarrow 0$ we get the limit
\beam\label{eq:june3aa}
u^{-\alpha/p}\E \left[  \|\bfTh_{[0,\infty]}\|_p^\alpha  - \|\bfTh_{[1,\infty]}\|_p^\alpha   \right] =u^{-\alpha/p}\,c(p)\,. 
\eeam
An application of this
formula  and a telescoping sum argument yield
\beam\label{eq:dec11b}
\lefteqn{\E\Big[\|(\bfTh_t)_{t\ge 0}\|^\alpha_p - \|(\bfTh_t)_{t\ge 1}\|^\alpha_p \Big] \,.}\\   
    &=&\nonumber\E\Big[\|\bfTh\|_\alpha^\alpha\big(\|(\bfTh_t)_{t\ge 0}/\|\bfTh\|_\alpha\|^\alpha_p - \|(\bfTh_t)_{t\ge 1}/\|\bfTh\|_\alpha\|^\alpha_p\big) \Big]\\
    & =& \nonumber\sum_{s\in\Z}\E\Big[|\bfTh_s|^\alpha\big(\|(\bfTh_t)_{t\ge 0}/\|\bfTh\|_\alpha\|^\alpha_p - \|(\bfTh_t)_{t\ge 1}/\|\bfTh\|_\alpha\|^\alpha_p\big) \Big] \nonumber\\
    & =& \nonumber\sum_{s\in\Z}\E\Big[ \big(\|(\bfTh_t)_{t\ge -s}/\|\bfTh\|_\alpha\|^\alpha_p - \|(\bfTh_t)_{t\ge -s+1}/\|\bfTh\|_\alpha\|^\alpha_p\big) \Big]\\
    &=& \E[\|\bfTh\|_p^\alpha/\|\bfTh\|_\a^\a]\nonumber\; = \; c(p).
\eeam
Now an appeal to \eqref{eq:dec11a} with $u=1$ and $u=1-\delta^p$
yields
\beao
c(p)&\le&
\liminf_{\nto} \dfrac{\P(\|x_n^{-1}\bfX_{[0,n]}\|_p^p >1 \big)}{n\,
\P(|\bfX_0|>x_n)}\\
&\le &\limsup_{\nto}\dfrac{\P(\|x_n^{-1}\bfX_{[0,n]}\|_p^p >1 \big)}{n\,\P(|\bfX_0|>x_n)}\le (1-\delta^p)^{-\a/p}\,c(p)\,.
\eeao
The limit relation \eqref{eq:cp} follows as $\delta\downarrow 0$.

\subsection*{Proof of Proposition~\ref{Prop:large_deviations}}\label{subsec:prold}
Consider any bounded Lipschitz-continuous \fct\ 
 $f: (\tilde{\ell^p}, {\wt d_p})  \to \mathbb{R}$.
The statement is proved if we can show that 
 \beao\lefteqn{
 \lim_{n \to \infty} \E[ f(x_n^{-1} \bfX_{[0,n]} )\, | \, \|\bfX_{[0,n]}\|_p > x_n]}\\  & =&   c(p)^{-1} \mathbb{E}\big[ \|{\bfTh}/{\|\bfTh\|_\alpha}\|^\a_p f  \big(Y \,{\bfTh}/{\|\bfTh\|_p}\big)  \big]\,.
\eeao
In view of Lemma~\ref{prop:spt3:finite_constant} it suffices to show
\beao
\lefteqn{\lim_{n \to \infty} \dfrac{\E[ f(x_n^{-1} \bfX_{[0,n]} )\,\1( \|\bfX_{[0,n]}\|_p > x_n)]}{n\,\P(|\bfX_0|>x_n)}}\\
& =&  \, \mathbb{E}\big[ \|{\bfTh}/{\|\bfTh\|_\alpha}\|^\a_p f  \big(Y \,{\bfTh}/{\|\bfTh\|_p}\big)  \big]\,.
\eeao
In these limit relations we may replace $f(x_n^{-1} \bfX_{[0,n]} )$
by $f(\underline{x_n^{-1} \bfX_{[0,n]}}_{\epsilon} )$ since by \eqref{eq:june2bb}
for  any $\delta>0$, some $K_f>0$,
\beao\lefteqn{\lim_{\vep_\downarrow 0}\limsup_{\nto}
\dfrac{\P\big(|f(x_n^{-1}\bfX_{[0,n]})-  f(\underline{x_n^{-1} 
\bfX_{[0,n]}}_{\epsilon})|>\delta\,,\|\bfX_{[0,n]}\|_p>x_n\big)}{n\,\P(|\bfX_0|>x_n)}}
\\
&\le &\lim_{\vep\downarrow 0}\limsup_{\nto}\dfrac{\P\big(K_f \,d_p(\ov{x_n^{-1} 
\bfX_{[0,n]}}^{\epsilon},{\bf0})>\delta\big)}
{n\,\P(|\bfX_0|>x_n)}\\
&\le &\lim_{\vep\downarrow 0}\limsup_{\nto}\dfrac{\P\big(K_f\,\big\|\ov{x_n^{-1} 
\bfX_{[0,n]}}^{\epsilon}\big\|_p^p>\delta(p)\big)}{n\,\P(|\bfX_0|>x_n)}=0\,,
\eeao
where $\delta(p)=\delta^p$ for $p\ge 1$ and $=\delta$ for $p\in(0,1)$.
We also have  for $\delta\in (0,1)$,
\beao G_{n,\epsilon}&=&
\dfrac{\E\big[ f\big(\underline{x_n^{-1} \bfX_{[0,n]}}_\epsilon\big )\,\big|\1\big( \|x_n^{-1}\bfX_{[0,n]}\|_p > 1\big)-\1\big( \|\underline{x_n^{-1}\bfX_{[0,n]}}_\epsilon\|_p > 1\big)\big|\big]
}{n\,\P(|\bfX_0|>x_n)}\\
&\le &c\,\dfrac{\P(\big \|x_n^{-1}\bfX_{[0,n]}\|_p > 1\ge\|\underline{x_n^{-1}\bfX_{[0,n]}}_\epsilon\|_p\big)}{n\,\P(|\bfX_0|>x_n)}\\
&\le &c\,\dfrac{\P\big( \|\ov{x_n^{-1}\bfX_{[0,n]}}^\epsilon\|_p > \delta
\big)}{n\,\P(|\bfX_0|>x_n)}
+c\,\dfrac{\P\big( 1\ge \|\underline{x_n^{-1}\bfX_{[0,n]}}_\epsilon\|_p>1-\delta\big)}{n\,\P(|\bfX_0|>x_n)}\\
&=&G_{n,\epsilon,\delta}^{(1)}+G_{n,\epsilon,\delta}^{(2)}\,.
\eeao
Applying \eqref{eq:june2bb} to $G_{n,\epsilon,\delta}^{(1)}$ and using the 
calculations in the proof of Lemma~\ref{prop:spt3:finite_constant} 
leading to \eqref{eq:june3aa} for  $G_{n,\epsilon,\delta}^{(2)}$, we conclude that
\beao
\lim_{\epsilon\downarrow 0}\limsup_{\nto} G_{n,\epsilon}
&\le& \lim_{\epsilon\downarrow 0}\limsup_{\nto} G_{n,\epsilon,\delta}^{(1)}
+ \lim_{\epsilon\downarrow 0}\limsup_{\nto} G_{n,\epsilon,\delta}^{(2)}\\
&=& 0+ c\, \big((1-\delta)^{ -\a/p}-1\big)\downarrow 0\,,\qquad \delta\downarrow 0\,.
\eeao
Thus it suffices to show
\beam\lefteqn{
\lim_{\epsilon\downarrow 0} \lim_{\nto}\dfrac{\E\big[f\big(\underline{x_n^{-1} \bfX_{[0,n]}}_\epsilon \big)\,\1\big( \|\underline{x_n^{-1} \bfX_{[0,n]}}_\epsilon\|_p > 1\big)\big]}{n\,\P(|\bfX_0|>x_n)}}\qquad\qquad\qquad\qquad\qquad\qquad\nonumber\\
&=& \mathbb{E}\big[ \|\bfTh/\|\bfTh\|_\a\|^\a_p f  \big(Y \,\bfTh/\|\bfTh\|_p\big)  \big]\,.\label{eq:dec14a}
\eeam
This is the goal of the remaining proof.
\par
Choose any $\epsilon > 0$. Noticing that $\underline{x_n^{-1} \bfX_{[0,n]}}_\epsilon = \underline{x_n^{-1} \bfX_{[1,n]}}_\epsilon$ on 
$\{|x_n^{-1}\bfX_0| \le  \epsilon\}$,
 we have 
\beao
I&:=&\E\big[  f\big(\underline{x_n^{-1} \bfX_{[0,n]}}_\epsilon \big)\,\1\big( \|\underline{x_n^{-1} \bfX_{[0,n]}}_\epsilon\|_p > 1\big) \big] \\
        & =& \E\Big[\Big( f\big(\underline{x_n^{-1} \bfX_{[0,n]}}_\epsilon \big)\,\1\big( \|\underline{x_n^{-1} \bfX_{[0,n]}}_\epsilon\|_p > 1\big)     \\
         &&\hspace{3mm}- f\big((0,\underline{x_n^{-1} \bfX_{[1,n]}}_\epsilon )\big)\,\1\big( \|\underline{x_n^{-1} \bfX_{[1,n]}}_\epsilon\|_p > 1\big)\Big)\,
\1(|\bfX_0| > \epsilon\,x_n\big) \Big]  \\
         && + \E\big[  f((0,\underline{x_n^{-1} \bfX_{[1,n]}}_\epsilon ))\,\1\big( \|\underline{x_n^{-1} \bfX_{[1,n]}}_\epsilon\|_p > 1\big) \big] \\
&=& \E\Big[\Big( f\big(\underline{x_n^{-1} \bfX_{[0,n]}}_\epsilon \big)\,\1\big( \|\underline{x_n^{-1} \bfX_{[0,n]}}_\epsilon\|_p > 1\big)     \\
         &&\hspace{3mm}- f\big((0,\underline{x_n^{-1} \bfX_{[1,n]}}_\epsilon )\big)\,\1\big( \|\underline{x_n^{-1} \bfX_{[1,n]}}_\epsilon\|_p > 1\big)\Big)\,
\1(|\bfX_0| > \epsilon\,x_n\big) \Big]  \\&&+
\E\big[  f\big((0,\underline{x_n^{-1} \bfX_{[0,n-1]}}_\epsilon )\big)\,
\1\big( \|\underline{x_n^{-1} \bfX_{[0,n-1]}}_\epsilon\|_p > 1\big) \big]\,.
\eeao
where we used the stationarity in the last step. Using the same idea 
recursively, we obtain
\beao
I&=&
       \mbox{$\sum_{j = 1}^{n}$} \E\big[\big(  f\big(({\bf0}^{n-j},\underline{x_n^{-1} \bfX_{[0,j]}}_\epsilon )\big)\,\1\big( \|\underline{x_n^{-1} \bfX_{[0,j]}}_\epsilon\|_p > 1\big)     \\
        & & \hspace{10mm}- f\big(({\bf0}^{n-j +1},\underline{x_n^{-1} \bfX_{[1,j]}}_\epsilon )\big)\,\1\big( \|\underline{x_n^{-1} \bfX_{[1,j]}}_\epsilon\|_p > 1\big)\Big)\, \1(|\bfX_0| > \epsilon\,x_n) \big]  \\[2mm]
         && + \E\big[  f\big(({\bf0}^{n} ,\underline{x_n^{-1} \bfX_{0}}_\epsilon )\big)\,\1\big(|\bfX_{0}|>\epsilon\,x_n\big)\big]\,,
\eeao
where $\bfzero^k := \{0\}^k$ for $k \ge 1$.
By regular variation of $\bfX_0$ the last right-hand term is
$o(n\,\P(|\bfX_0|>x_n))$.
Therefore by \regvar\ of $(\bfX_t)$ we obtain as $\nto$,
\beao
\lefteqn{I/\big(
n \P( |\bfX_0| > x_n)\big)} \\
&\sim&  \frac{ \epsilon^{-\alpha}}{n} 
    \mbox{$\sum_{j = 1}^{n}$} \E\big[  f\big(({\bf0}^{n-j},\underline{x_n^{-1} \bfX_{[0,j]}}_\epsilon )\big)\,\1\big( \|\underline{x_n^{-1} \bfX_{[0,j]}}_\epsilon\|_p > 1\big)     \\
      &   & \hspace{1.4cm}- f\big(({\bf0}^{n-j +1},\underline{x_n^{-1} \bfX_{[1,j]}}_\epsilon )\big)\,\1\big( \|\underline{x_n^{-1} \bfX_{[1,j]}}_\epsilon\|_p > 1\big) \, \big| \, |\bfX_0| > \epsilon\,x_n \big]\\
&=:&II \,.
\eeao
Write  $A_k=\{\|\bfX_{[k,n]}\|_\infty> \epsilon\,x_n\}$ for fixed $k\ge 1$.
 By $\anticlust$, $\P(A_k\mid |\bfX_0|> \epsilon\, x_n)$ vanishes by first letting $\nto$ then $\kto$.
Since each of the summands in II is uniformly bounded in absolute value
we may restrict the summation to $j\in\{k-1,\ldots,n\}$ for any fixed
$k\ge 1$.
Therefore we have as $\nto$,  
\beao
\lefteqn{II- O\big( 
\P(A_k \mid |\bfX_0| > \epsilon\,x_n )\big)
 }\\&\sim&
\frac{ \epsilon^{-\alpha}}{n} 
\sum_{j = k-1}^{n} \E\big[ \big( f\big(({\bf0}^{n-j},\underline{x_n^{-1} \bfX_{[0,j]}}_\epsilon )\big)\,\1\big(\{ \|\underline{x_n^{-1} \bfX_{[0,j]}}_\epsilon\|_p > 1\}\big)     \\
      &   & - f\big(({\bf0}^{n-j +1},\underline{x_n^{-1} \bfX_{[1,j]}}_\epsilon )\big)\,\1\big( \{\|\underline{x_n^{-1} \bfX_{[1,j]}}_\epsilon\|_p > 1\}\big)\Big)\,\1(A_k^c)\big| \, |\bfX_0| > \epsilon\,x_n \big]\\
&=&
\frac{ \epsilon^{-\alpha}}{n} \sum_{j = k-1}^{n}  \E\big[  f\big(({\bf0}^{n-j},\underline{x_n^{-1} \bfX_{[0,k-1]}}_\epsilon,{\bf0}^{j-k} )\big)\,
\1\big( \|\underline{x_n^{-1} \bfX_{[0,k-1]}}_\epsilon\|_p > 1\big)     \\
         & & \hspace{3mm}- f\big(({\bf0}^{n-j +1},\underline{x_n^{-1} \bfX_{[1,k-1]}}_\epsilon, {\bf0}^{j-k} )\big)\,\1\big( \|\underline{x_n^{-1} \bfX_{[1,k-1]}}_\epsilon\|_p > 1\big)\, \big|\, |\bfX_0| > \epsilon\,x_n \big]\,.
\eeao
Next we apply shift-invariance and \regvar\ in $\tilde \ell^p$:
\beao
& \sim         & \epsilon^{-\alpha} \E\big[  
f\big(\underline{x_n^{-1} \bfX_{[0,k-1]}}_\epsilon \big)\,\1\big( \|\underline{x_n^{-1} \bfX_{[0,k-1]}}_\epsilon\|_p > 1\big) 
       \\  &&\hspace{6mm}- f\big(\underline{x_n^{-1} \bfX_{[1,k-1]}}_\epsilon \big)\,
\1\big( \|\underline{x_n^{-1} \bfX_{[1,k-1]}}_\epsilon\|_p > 1\big) \, \big| \, 
|\bfX_0| > \epsilon\,x_n \big] \\
         &\to & \epsilon^{-\alpha} 
\E\big[  f\big(\underline{ \epsilon \,Y\,\bfTh_{[0,k-1]}}_\epsilon \big)\,
\1\big( \|\underline{\epsilon Y\,\bfTh_{[0,k-1]}}_\epsilon\|_p > 1\big)\\     
&&\hspace{7mm}- f\big(\underline{\epsilon\,Y \bfTh_{[1,k-1]}}_\epsilon \big)\,\1\big( \|\underline{ \epsilon \bfY_{[1,k-1]}}_\epsilon\|_p > 1\big)\big]\\
   &= &   \E\big[  f\big(\epsilon 
\, \underline{  Y\,\bfTh_{[0,k-1]}}_1 \big)\,\1\big( \|\epsilon  \,\underline{ \bfY_{[0,k-1]}}_1\|_p > 1\big)     
    \\&&\hspace{7mm}     - f\big(\epsilon \,  \underline{ \bfY_{[1,k-1]}}_1 \big)\,\1\big( 
\epsilon \,\|\underline{ Y \bfTh_{[1,k-1]}}_1\|_p > 1 \big)\big]=:J_{k,\epsilon}  \,.
\eeao
By Proposition~\ref{prop:Janssenequivalences} we have 
$\|\bfTh\|_\a<\infty$ a.s., $|\bfTh_t|\stas 0$ as $|t|\to\infty$, hence 
$T := \inf_{t \ge 0} \{t : Y\,|\bfTh_t|< 1 \}<\infty$ a.s. Then by monotone 
\con\ as $k\to\infty$,
\beao
         J_{k,\epsilon} & =& \epsilon^{-\alpha} \E\big[\big(  f\big(\epsilon \,\underline{ Y \bfTh_{[0,\infty]}}_1 \big)\,\1\big( \|\epsilon  \,\underline{ Y\,\bfTh_{[0,\infty]}}_1\|_p > 1\big)     \\
       &  & \hspace{9mm} - f\big(\epsilon \,\underline{ Y\,\bfTh_{[1,\infty]}}_1 \big)\,\1\big( \|\epsilon  \,\underline{  Y\,\bfTh_{[1,\infty]}}_1\|_p > 1\big)\big)  
\1(T < k)\big]\\&& +O(\P(T\ge k))\\
&\to& \epsilon^{-\alpha} \E\big[ f\big(\epsilon \,\underline{ Y \bfTh_{[0,\infty]}}_1 \big)\,\1\big( \|\epsilon  \,\underline{ Y\,\bfTh_{[0,\infty]}}_1\|_p > 1\big)     \\
       &  & \hspace{7mm} - f\big(\epsilon \,\underline{ Y\,\bfTh_{[1,\infty]}}_1 \big)\,\1\big( \|\epsilon  \,\underline{  Y\,\bfTh_{[1,\infty]}}_1\|_p > 1\big)\big]\\
&=&\int_0^{\infty}  \E\big[  f\big(\underline{  y\,\bfTh_{[0,\infty]}}_\epsilon \big)\,\1\big( \|\underline{y\, \bfTh_{[0,\infty]}}_\epsilon\|_p > 1\big)\\&&  \hspace{8mm}    - 
f\big(\underline{ y\,\bfTh_{[1,\infty]}}_\epsilon \big)\,\1\big( \|\underline{  y\,\bfTh_{[1,\infty]}}_\epsilon\|_p > 1\big)\big]\,d(-y^{-\alpha})
=:J_\epsilon \,.
\eeao
In the last step we changed variables, $u = \epsilon y$, and observed that 
the integrand vanishes for $y < \epsilon$.
\par
Finally, we want to let $\epsilon\downarrow 0$.
We start by inter-changing expectation and integral in $J_\epsilon$,
and change variables, $u = y \|\bfTh_{[0,\infty]}\|_\alpha$, in the first term
of the integrand and then proceed similarly for the second term
with the convention that it is zero on $\{\|\bfTh_{[1,\infty]}\|_\a=0\}$:
\beao
   J_\epsilon          & =& \E\Big[  \int_0^{\infty}\Big(\|\bfTh_{[0,\infty]}\|_\alpha^\alpha 
         \,  f\Big(  \underline{   \tfrac{y\,\bfTh_{[0,\infty]}}{ \|\bfTh_{[0,\infty]}\|_\alpha} }_{\epsilon}  \Big)\,
        \1\Big( \big \| \underline{   \tfrac{y\,\bfTh_{[0,\infty]}}{ \|\bfTh_{[0,\infty]}\|_\alpha} }_{\epsilon} \big\|_p > 1\Big)      \\
         & &-  \|\bfTh_{[1,\infty]}\|_\alpha^\alpha  \,f\Big( \underline{   \tfrac{y\,\bfTh_{[1,\infty]}}{ \|\bfTh_{[1,\infty]}\|_\alpha} }_{\epsilon} \Big)\,
         \1\Big( \big \| \underline{   \tfrac{y\,\bfTh_{[1,\infty]}}{ \|\bfTh_{[1,\infty]}\|_\alpha} }_{\epsilon} \big\|_p > 1\Big)\Big) \, d(-y^{-\alpha}) \Big]\\
          &  = & \sum_{t = 1}^{\infty}   \int_0^{ \infty} 
 \E\Big[ |\bfTh_t|^\alpha \Big(
           f\Big(  \underline{   \tfrac{y\,\bfTh_{[0,\infty]}}{ \|\bfTh_{[0,\infty]}\|_\alpha} }_{\epsilon}  \Big)\,
        \1\Big( \big \| \underline{   \tfrac{y\,\bfTh_{[0,\infty]}}{ \|\bfTh_{[0,\infty]}\|_\alpha} }_{\epsilon} \big\|_p > 1\Big)   \\
         & &-   f\Big(  \underline{   \tfrac{y\,\bfTh_{[1,\infty]}}{ \|\bfTh_{[1,\infty]}\|_\alpha} }_{\epsilon}  \Big)\,
        \1\Big( \big \| \underline{   \tfrac{y\,\bfTh_{[1,\infty]}}{ \|\bfTh_{[1,\infty]}\|_\alpha} }_{\epsilon} \big\|_p > 1\Big) \Big)    \Big] d(-y^{-\alpha}) \\
         &&  +
          \E\Big[
        \int_0^{\infty}     f\left(  \underline{   \tfrac{y\,\bfTh_{[0,\infty]}}{ \|\bfTh_{[0,\infty]}\|_\alpha} }_{\epsilon}  \right)
        \1\left( \big \| \underline{   \tfrac{y\,\bfTh_{[0,\infty]}}{ \|\bfTh_{[0,\infty]}\|_\alpha} }_{\epsilon} \big\|_p > 1\right) d(-y^{-\alpha})  \Big]\,.
\eeao
Next we apply the time-change formula \eqref{eq:june5a}
to each 
summand. 
\beao
     J_\epsilon     &=& \sum_{t = 1}^{\infty} \int_0^{ \infty} \E\Big[  
            f\Big(  \underline{   \tfrac{y\bfTh_{[-t,\infty]}}{ \|\bfTh_{[-t,\infty]}\|_\alpha} }_{\epsilon}  \Big)\,
        \1\Big( \big \| \underline{   \tfrac{y\bfTh_{[-t,\infty]}}{ \|\bfTh_{[-t,\infty]}\|_\alpha} }_{\epsilon} \big\|_p > 1\Big)     \\
         && -   f\Big(  \underline{   \tfrac{y\bfTh_{[1-t,\infty]}}{ \|\bfTh_{[1-t,\infty]}\|_\alpha} }_{\epsilon}  \Big)\,
        \1\Big( \big \| \underline{   \tfrac{y\bfTh_{[1-t,\infty]}}{ \|\bfTh_{[1-t,\infty]}\|_\alpha} }_{\epsilon} \big\|_p > 1\Big)   \Big]  d(-y^{-\alpha}) \\
         && +
          \E\Big[  
        \int_0^{ \infty}     f\Big(  \underline{   \tfrac{y\bfTh_{[0,\infty]}}{ \|\bfTh_{[0,\infty]}\|_\alpha} }_{\epsilon}  \Big)\,
        \1\Big( \big \| \underline{   \tfrac{y\bfTh_{[0,\infty]}}{ \|\bfTh_{[0,\infty]}\|_\alpha} }_{\epsilon} \big\|_p > 1\Big) d(-y^{-\alpha})  \Big]\,.
\eeao
This is a telescoping sum in $t$ with value
\beao
         J_\epsilon & =&
\E\Big[  
        \int_0^{ \infty}     f\Big(  \underline{   \tfrac{y\,\bfTh}{ \|\bfTh\|_\alpha} }_{\epsilon}  \Big)\,
        \1\Big( \big \| \underline{   \tfrac{y\,\bfTh}{ \|\bfTh\|_\alpha} }_{\epsilon} \big\|_p > 1\Big) d(-y^{-\alpha})  \Big]\,.
\eeao
By monotone \con\ we have
\beam\label{eq:change:of:norm}
\lim_{\epsilon\downarrow 0} J_\epsilon= \E\Big[  
        \int_0^{ \infty}     
f\big( y \tfrac{\bfTh}{\|\bfTh\|_\alpha}   \big)\,
        \1\Big( \big \| y \tfrac{\bfTh}{\|\bfTh\|_\alpha}  
 \big\|_p > 1\Big) d(-y^{-\alpha})  \Big]\,.
\eeam
Combining the arguments above,
we proved \eqref{eq:dec14a} as desired. \qed

{
\subsection{Proof of Lemma \ref{prop2:spt3:finite_constant}}\label{subsec:lemmay}
{\bf The case $p < \a$.}
Choose some $\epsilon > 0$, $\delta\in (0,1)$. We have the following bounds via truncation
\beao
I_1-I_2&:=&
    \P \big( \|\underline{x_n^{-1}\bfX_{[0,n]}}_\epsilon\|_p^p-\E\big[\|\underline{x_n^{-1}\bfX_{[0,n]}}_\epsilon\|_p^p\big] > 1 + \delta^p\big)\\&&-
\P\big(\|\ov{x_n^{-1}\bfX_{[0,n]}}^\epsilon\|_p^p-\E\big[\|\ov {x_n^{-1}\bfX_{[0,n]}}^\epsilon\|_p^p\big]\le -\delta^p
\big)\\&\leq& 
    \;\P(\|x_n^{-1}\bfX_{[0,n]}\|_p^p - \E\big[\|x_n^{-1}\bfX_{[0, n]}\|_p^p\big]>1 \big) \nonumber\\
    &\le& \P\big(\|\underline{x_n^{-1}\bfX_{[0,n]}}_\epsilon\|_p^p-\E\big[\|\underline{x_n^{-1}\bfX_{[0,n]}}_\epsilon\|_p^p
\big]
 > 1-\delta^p \big)\\&&
 + \P\big( 
\|\overline{x_n^{-1}\bfX_{[0,n]}}^\epsilon\|_p^p-\E\big[\|\overline{x_n^{-1}\bfX_{[0,n]}}^\epsilon\|_p^p\big] >  \delta^p \big)
=:I_3+I_4 \, .\eeao
Taking into account $\conditionsmallsets{p}$ for $p < \a$, we have
\[\lim_{\epsilon\downarrow 0}\limsup_{\nto}(I_2+I_4)/(n\,\P(|\bfX_0|>x_n))=0.\]
Moreover, 
we observe that by Karamata's theorem for $p<\a$
\beao
\E\big[\|\underline{x_n^{-1}\bfX_{[0,n]}}_\epsilon\|_p^p\big]&=&
 n \E\big[|\bfX_0/x_n|^p\,\1(|\bfX_0|>\epsilon x_n)\big]\\&=&
O\big( n\,\P(|\bfX_0|>\epsilon x_n)\big)=o(1)\,.
\eeao
Thus centering in $I_1$ and $I_3$ is not needed, and one can follow the
lines of the proof of Lemma \ref{prop:spt3:finite_constant} to conclude. \\[1mm]
{\bf The case $p = \a$.} It requires only slight changes; we omit details.
\qed

\subsection{Proofs of the results of Section~\ref{sec:spectral:cluster:process}}\label{sec:proof:sec:3}

\subsubsection{Proof of Proposition~\ref{lem:representation}}\label{subsec:lemma:representation:proof}
The representation {\eqref{eq:def:cluster:process} follows by identifying $\lim_{\epsilon \downarrow 0} J_\epsilon$ } as on the right-hand side of \eqref{eq:change:of:norm}. In particular, taking {$f$ as the} constant map $(\bfx_t) \mapsto 1$ in \eqref{eq:def:cluster:process} we obtain the representation of the constant $c(p)$ in \eqref{eq:dec10a}.

\subsubsection{Proof of Proposition~\ref{prop:limitSI}}\label{subsec:cor32}
Our goal is first to relate
the \seq\ of spectral components $(\bfQ^{(p)}(h))_{h\ge 0}$ to $(\bfTh_t)$.
{We start with} two auxiliary results whose proofs are given at the end
of this section.

\ble\label{prop:constant_term_rho}
Let $(\bfX_t)$ be a stationary time series satisfying $\rvalpha$.
Then for $h \ge 0$, 
\beam\label{eq:limitlaw}
\P( \bfQ^{(p)}(h) \in \cdot) =
   \dfrac1 { c(p,h)}\,  \sum_{k = 0}^{h} 
             \E\Big[       \frac{\|\bfTh_{-k+[0,h]}\|_p^\alpha}{\|\bfTh_{-k+[0,h]}\|^\alpha_\alpha}   \1 \Big({  \tfrac{\bfTh_{-k+[0,h]}}{\|\bfTh_{-k+[0,h]}\|_p}}   \in \cdot  \Big)\Big]\,, \nonumber\\
\eeam
where $
    c(p,h) := \sum_{k=0}^{h}\E[\|\bfTh_{-k+[0,h]}\|_p^\alpha/\|\bfTh_{-k+[0,h]}\|_\alpha^\alpha]$.
In particular, $c(\alpha,h) = h+1$ and 
\begin{equation}\label{eq:unif:shift}
\P( \bfQ^{(\alpha)}(h) \in \cdot) = \P\big( \bfTh_{-U^{(h)} + [0,h]}/ \|\bfTh_{-U^{(h)} + [0,h]}\|_\alpha
 \in \cdot \big)\,,
\end{equation}
where $U^{(h)}$ is uniformly distributed on  $\{0,\dots,h\}$ and independent of $\bfTh$.
\ele
\ble\label{lem:cc} Assume $|\bfTh_t|\to 0$ as $t\to\infty$ and 
let $f:\tilde{\ell}^{\a} \cap\{\bfx : \|\bfx\|_p = 1\} \to (0,\infty)$ 
be any bounded Lipschitz-continuous function in 
$(\tilde{\ell}^{\a}, \tilde d_{\a})$. Then, for every $p \ge \a$,
\beam\label{eq:cc}
    &\frac{c(p,h)}{h+1}\,\E[f(\bfQ^{(p)}(h))] \to  \,\E[\|\bfTh/\|\bfTh\|_\a\|_p^{\a} f(\bfTh/\|\bfTh\|_p )]\,,
\eeam
as $h \to + \infty$
\ele

We conclude from \eqref{eq:cc} for $f(\bfx)\equiv 1$ that 
$\lim_{h \to \infty}c(p,h)/(h+1) = c(p)$. If
$0< c(p)< \infty $,
\beam\label{eq:lim}
\lim_{h\to\infty}\E[f(\bfQ^{(p)}(h))] &=& c(p)^{-1} \,\E[\|\bfTh/\|\bfTh\|_\a\|^\a_p f(\bfTh/\|\bfTh\|_p )].
\eeam 
Finally, the portmanteau theorem yields $\bfQ^{(p)}(h)\std \bfQ^{(p)}(\infty)$
in $(\tilde{\ell}^p \cap \{\bfx : \|\bfx\|_p = 1\}, \tilde{d}_p)$ where $\bfQ^{(p)}(\infty)$ is well defined {in view of} the right-hand side of \eqref{eq:lim}.
This finishes the proof of the proposition. \qed  
\begin{proof}[Proof of Lemma~\ref{prop:constant_term_rho}] 
If  $\|\bfX_{[0,h]}/x\|_p>1$ then  for sufficiently small $\epsilon> 0$,
$\|\bfX_{[0,h]}/x\|_\infty>\epsilon$.  Therefore, on $\{\|\bfX_{[0,h]}/x\|_p>1\}$,
\beao
\sum_{i = 0}^{h} |\bfX_{i}/x|^\a\,\1\big(
|\bfX_i/x|>\epsilon\big) >0 \,.
\eeao
Using stationarity, we obtain
\beao
\lefteqn{\P(\|\bfX_{[0,h]}/x\|_p>1)} \\&=&
   \sum_{i = 0}^{h}
            \E\Big[ \dfrac{|\bfX_i/x|^\alpha\1\big(|\bfX_i/x| > \epsilon \big) }{\sum_{t = 0}^{h} |\bfX_{t}/x|^\a\,\1\big(
|\bfX_t/x|> \epsilon \big)}\1(\|\bfX_{[0,h]}/x\|_p>1)
\Big]\\
&=&   \sum_{i = 0}^{h} 
            \E\Big[ \frac{|\bfX_0/x|^\alpha\1\big(|\bfX_0/x| >  \epsilon \big) }{\sum_{t = -i}^{h-i} |\bfX_{t}/x|^\a\,\1\big(
|\bfX_t/x|> \epsilon \big)}  
            \1( \|\bfX_{[-i,h-i]}/x\|_p> 1 )\Big]\\ 
        & = &\P(|\bfX_0| > x \epsilon  )
            \sum_{i = 0}^{h}\E \Big[ \frac{|\bfX_0/x|^\alpha \1( \|\bfX_{[-i,h-i]}/x\|_p> 1 )}{ \sum_{t = -i}^{h-i} |\bfX_{t}/x|^\a\,\1\big(
|\bfX_t/x|> \epsilon \big)}  \; 
               \Big|\; |\bfX_0| > x\epsilon  \Big] \,.
\eeao
Applying the definition \eqref{eq:may27a} of \regvar\ and dominated \con ,
we obtain as $\xto$,
\beao
\dfrac{\P(\|\bfX_{[0,h]}/x\|_p>1)}{\P(|\bfX_0|>x)}
&\to& \epsilon^{-\a}
\sum_{i = 0}^{h}\E \Big[ \frac{|\epsilon Y\,\bfTh_0|^\alpha \1( \|\epsilon Y\,\bfTh_{[-i,h-i]}\|_p>  1)}{ \sum_{t = -i}^{h-i} |\epsilon Y\,\bfTh_{t}|^\a\,\1\big(
|Y\,\bfTh_t|>1\big)
} \Big]\\
&=& 
\sum_{i = 0}^{h}\int_\epsilon^\infty \E \Big[  \frac{\1( y\,\|\bfTh_{[-i,h-i]}\|_p> 1)  }{ \sum_{t = -i}^{h-i} |\bfTh_{t}|^\a\,\1\big(
y\,|\bfTh_t|>\epsilon  \big)
}  \Big] \, d(-y^{-\a})\,.
\eeao
The \lhs\ does not depend on $\epsilon$. Therefore,
letting $\epsilon\downarrow 0$, we arrive at
\beam\label{eq:may27aa}
\lim_{\xto} \dfrac{\P(\|\bfX_{[0,h]}/x\|_p>1)}{\P(|\bfX_0|>x)}\nonumber&=&
\sum_{i = 0}^{h}\int_0^\infty \E \Big[\dfrac{ \1( y\,\|\bfTh_{[-i,h-i]}\|_p> 1)}{ \|\bfTh_{[-i,h-i]}\|_\a^\a
}   
              \Big] \, d(-y^{-\a})\nonumber\\
&=&  \sum_{i = 0}^{h}\E \Big[\dfrac{\|\bfTh_{-i+[0,h]}\|_p^\a}{ \|\bfTh_{-i+[0,h]}\|_\a^\a\,
}\Big]
=c(p,h)\,.
\eeam
This constant is finite since $\|\bfTh_{i+[0,h]}\|_p\le (h+1)\|\bfTh_{i+[0,h]}\|_\infty$.  
\par
Next we prove \eqref{eq:limitlaw}. For this reason,
let $A$ be a continuity set \wrt\ the limit law in \eqref{eq:limitlaw}.
An appeal to \eqref{eq:may27aa} yields
\beao
\lefteqn{   c(p,h)\, \mathbb{P}( x^{-1} \bfX_{[0,h]} \in A \;|\; 
\|\bfX_{[0,h]}\|_p > x)}\\
   &\sim& \frac{\mathbb{P}( x^{-1} \bfX_{[0,h]} \in A\,,  
\|\bfX_{[0,h]}\|_p > x)}{\P(|\bfX_0| > x)}=: I(x)\,. 
\eeao
Proceeding as for the derivation of \eqref{eq:may27aa}, we obtain
\beao
   I(x)&\sim & 
        \int_0^\infty  \sum_{i = 0}^{h}  
\E\Big[ \frac{\1(y\,\|\bfTh_{[-i,h-i]}\|_p > 1 ) }{\|\bfTh_{[-i,h-i]}\|_\alpha^\alpha} \,\1(y\,\bfTh_{[-i,h-i]} \in A )\Big]\,d(-y^{-\alpha})\\&
        =&\int_{1}^{\infty} 
\sum_{i = 0}^{h} \E\Big[\dfrac{\|\bfTh_{[-i,h-i]}\|_p^\a}
{\|\bfTh_{[-i,h-i]}\|_\alpha^\a}\, \1\Big(y\dfrac{\bfTh_{[-i,h-i]}}{\|\bfTh_{[-i,h-i]}\|_p} \in A \Big)\Big]d(-y^{-\alpha})\,.
\eeao
In the last step we changed the variable, $u = y\,\|\bfTh_{[-i,h-i]}\|_p>0$ a.s., observing that
$\|\bfTh_{[-i,h-i]}\|_p \ge |\bfTh_0|=1$. This proves~\eqref{eq:limitlaw} and the lemma. 
\end{proof}

\begin{proof}[Proof of Lemma~\ref{lem:cc}]
Assume $f:\tilde{\ell}^{\a} \cap\{\bfx : \|\bfx\|_p = 1\} \to (0,\infty)$ is any bounded Lipschitz-continuous function in 
$(\tilde{\ell}^{\a}, \tilde d_{\a})$. By Lemma~\ref{prop:constant_term_rho} we have for all $p \ge \a$,
\begin{align*}
    &\frac{c(p,h)}{h+1}\E[f(\bfQ^{(p)}(h))] - c(p) \E[f(\bfQ^{(p)})]=\\
    &= \frac{1}{h+1} \sum_{k=0}^{h} \E \Big[\Big( \frac{\|\bfTh_{-k+[0,h]}\|^\alpha_p}{\|\bfTh_{-k+[0,h]}\|^\alpha_\alpha} - \frac{\|\bfTh\|^\alpha_p}{\|\bfTh\|^\alpha_\alpha}\Big) f(\bfTh_{-k+[0,h]}/\|\bfTh_{-k+[0,h]}\|_p)\Big] +\\
        &\quad \quad \quad   +\E \Big[ \frac{\|\bfTh\|^\alpha_p}{\|\bfTh\|^\alpha_\alpha} \Big(f(\bfTh_{-k+[0,h]}/\|\bfTh_{[-k+[0,h]}\|_p) - f(\bfTh/\|\bfTh\|_p) \Big)\Big]  \\
        &=: I + II\,.    
\end{align*}
We will prove that $I$ and $II$ vanish as $h\to\infty$. 
Since $p \ge \alpha$ subadditivity yields for $k\in [0,h]$,
\begin{align*}
 &\Big| \frac{\|\bfTh_{-k+[0,h]}\|^\alpha_p}{\|\bfTh_{-k+[0,h]}\|^\alpha_\alpha} - \frac{\|\bfTh\|^\alpha_p}{\|\bfTh\|^\alpha_\alpha}\Big|   =  
\frac{\big|\big\|\|\bfTh\|_\alpha\bfTh_{-k+[0,h]}\big\|^\alpha_p-  \big\|\|\bfTh_{-k+[0,h]}\|_\alpha\bfTh\big\|^\alpha_p\big|}{\|\bfTh_{-k+[0,h]}\|^\alpha_\alpha\|\bfTh\|^\alpha_\alpha}\\
    &\quad \quad \le  \frac{\big|\big\|\|\bfTh\|_\alpha\bfTh_{-k+[0,h]}\big\|^p_p-
 \big\|\|\bfTh_{
-k+[0,h]}\|_\alpha\bfTh\big\|^p_p\big|^{\alpha/p}}{\|\bfTh_{-k+[0,h]}\|^\alpha_\alpha\|\bfTh\|^\alpha_\alpha}\,.
\end{align*}
Moreover,
\beao\lefteqn{
  \big|\big\|\|\bfTh\|_\alpha\bfTh_{-k+[0,h]}\big\|^p_p- \big\|\|\bfTh_{-k+[0,h]}\|_\alpha\bfTh\big\|^p_p\big|}  \\
    &\le &  \|\bfTh_{-k+[0,h]}\|_\alpha^p\Big(\sum_{t=-\infty}^{-k-1}|\bfTh_t|^{p} +\sum_{t=-k+h+1}^{+\infty}|\bfTh|^p \Big) \\
        &&+ \big|\|\bfTh\|_{\alpha}-\|\bfTh_{-k+[0,h]}\|_\alpha\big|^p\,\sum_{t=-k}^{-k+h}|\bfTh_{t}|^p \,.
\eeao
Thus, $|I|$ is bounded from above  by
\beao
    \lefteqn{\frac{1}{h+1} \|f\|_\infty \sum_{k=0}^{h}\Big(\E\Big[ \frac{\|\bfTh_{[-\infty, -k-1]}\|_p^\alpha}{\|\bfTh\|_\alpha^\alpha}\Big] + \E\Big[\frac{\|\bfTh_{[-k+h+1,\infty]}\|_p^\alpha}{\|\bfTh\|_\alpha^\alpha} \Big]}\\&&  
         +\E\Big[ \frac{\|\bfTh_{[-\infty,-k-1]}\|_\alpha^\alpha + \|\bfTh_{[-k+h+1,+\infty]}\|_\alpha^\alpha }{\|\bfTh\|_\alpha^\alpha} \frac{\|\bfTh_{-k+[0,h]}\|^\alpha_p}{\|\bfTh_{-k+[0,h]}\|_\alpha^\alpha}\Big] \Big)\\
            &\le& \frac{1}{h+1} \|f\|_\infty\sum_{k=0}^{h}\Big(
            \E\Big[ \frac{\|\bfTh_{[-\infty, -(k+1)]}\|_p^\alpha +\|\bfTh_{[-\infty, -(k+1)]}\|_\alpha^\alpha }{\|\bfTh\|_\alpha^\alpha}\Big] \\
                &&+ \E\Big[\frac{\|\bfTh_{[k+1,+\infty]}\|_p^\alpha +\|\bfTh_{[k+1,+\infty]}\|_\alpha^\alpha }{\|\bfTh\|_\alpha^\alpha} \Big]\Big)\,.
\eeao
Taking the limit as $h\to\infty$, the C\`esaro limit on the \rhs\ converges to zero. 
\par
We use the Lipschitz-continuity of $f$ to obtain an upper bound of $|II|$:
\begin{align*}
    | II| &\le \frac{1}{h+1}  c\,\sum_{k=0}^{h}\E\Big[ \frac{\|\bfTh_{-k+[0,h]}\|^\alpha_p}{\|\bfTh_{-k+[0,h]}\|^\alpha_\alpha} \tilde{d}_\alpha\Big( \frac{\bfTh_{-k+[0,h]}}{\|\bfTh_{-k+[0,h]}\|_p} , \frac{\bfTh}{\|\bfTh\|_p}\Big) \Big]\,. 
\end{align*}
Similar arguments as for $ |I|\to 0$ show that $|II|\to 0$.
\end{proof}
\subsection{Proofs of the results of Section~\ref{sec:Applications}}\label{sec:proof:4}
\subsubsection{ Proof of Theorem~\ref{cor:consistency} }\label{prooof:propcons}
We start with a version of Theorem~\ref{cor:consistency} for deterministic thresholds $(x_b)$.
\begin{lemma}\label{prop:consistency}
Assume the conditions of Theorem~\ref{cor:consistency}.
Then for every $g\in \mathcal{G}_{+}(\tilde{\ell^p})$,
      \begin{equation}\label{eq:consistent_estimator}
         \frac{1}{k} \sum_{t=1}^{ m}  g(x_{b}^{-1} \bfB_t) \xrightarrow[]{\P}  \int_{0}^{\infty} \E\big[g(y \,\bfQ^{(p)})\big] \,d(-y^{-\alpha})\,,\qquad       n \to  \infty\,,
      \end{equation}
      holds for sequences $k_n\to \infty$ and
$m_n := [n/b_n]  \to  \infty$ as in $\conditionMX{p}$.
\end{lemma}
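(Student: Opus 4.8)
The plan is to prove Lemma~\ref{prop:consistency} first, since it handles deterministic thresholds, and then Theorem~\ref{cor:consistency} follows by replacing $x_{b_n}$ with the order statistic $\|\bfB\|_{p,(k)}$ via a standard argument. For the lemma, the key is a Laplace-transform (or rather, a functional of the empirical point process of normalized blocks) approach combined with the mixing condition $\conditionMX{p}$. Concretely, the strategy is to show that for every Lipschitz-continuous $f\in\mathcal{G}_{+}(\tilde{\ell^p})$,
\beao
\E\Big[\ex^{-\frac1k\sum_{t=1}^m f(x_b^{-1}\bfB_t)}\Big] \to \exp\Big(-\int_0^\infty\big(1-\ex^{-f(y\bfQ^{(p)})}\big)c(p)^{-1}\E[\cdots]\,\cdots\Big),
\eeao
but it is cleaner to go directly through the first-moment and variance calculation. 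First I would establish the ``one-block'' asymptotics: by Theorem~\ref{thm:main:theorem} applied with block length $b_n$ (so $\P(\|\bfB_1\|_p>x_b)\sim c(p)\,b_n\,\P(|\bfX_0|>x_{b_n})$ and the weak convergence \eqref{eq:ld:blocks} holds), for any $g\in\mathcal{G}_{+}(\tilde\ell^p)$ that is bounded and whose discontinuity set is null for the limit law,
\beao
\frac{\E[g(x_b^{-1}\bfB_1)]}{\P(\|\bfB_1\|_p>x_b)} \to \E[g(Y\bfQ^{(p)})] = \int_0^\infty \E[g(y\bfQ^{(p)})]\,d(-y^{-\alpha}),
\eeao
using that $g$ vanishes near the origin so only the event $\{\|\bfB_1\|_p>\varepsilon x_b\}$ contributes and the weak convergence can be applied after a truncation argument identical to the one in the proof of Proposition~\ref{Prop:large_deviations}. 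Since $k=k_n=[m_n\P(\|\bfB\|_p>x_b)]$, this gives $\frac{1}{k}\sum_{t=1}^m\E[g(x_b^{-1}\bfB_t)]=\frac{m}{k}\E[g(x_b^{-1}\bfB_1)]\to \int_0^\infty\E[g(y\bfQ^{(p)})]\,d(-y^{-\alpha})$, i.e. the expected value of the left-hand side of \eqref{eq:consistent_estimator} converges to the claimed limit.

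Next I would handle the fluctuations. Write $N_n(g):=\frac1k\sum_{t=1}^m g(x_b^{-1}\bfB_t)$. Having shown $\E[N_n(g)]$ converges, it suffices to prove $\var(N_n(g))\to 0$, and then Chebyshev gives the convergence in probability. For the variance, decompose
\beao
\var(N_n(g)) = \frac{1}{k^2}\sum_{t=1}^m\var(g(x_b^{-1}\bfB_t)) + \frac{1}{k^2}\sum_{s\ne t}\cov\big(g(x_b^{-1}\bfB_s),g(x_b^{-1}\bfB_t)\big).
\eeao
The diagonal term is $O(m\,\E[g^2(x_b^{-1}\bfB_1)]/k^2)=O(\|g\|_\infty\, m\,\P(\|\bfB_1\|_p>x_b)/k^2)=O(1/k)\to 0$. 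For the off-diagonal term one uses the mixing condition: under $\conditionMX{p}$ (which, as remarked, holds e.g. under strong mixing with mild rate), the covariances $\cov(g(x_b^{-1}\bfB_s),g(x_b^{-1}\bfB_t))$ for $|s-t|$ large are controlled by the mixing coefficients, and the ``small gap'' covariances are handled by the anti-clustering condition $\anticlust$ which prevents two distinct blocks from simultaneously carrying an extreme. Summing, the off-diagonal contribution is $o(k^2)/k^2=o(1)$. This is the step I expect to be the main obstacle: $\conditionMX{p}$ is phrased as a factorization of Laplace functionals rather than directly as a bound on covariances, so one must either (i) first prove a point-process convergence $\sum_t\varepsilon_{x_b^{-1}\bfB_t}\to$ Poisson via $\conditionMX{p}$ and then read off the law of large numbers for $N_n(g)$, or (ii) derive the needed covariance/variance bounds from $\conditionMX{p}$ by a blocking (big-block/small-block) argument exactly as in Lemma~6.2 of Basrak et al.~\cite{basrak:planinic:soulier:2018}. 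I would follow route (ii): split the $m$ blocks into $k$ super-blocks of length $\lfloor m/k\rfloor$; $\conditionMX{p}$ says the Laplace transform factorizes over super-blocks up to $o(1)$, and within each super-block the expected number of extreme blocks is $\asymp 1$, so a Poisson approximation for $\frac1k\sum_t g(x_b^{-1}\bfB_t)$ collapses (after dividing by $k$) to its mean by a weak law of large numbers for row-wise independent triangular arrays of bounded summands.

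Finally, to pass from Lemma~\ref{prop:consistency} to Theorem~\ref{cor:consistency}, I would first prove $\|\bfB\|_{p,(k)}/x_b\cip 1$. This is the standard fact that the $k$-th largest of $\|\bfB_1\|_p,\dots,\|\bfB_m\|_p$, with $k=[m\P(\|\bfB\|_p>x_b)]$, concentrates at $x_b$: apply \eqref{eq:consistent_estimator} with $g_\eta(\bfx):=\1(\|\bfx\|_p>1+\eta)$ and $g_\eta'(\bfx):=\1(\|\bfx\|_p>1-\eta)$ (both in $\mathcal{G}_+(\tilde\ell^p)$, with limits $(1\pm\eta)^{-\alpha}\ne 1$), which sandwiches $\frac1k\#\{t:\|\bfB_t\|_p>(1\pm\eta)x_b\}$ around values straddling $1$, forcing $\|\bfB\|_{p,(k)}\in((1-\eta)x_b,(1+\eta)x_b)$ with probability tending to $1$. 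Then, for $g\in\mathcal{G}_+(\tilde\ell^p)$, monotonicity-type bounds together with the continuity of the limit integral $\rho\mapsto\int_0^\infty\E[g(y\rho^{-1}\bfQ^{(p)})]d(-y^{-\alpha})$ at $\rho=1$ (which reduces to a change of variables $y\mapsto \rho y$ in the $\alpha$-homogeneous integral) let us replace $x_b$ by $\|\bfB\|_{p,(k)}$ in \eqref{eq:consistent_estimator}; a clean way is to write $g(\|\bfB\|_{p,(k)}^{-1}\bfB_t)$, use that on $\{(1-\eta)x_b<\|\bfB\|_{p,(k)}<(1+\eta)x_b\}$ one has $g((1+\eta)^{-1}x_b^{-1}\bfB_t)\le\cdots$ if $g$ were monotone, and in general approximate $g$ from above and below by Lipschitz functions that are radially monotone near the sphere, then apply Lemma~\ref{prop:consistency} and let $\eta\downarrow0$. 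This yields \eqref{eq:consistent_estimator2} and completes the proof.
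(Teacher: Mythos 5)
Your operative argument---your ``route (ii)'': use $\conditionMX{p}$ to factor the Laplace functional of $\frac1k\sum_{t=1}^m f(x_b^{-1}\bfB_t)$ into $k$ super-blocks of $\lfloor m/k\rfloor$ blocks, identify the per-super-block contribution through the one-block asymptotics $\frac{m}{k}\,\E[g(x_b^{-1}\bfB_1)]\to\int_0^\infty\E[g(y\,\bfQ^{(p)})]\,d(-y^{-\alpha})$ coming from Proposition~\ref{Prop:large_deviations} together with $k\sim m\,\P(\|\bfB\|_p>x_b)$, and conclude degenerate convergence---is essentially the paper's proof, which carries out exactly this via a first-moment bound and Taylor expansion of the factorized Laplace transform followed by the $M_0$-portmanteau theorem. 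Your opening mean-plus-Chebyshev plan should simply be dropped (as you yourself anticipate, $\conditionMX{p}$ is a Laplace-functional factorization and yields no covariance bounds for the original dependent blocks), but since the route you actually commit to coincides with the paper's, the proposal is correct.
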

\begin{proof}
If $\conditionMX{p}$ holds for Lipschitz-continuous $f \in \mathcal{G}_{+}(\tilde{\ell}^p)$, then it holds for functions $g \in \mathcal{G}_{+}(\tilde{\ell}^p)$ of the form $g(\bfx_t) = \1( \bfx_t \in A)$ where $A$ is a continuity-set of $\tilde\ell^p$ and $\bfzero \not \in \overline A$. It suffices to prove that
\beam\label{eq:mixproof}
\big(\E\big[ \ex^{ -  \frac{1}{k} \sum_{t=1}^{\lfloor m/k \rfloor} g(x_{b}^{-1}\bfB_t)}\big]\big)^{ k} &\to &
\ex^{ - \E\big[ \int_{0}^{\infty} g(y\,\bfQ^{(p)}) d(-y^{-\alpha})\big] }\,.
\eeam
By stationarity,
\beam\label{eq:june7b}
\E\big[ 1- \ex^{ - \frac{1}{k} \sum_{t=1}^{\lfloor m/k \rfloor} g(x_{b}^{-1}\bfB_t)}\big] = 
O\big(k^{-2}\,m\,\E[g(x_{b}^{-1}\bfB_1)]\big)\,.
\eeam
Since $g$ vanishes in some neighborhood of the origin there exists $c_g > 0$ such that $g(\bfx) = g(\bfx)\,\1(\|\bfx\|_p > c_g)$.
Therefore and by virtue of Proposition~\ref{Prop:large_deviations}
the \rhs\ of  \eqref{eq:june7b} vanishes as $\nto$. 
Now a Taylor expansion argument shows that the left-hand side of \eqref{eq:mixproof} is of the \asy\ order
$\sim\exp\{- (m/k) \E[g(x_{b}^{-1}\bfB_t)]\}$, and another application of 
 Proposition~\ref{Prop:large_deviations} yields \eqref{eq:mixproof}. We conclude by the 
portmanteau theorem for $M_0(\tilde{\ell}^p)$--convergence in Hult and Lindskog \cite{hult:lindskog:2006}, Theorem 2.4. that \eqref{eq:consistent_estimator} holds. 
\end{proof}

We continue with the proof of Theorem~\ref{cor:consistency}.
Lemma~\ref{prop:consistency} implies convergence of the 
empirical measures in   $M_0(\tilde{\ell}^p)$:
\beao
{P}_n(\cdot) := \frac{1}{k}\sum_{t=1}^m \1(x_{b}^{-1}\bfB_t \in \, \cdot \, )  \stp P(\cdot):= \int_{0}^\infty \P(y\bfQ^{(p)} \in \cdot) d(-y^{-\alpha})\,.
\eeao
Using the argument in Resnick \cite{resnick:2007}, p.~81, 
we may conclude $\|\bfB\|_{p,( k +1 )}/x_{b} \xrightarrow[]{\P}  1$, and thus the joint convergence in 
$(P_n,\,  \|\bfB\|_{p,(k+1  )}/x_{b}) \stp (P,1)$  in 
$M_0(\tilde{\ell^p}) \times \mathbb{R}_+$
follows. Now
\eqref{eq:consistent_estimator2} follows by an application of the 
continuous mapping theorem to the scaling function $ s(P(\cdot),t)=P(t\, \cdot)$. To prove continuity of $s$ we use again the 
portmanteau theorem for $M_0(\tilde{\ell}^p)$--convergence in Hult and Lindskog \cite{hult:lindskog:2006}, Theorem 2.4. Thus it suffices to check whether the 
limit $P_n\,f(\cdot/t)  \stp P\,f$ holds as $(n,t) \to (\infty,1)$ 
for Lipschitz-continuous $f\in \mathcal{G}_+(\tilde{\ell^p})$. But we have
with Lemma~\ref{prop:consistency}
\beao
 |P_n\,f(\cdot/t)  - P\,f| &\le& |P_n\,f(\cdot/t)  - P_n\,f| + |P_n\,f - P\,f| \\
 & =& |P_n\,f(\cdot/t)  - P_n\,f| + o_\P(1), \quad n \to  \infty.
 \eeao
Then, for all $ 0 < t_0 \le t < 2 $, for $t_0 \le 1$, { setting} $g(\bfx)=(\| \bfx \|_p \land {\|f\|_\infty}) \,  \1(\{\bfx : \| \bfx \|_p > c_{f}/t_0 \})$, we have
 \beao
|P_n\,f(\cdot/t)  - P\,f|   
    & \le& \big|t^{-1} - 1 \big|  \, P_n\, g + o_\P(1) \\
    & \le& \big|t^{-1} - 1 \big|  \, \big( c + o_\P(1) \big) + o_\P(1)\, ,
\eeao
for some $c > 0$, $c_f > 0$ as above. Letting $t \to 1$, continuity of $s$ follows.
\qed
\subsubsection{Proof of Proposition \ref{prop:stable:limit}}\label{proof:stable}
The result follows by a direct application of Theorem 3.1 in 
Bartkiewicz {\it et al.} \cite{bartkiewicz:jakubowski:mikosch:wintenberger:2011} on 
$\bfu^\top\bfS_n$ for every $\bfu\in\R^d$ such that $|\bfu|=1$ by checking their conditions {\bf (AC)}, {\bf (TB)}.  
Condition \eqref{eq:cond:small:sets:stable} implies that for all $\delta>0$, 
\beao
\lim_{l\to\infty}\limsup_{\nto} n\,\sum_{t=l}^{b_n} \P(|\bfX_t|>\delta\,a_n\,,|\bfX_0|>\delta\,a_n)\,,
\eeao
from which {\bf (AC)} is immediate. This condition also implies 
{\bf (TB)}. We show this in two steps. First, we identify 
the coefficients $b(v)$ in {\bf (TB)} in terms of the 
spectral tail process. Mikosch and Wintenberger \cite{mikosch:wintenberger:2013} showed that
\beao
b_\pm(v) - b_{\pm}(v-1) =\E\Big[\Big(\sum_{j=0}^v \bfu^\top\bfTh_j\Big)^\alpha_\pm\Big]- 
\E\Big[\Big(\sum_{j=1}^v \bfu^\top\bfTh_j\Big)^\alpha_\pm\Big]\,,
\eeao
where we suppress in the notation the dependence of the \lhs\ on $\bfu$ in what follows. 
{\bf (TB)} amounts to verifying that 
$b_\pm(v)-b_\pm(v-1)$ converges as $v\to\infty$. 
For $\a \in (0,1)$ this follows by concavity 
since $\|\bfTh\|_\a<\infty$ a.s. For $1<\alpha<2$ this will 
follow by a convexity argument if
$\E[(\sum_{j\ge 0}|\bfTh_j|)^{\a-1}]<\infty$. By subadditivity and Jensen's inequality, it is enough to check 
\beam\label{eq:bound}
\sum_{j = 0}^\infty \big(\E[|\bfTh_j|^{\alpha-1}\1(|\bfTh_j| > 1)] +  \E[|\bfTh_j|\land 1]\big) < + \infty\,.
\eeam 
We start by showing 
\beam\label{eq:prsumstep1}
\sum_{j=0}^\infty\E\big[|\bfTh_{j}| \wedge  1 \big]<\infty\,.
\eeam
Condition \eqref{eq:cond:small:sets:stable} implies
\beao
\lim_{l\to\infty} \limsup_{\nto}n\,\sum_{j=l}^{b_n} \E\big[(|a_n^{-1}\bfX_{j} |\wedge  1)\,\1(|  \bfX_{0}|>
a_n)\big]=0\,,
\eeao
which yields the following 
Cauchy criterion: for every $\vep>0$ there exists $K$ sufficiently large \st\
for $l\ge K, h\ge 0$,
\beao
\limsup_{\nto}n\,\sum_{j=l}^{l+h}  \E\big[(|a_n^{-1}\bfX_{j} |\wedge  1)\,\1(|  \bfX_{0}|>
a_n)\big] =\sum_{j=l}^{l+h}
 \E\big[|Y\,\bfTh_{j}|\wedge  1 \big]
\le \vep\, ,
\eeao
where we used \regvar\ of $(\bfX_t)$ in the last step. Then, we conclude \eqref{eq:prsumstep1} holds. By stationarity we can show similarly
\beam\label{eq:sumtruncnegative}
 \sum_{j=0}^{\infty}
    \E[|\bfTh_{-j}|\land 1] < + \infty. 
\eeam 
Then, by the time-change formula in \eqref{eq:june5a} we deduce 
\beao
    \infty &>&\sum_{j=0}^{\infty} \E[|\bfTh_{-j}|\land 1] 
    = \sum_{j=0}^{\infty} \E[|\bfTh_j|^\alpha\,( |\bfTh_{j}|^{-1}\land 1)] \\
    &=& \sum_{j=0}^{\infty} \E[ |\bfTh_{j}|^{\alpha-1}\land |\bfTh_j|^\alpha]>
    \sum_{j=0}^{\infty} \E[ |\bfTh_{j}|^{\alpha-1}\1(|\bfTh_j| > 1)],
\eeao 
and \eqref{eq:bound} holds.This finishes the proof of the fact that 
$\E[(\sum_{j \ge 0}|\bfTh_t|)^{\a -1}] < + \infty$, in particular $c(1)<\infty$. Applying the mean value theorem and dominated convergence we arrive at the relation
\beao
b_\pm(v)-b_\pm(v-1)& \to& \E\Big[\Big(\sum_{j=0}^\infty \bfu^\top\bfTh_j\Big)^\alpha_\pm- \Big(\sum_{j=1}^\infty \bfu^\top\bfTh_j\Big)^\alpha_\pm\Big]\,,
\qquad v\to \infty\,.
\eeao
Reasoning for the limit as for \eqref{eq:dec11b} and recalling that $c(1)<\infty$, we identify
\beao\lefteqn{
\E\Big[\Big(\sum_{j=0}^\infty \bfu^\top\bfTh_j\Big)^\alpha_\pm- \Big(\sum_{j=1}^\infty \bfu^\top\bfTh_j\Big)^\alpha_\pm\Big]}\\&=&\E\Big[\Big(\sum_{j=-\infty}^\infty \bfu^\top\bfTh_j\Big)^\alpha_\pm\big/\|\bfTh\|_\a^\alpha\Big]
=\E\Big[\Big(\sum_{j=-\infty}^\infty \bfu^\top\bfQ_j^{(\alpha)}\Big)^\alpha_\pm\Big]\\
&=&c(1)\E\Big[\Big(\sum_{j=-\infty}^\infty \bfu^\top\bfQ_j^{(1)}\Big)^\alpha_\pm\Big]\,.
\eeao

\subsection{Proofs of the results of Section \ref{sec:beyond}}\label{proof:rest}\label{sec:proof:sec7}
\subsubsection{Proof of Proposition~\ref{ex2}}\label{subsub:ex2}

Notice that $\psi_g$ is bounded and measurable. 
For $p = \alpha$ we have $\bfQ^{(\alpha)} \eqd \bfTh/\|\bfTh\|_\alpha$. Then the result follows from Proposition 3.6 in Janssen \cite{janssen:2019}. For $p>0$, assuming the spectral cluster process $\bfQ^{(p)}$
is well defined we have $\|\bfTh\|_p < \infty$ a.s. and $c(p) < \infty$. Then, we introduce the Radon-Nikodym derivative of $\mathcal{L}( \bfQ^{(p)})$ with respect to $\mathcal{L}(\bfTh/\|\bfTh\|_p)$ which by \eqref{eq:def:cluster:process1} is the function $h: \ell^p \cap \{\bfx : \|\bfx\|_p = 1\} \to \mathbb{R}_{\ge 0}$ defined by $h(\bfy/\|\bfy\|_p) := \|\bfy\|_\alpha/\|\bfy\|_p$. Finally, the result follows by another application of Proposition 3.6 in Janssen  \cite{janssen:2019}. \qed

\subsubsection{Proof of Theorem \ref{thm:random_sampling}}\label{subsub:thm:rs}
The proof is given for $p = \alpha$ only; the case $p\le \a$
extends in a natural way. 
Let $g:\ell^\alpha \to \mathbb{R}$ be a continuous bounded function. We start by proving that $\psi_g$ defined in \eqref{eq:psif} 
is a continuous bounded function on $\tilde{\ell^\alpha}$. Fix $\epsilon > 0$ and $[\bfz] \in \tilde{\ell}^\alpha \cap \{  [\bfy] : \|\bfy\|_\alpha = 1\}$. Then for all $[\bfx] \in \tilde{\ell}^\alpha\cap \{[\bfy] : \|\bfy\|_\alpha = 1\}$, $k \in \mathbb{Z}$ and $N \in \mathbb{N}$, we have
    \beao
 &&   |\psi_g(\bfx) - \psi_g(\bfz)| =
        \Big| \sum_{j \in \mathbb{Z}} |\bfx_{j}^*|^\alpha { g ((\bfx_{j+t}^*)_t)}  - \sum_{j \in \mathbb{Z}} |\bfz_{j}^*|^\alpha g ((\bfz_{j+t}^*)_{t}) \Big| \\
          &=&\Big| \sum_{j \in \mathbb{Z}} |\bfx_{j}^* -\bfz_{j-k}^*|^\alpha  g ((\bfx_{j+t}^*)_t)  
             - \sum_{j \in \mathbb{Z}} |\bfz_{j}^*|^\alpha \big( g ( (
           \bfz_{j+t}^*)_t) - g  ((\bfx_{j+t+k}^*)_t) \big) \Big|\\ 
         & \le&  {   \|g\|_\infty} d_\alpha^{ \a}(B^{-k}\bfz^* , \bfx^*) 
            + { 2\|g\|_\infty}\, d_\alpha^{ \a}( \bfz^*, \bfz_{[-N,N]}^*)\,    \\
            & &+ \sum_{|j| < N}{ |\bfz^*_j|^\alpha}{ \big|g((\bfz_{j+t}^*)_t) - g((\bfx_{j+t+k}^*)_t)\big|}\,.
         \eeao
         If $[\bfx]$ satisfies $\tilde{d}_\alpha^{ \a}(\bfz, \bfx) < \epsilon(3\|g\|_\infty)^{-1}$ then there exists $k_0 \in \mathbb{Z}$ such that 
     \[\tilde{d}_\alpha^{ \a}(\bfz, \bfx) < d_\alpha^{ \a}( B^{-k_0}\bfz^*,\bfx^*) < \epsilon (3 \|g\|_\infty)^{-1}.\] 
     Furthermore, choose $N_0 \ge 0$ such that  
$d_\alpha^{ \a}(\bfz^*,\bfz_{[-N_0,N_0]}^*) < \epsilon (2\times 3\|g\|_\infty)^{-1}$ 
and consider the finite set $C_{[\bfz]} \subset \ell^\alpha\cap\{ \bfy : \|\bfy\|_\alpha=1\} $, defined by $C_{[\bfz]}:= \{(\bfz^*_{j+t})_t \in  \ell^\alpha: |j| < N_0, |\bfz^*_j| > 0\}$. Notice that for every $\tilde{\bfz} \in C_{[\bfz]}$ 
there exists $\delta(\tilde{\bfz})$ such that if $d_\alpha^{\a}( \tilde{\bfz}, \bfx) < \delta(\tilde{\bfz})$ implies $|g(\tilde{\bfz}) - g(\bfx)| < \epsilon/3 $. Finally, define $\eta(\bfz) := \min\{ \delta( \tilde{\bfz}) : \tilde{\bfz} \in C\} \land \epsilon(3\|g\|_\infty)^{-1}$. {Then, noticing that $\sum_{|j| < N_0}|\bfz^*_j|^\alpha \le \|\bfz\|_\alpha^\alpha = 1$, we also obtain a bound for the last term}. Hence, for every $[\bfx] \in \tilde{\ell^\alpha}$ satisfying $d_\alpha^{ \a}(\bfz, \bfx) <  \eta(\bfz)$ we have~$|\psi_g(\bfx) - \psi_g(\bfz)| < \epsilon$. 
         \par
This finishes the proof of the continuity of the 
function $\psi_g$ on  $\tilde{\ell^\alpha} \cap \{ \bfy : \|\bfy\|_\alpha = 1\}$. We conclude with applications of Lemma~\ref{prop:consistency} and Proposition~\ref{ex2}. \qed
{
\subsubsection{Proof of Proposition \ref{prop:LGsup}}\label{subsub:LGsup}
Theorem 4.5 in Mikosch and Wintenberger \cite{mikosch:wintenberger:2016}
yields immediately
\beam\label{eq:june15ccc}
    \Big| \frac{\P(\sup_{1 \le t \le n}S_t > x_n)}{n\P(|X_1| > x_n)} - \E\Big[ \big(\sup_{t \ge 0} \sum_{i=0}^t \Theta_i\big)^{\alpha}_{+} - \big(\sup_{t \ge 1} \sum_{i=1}^t \Theta_i\big)^{\alpha}_{+}\Big] \Big| \to 0 ,\quad n \to  \infty\,,\nonumber\\
\eeam
and $n\,\P(|X_1| > x_n) \to 0$.  We multiply the function inside the limiting 
expected value by the constant 
$1=\|\Theta\|^\alpha_\alpha/\|\Theta\|^\alpha_\alpha$. Moreover, since
$c(1) < \infty$, then
$\E [ (\sum_{t=1}^\infty |\Theta_t|)^{\alpha-1} ] < \infty$; see Lemma 3.11 in Planini\'c and Soulier \cite{planinic:soulier:2018}. Then, by Fubini's theorem,
\beao 
\lefteqn{\E\Big[ \big(\sup_{t \ge 0} \sum_{i=0}^t \Theta_i\big)^{\alpha}_{+} - \big(\sup_{t \ge 1} \sum_{i=1}^t \Theta_i\big)^{\alpha}_{+}\Big]}\\
&=& \sum_{j \in \mathbb{Z}}\E\Big[ |\Theta_j|^\alpha \, \Big( \big(\sup_{t \ge 0} \sum_{i=0}^t \frac{\Theta_i}{\|\Theta\|_\alpha}\big)^{\alpha}_{+} - \big(\sup_{t \ge 1} \sum_{i=1}^t \frac{\Theta_i}{\|\Theta\|_\alpha}\big)^{\alpha}_{+} \Big)\Big]\,.
\eeao
At this point we apply the time-change formula for positive measurable functions of $\bfTh$ at every term of the sum in $j \in \mathbb{Z}$; see Corollary 2.8. 
 in Dombry {\it et al.} \cite{dombry:hashorva:soulier:2018}.  
By the same argument as in the proof of Proposition \ref{Prop:large_deviations} we obtain  the representation of the expectation in \eqref{eq:june15ccc} in terms of the univariate spectral cluster process $Q^{(p)} $.
\par 
Now we apply Theorem~\ref{thm:random_sampling} to 
$f(\bfx):= \lim_{k \to \infty}( \sup_{t \ge- k} \sum_{i=-k}^{t} x_i)^\alpha_{+}$ on $\ell^1$. It is uniformly continuous and bounded by one on the sphere of $\ell^p$, hence 
\eqref{eq:estimate:f} holds for $f$. Similarly, the constant $c(1)^{-1}$ can be estimated by employing the \fct\ $g(\bfx):= \|\bfx\|_\alpha$ on $\ell^1$ 
which is bounded by one on the unitary $\ell^1$-sphere for $\alpha \ge 1$.
\qed}
\subsubsection{Proof of Proposition~\ref{cor:infthe} }\label{subsub:cc}
The re-normalization function $\zeta$ is continuous on the unit 
sphere of $(\ell^\alpha,d_\a)$, except for sequences with $\bfx_0=0$. Then
 \beao
     \P( \rho(Y \,\bfTh) > 1)  
        & =& \E\big[ \rho(\bfTh_t)^\alpha \land 1 \big] 
             = \E\Big[ \rho ( \bfQ^{(\alpha)}_t/|\bfQ^{(\alpha)}_0|)^\alpha \land 1 \Big] \\
            &  =& \E\Big[ (\rho^\a \land 1) \circ \zeta ( \bfQ^{(\a)}) \, \Big]. 
 \eeao
The proof is finished by an application of  Theorem \ref{thm:random_sampling}.
\qed

\end{document}